\declaretheorem[
name=Theorem,
Refname={Theorem,Theorems},
numberwithin=section
]{theorem}
\declaretheorem[
name=Proposition,
Refname={Proposition,Propositions},
sibling=theorem
]{proposition}
\declaretheorem[
name=Lemma,
Refname={Lemma,Lemmas},
sibling=theorem
]{lemma}
\declaretheorem[
name=Corollary,
Refname={Corollary,Corollaries},
sibling=theorem
]{corollary}
\declaretheorem[
name=Claim,
unnumbered
]{claim*}
\declaretheorem[
name=Definition,
style=definition,
Refname={Definition,Definitions},
sibling=theorem
]{definition}
\declaretheorem[
name=Remark,
style=remark,
Refname={Remark,Remarks},
sibling=theorem
]{remark}
\declaretheorem[
name=Remark,
style=remark,
unnumbered
]{remark*}
\Crefname{theorem}{Theorem}{Theorems}
\Crefname{lemma}{Lemma}{Lemmas}
\newcommand{\IN}{{\mathbb{N}}}
\newcommand{\IR}{{\mathbb{R}}}
\newcommand{\vertiii}[1]{{\left\vert\kern-0.25ex\left\vert\kern-0.25ex\left\vert #1 
		\right\vert\kern-0.25ex\right\vert\kern-0.25ex\right\vert}}
\newcommand{\abs}[1]{{\left|{#1}\right|}}
\newcommand{\norm}[1]{{\|#1\|}}
\newcommand{\Norm}[1]{{\vert\kern-0.25ex\vert\kern-0.25ex\vert #1 
	\vert\kern-0.25ex\vert\kern-0.25ex\vert}}
\newcommand{\inprod}[2]{{\left\langle#1,#2\right\rangle}}
\newcommand{\pinprod}[2]{{\left(#1,#2\right)}}
\newcommand{\md}{\mathrm{d}}
\newcommand{\loc}{{\mathrm{loc}}}
\DeclareMathOperator*{\esssup}{ess\,sup}
\DeclareMathOperator*{\essinf}{ess\,inf}
\DeclareMathOperator*{\essosc}{ess\,osc}
\newcommand{\emptyarg}{{\,\cdot\,}}
\newcommand{\solA}{u}	\newcommand{\solAvar}{{\tilde{\solA}}}
\newcommand{\solB}{v}	
\newcommand{\solC}{w}
\newcommand{\testA}{\eta}
\newcommand{\nonlin}{\phi}
\newcommand{\source}{f}
\newcommand{\LipBoundSource}{L}
\newcommand{\Qrho}{{Q^{\nu_0}_\omega(R)}}
\title{Interior H\"older continuity for singular-degenerate porous medium type equations with an application to a biofilm model}
\author{Hissink Muller, Victor\\
Radboud Universiteit, IMAPP - Mathematics\\
PO Box 9010, 6500 GL Nijmegen,
The Netherlands\\
\texttt{V.HissinkMuller@math.ru.nl}}
\date{ }
\begin{document}
	\maketitle
	
\begin{abstract}
	We show interior H\"older continuity for a class of quasi-linear degenerate reaction-diffusion equations.
	The diffusion coefficient in the equation has a porous medium type degeneracy and its primitive has a singularity.
	The reaction term is locally bounded except in zero.
	The class of equations we analyse is motivated by a model that describes the growth of biofilms.
	Our method is based on the original proof of interior H\"older continuity for the porous medium equation.
	We do not restrict ourselves to solutions that are limits in the weak topology of a sequence of approximate continuous solutions of regularized problems, which is a common assumption.
\end{abstract}

\textbf{Keywords:} Quasilinear parabolic equations; Degenerate and singular diffusion, Regularity, Interior H\"older continuity, Biofilm

\textbf{MSC:} 35K57, 35K59, 35B65, 35K65, 35K67, 

\section*{Introduction}
We study the regularity of local solutions of equations of the form
\begin{equation}\label{eq:SD-PME}
	\solA_t=\Delta \nonlin(\solA)+\source(\emptyarg,\solA)\quad\text{in }\Omega\times(0,T],
\end{equation}
where $\Omega\subseteq\IR^N$ is open, $0<T\leq\infty$ and $\solA$ takes values in $[0,1)$.
The key assumptions on $\nonlin$ are that it possesses a singularity $\nonlin(1)=\infty$ and a degeneracy $\nonlin'(0)=0$.
The degeneracy in $0$ is of the same type as observed in the porous medium equation $\solA_t=\Delta\solA^m$.
Therefore, we call \eqref{eq:SD-PME} a \emph{singular-degenerate equation of porous medium type}.
Our main result is that solutions of \eqref{eq:SD-PME} that are bounded away from $1$ are H\"older continuous in the interior of $\Omega\times(0,T]$.

Our motivation for the study of \eqref{eq:SD-PME} is the biofilm growth model introduced and numerically studied in \cite{Eberl2000} and rigorously analysed in \cite{Efendiev2009}.
Biofilms are communities of microorganisms in a moist environment in which cells stick to each other and often to a surface as well.
These cells are embedded in a slimy matrix of extracellular polymeric substances produced by the cells within the biofilm.
On the mesoscale, fully grown biofilms form complex heterogeneous shaped structures that the model in \cite{Eberl2000} is capable of predicting.
The model consists of two reaction-diffusion equations that are coupled via the reaction terms.
The first equation describes the biomass density $M$.
It is a quasilinear equation with a diffusion coefficient that vanishes whenever the biomass density is zero and blows up as the biomass density approaches its maximal value.
The second equation is a classical semilinear equation describing the growth limiting nutrient concentration $C$.
The variables are normalized; $C$ is scaled with respect to the bulk concentration and $M$ with respect to the maximum biomass density. 
The biofilm growth model is given by the system	
\begin{equation}\label{eq:Biofilm}
	\left\{\begin{aligned}
		\partial_t M&=d_2\nabla\cdot\left(D(M)\nabla M\right)-K_2M+K_3\frac{CM}{K_4+C}&\\
		\partial_t C&=d_1\Delta C-K_1\frac{CM}{K_4+C}& 
	\end{aligned}\right.
	\quad \text{in}\ \Omega\times(0,T],
\end{equation}
where the biomass-dependent diffusion coefficient $D$ is given by 
\begin{align*}
	D(M)=\frac{M^b}{(1-M)^a}, \qquad a\geq 1, \ b>0.
\end{align*}
The constants $d_1$, $d_2$ and $K_4$ are strictly positive and $K_1$, $K_2$ and $K_3$ are non-negative.

The biomass diffusion coefficient $D$ has a degeneracy in $0$ known from the porous medium equation.
Moreover, $D$ becomes singular as $M$ approaches $1$ so that spatial spreading becomes very large whenever $M$ is close to $1$. This ensures, heuristically, that the biomass density remains bounded by its maximum value.
As a consequence, we do not need any boundedness assumption for the reaction terms.
Finally, observe that the equation for the biofilm density is included in the class of equations we study.
Indeed, by setting
\begin{equation}\label{eq:nonlin.Biofilm}
	\nonlin(\solA)=\int_0^\solA\frac{z^b}{(1-z)^a}\md z
\end{equation}
we recognize that the first equation of \eqref{eq:Biofilm} is a particular case of \eqref{eq:SD-PME}.

In \eqref{eq:Biofilm} the actual biofilm is the subregion of $\Omega$ where $M$ is positive, that is,
\[
\Omega_{M}(t)=\{x\in\Omega\mid M(x,t)>0\}.
\]
This region and its boundary are well-defined provided that the function $M$ is continuous.
Therefore, the study of continuity of solutions is of fundamental importance for the viability of the biofilm model.
Moreover, H\"older regularity of solutions ensures the convergence of certain more efficient numerical schemes as well.

The main result of this paper is that any weak solution $\solA$ of \eqref{eq:SD-PME} is H\"older continuous in the interior of $\Omega\times(0,T]$, provided that $\solA$ is bounded away from $1$.
This builds upon our earlier work \cite{HissMull-Sonn22}, where we showed the well-posedness of \eqref{eq:SD-PME} and \eqref{eq:Biofilm} on bounded Lipschitz domains for initial and boundary value problems with mixed boundary conditions.
Indeed, we infer from our main result that the solutions of \eqref{eq:SD-PME} obtained in \cite{HissMull-Sonn22} are H\"older continuous in the interior of the domain.
The same holds for the first equation of \eqref{eq:Biofilm} implying interior H\"older continuity for $M$. 
Moreover, the second equation of \eqref{eq:Biofilm} is non-degenerate so classical results on interior H\"older continuity can be applied to obtain interior H\"older continuity for $C$, see for example \cite{lady1968}.

\Cref{eq:SD-PME} falls within a larger class of equations for which a weaker regularity result is available.
By setting $\beta=\nonlin^{-1}$ we see that \eqref{eq:SD-PME} is an example of a Stefan problem, which is an equation of the form
\begin{equation} \label{eq:Stefan}
	\partial_t\beta(\solB)=\Delta\solB + f(\emptyarg,\beta(\solB))
\end{equation}
for some non-decreasing $\beta:\IR\to\IR$.
Bounded classical solutions of \eqref{eq:Stefan} have a modulus of continuity in the interior of $\Omega_T$ depending only on $\norm{\solB}_{\infty}$ and the structure of the equation, see \cite{Sacks83}.
This modulus of continuity carries over to bounded weak solutions provided that they can be approximated by classical solutions.

The techniques we use to study \eqref{eq:SD-PME} can also be applied to the corresponding Stefan problem \eqref{eq:Stefan} to conclude interior H\"older continuity for $\solB=\nonlin(\solA)$.
Then, H\"older continuity of $\solA$ can be inferred provided that $\beta:=\nonlin^{-1}$ is H\"older continuous.
This was the original method applied to the porous medium equation, see \cite{DiBen1984}.
In fact, this is the natural approach, since a typical property of Stefan problems is that their solutions $\solB$ enjoy better continuity properties than $\solA=\beta(\solB)$, see \cite{Sacks83}.
Therefore, if H\"older continuity for $\nonlin(\solA)$ cannot be deduced, then it is unlikely that H\"older continuity for $\solA$ can be obtained.
This can be made rigorous if $\nonlin$ and $\beta:=\nonlin^{-1}$ are both H\"older continuous. 
However, in our case $\nonlin$ is not H\"older continuous due to the singularity $\nonlin(1)=\infty$.
We remark that we chose to study the original equation \eqref{eq:SD-PME}, since it describes the quantity of interest in applications.

We show interior H\"older continuity for solutions $\solA$ of \eqref{eq:SD-PME} that are bounded away from $1$ and this requirement is to be expected in view of the interpretation of \eqref{eq:SD-PME} as a Stefan problem of the form \eqref{eq:Stefan}.
Indeed, observe that this requirement for $\solA$ is equivalent to stating that the solution $\solB=\nonlin(\solA)$ is bounded.
The latter condition is needed to obtain a modulus of continuity for $\solB$, see \cite{Sacks83}, which is a weaker property than H\"older continuity.
Moreover, the boundedness of $\solB$ assumed in \cite{Sacks83} is a necessary condition; it can be deduced from assuming a modulus of continuity on $\solB$.
Returning to the original equation \eqref{eq:SD-PME}, we note that the condition on $\solA$ is not restrictive either; in \cite{HissMull-Sonn22} we have shown that solutions are bounded away from $1$ provided that the initial data and Dirichlet boundary conditions satisfy this assumption, which is a standard assumption in the typical problems considered in applications.

Interior H\"older continuity of solutions of non-degenerate parabolic equations is well-understood.
Moser adapted the techniques of DeGiorgi for uniformly elliptic linear equations for uniformly parabolic linear equations by studying the oscillation of solutions in a family of shrinking space-time cylinders reflecting the scaling invariance of the equation, see \cite{Mos60,Mos64}.
These methods where subsequently generalized by Lady{\ifmmode\check{z}\else\v{z}\fi}enskaja, Solonnikov and Ural'ceva for quasi-linear non-degenerate parabolic equations, see \cite{lady1968}.

Our study of regularity of solutions of \eqref{eq:SD-PME} uses so-called \emph{intrinsic scaling} techniques.
It was originally developed by DiBenedetto and Friedman in order to prove interior H\"older continuity for the $p$-Laplace equation and the porous medium equation, see \cite{DiBen1984}.
These techniques are widely used, see for example \cite{DiBenedetto1993,DiBenedetto2012,Urb08} and the references therein.
The key idea of intrinsic scaling is to change the scaling of the aforementioned space-time cylinders in a manner that depends on the solution itself.
Although some technical details are needed, the method allows us to carry over the techniques for non-degenerate parabolic equations to degenerate equations.

In the study of the regularity of solutions of porous medium type equations it is often assumed that solutions can be constructed as weak limits of sufficiently smooth solutions to a regularized problem, see for example \cite{DiBenedetto1983,DiBenedetto-Vespri1995}, or it is assumed that the solution is a classical solution, see \cite{Sacks83}.
This is done to justify some of the computations and it is always stressed that the obtained modulus of continuity solely relies on the data and is independent of the approximation.
This assumption is not considered to be restrictive in view of typical existence proofs and is therefore often omitted, for instance, see \cite{DiBen1984,Urb08}.

On the other hand, an important class of solutions is constructed by a finite-time discretization and Galerkin approximation scheme, see \cite{Alt-Luck1983}, which is needed when dealing with mixed Neumann-Dirichlet boundary conditions on Lipschitz domains.
The well-posedness proof in \cite{HissMull-Sonn22} is based on these methods.
For this class of solutions it is not immediately clear that the approximation assumption holds, since it does not follow from the existence proof presented in \cite{Alt-Luck1983} in a natural manner.
It can be shown that there exists a sequence of approximate continuous solutions, which would be sufficient for our purposes.
Nevertheless, this approach relies on the availability of the well-posedness of an appropriate initial- / boundary value problem rather than invoking only the definition of local solutions.

In this paper, we opt for a self-contained approach and ask for slightly more time regularity in our solution concept than usual in the literature on regularity theory, see \cite{DiBenedetto1993,DiBenedetto2012,lady1968,Liao19,Urb08}.
We assume that locally the time derivative of a solution is given as a bounded linear functional on the space of test functions.
It allows us to prove a chain rule for the term involving the time derivative used to obtain the necessary estimates.
In particular, we do not assume that solutions are the limit of sufficiently regular solutions of appropriate approximate problems.
Our assumption is not restrictive and is in line with the solution concept used in \cite{Alt-Luck1983} and \cite{HissMull-Sonn22}.

Interestingly, the assumption that solutions are weak limits of more regular solutions is not needed when studying the $p$-Laplace equation $u_t=\mathrm{div}(\abs{\nabla\solA}^{p-2}\nabla\solA)$.
Indeed, the natural functional space used in the solution concept is $L^p(0,T;W^{1,p}(\Omega))$.
For this space smoothing techniques such as Steklov averaging apply in a straightforward manner, because the gradient of the solution exists.
This does not hold for porous medium type equations, since only the existence of the gradient of $\nonlin(\solA)$ is assumed.

On the reaction term in \eqref{eq:SD-PME} we impose that it is locally bounded with respect to $\solA \in (0,\infty)$ and allow for a singularity in zero. 
In particular, we ask that $\norm{\source(\emptyarg,z)}_\infty \leq \LipBoundSource z^{-m_0}$, $\LipBoundSource\geq 0$, for all $z\in[0,1)$ for a certain $m_0>0$ depending on the structure of the degeneracy of $\nonlin$ in zero.
Motivated by the application to the biofilm growth model and to simplify the analysis, we had first imposed that the reaction term is bounded on the interval $[0,1)$.
We were able to show interior H\"older continuity under this assumption.
Later, we observed that our proof allowed for even more general reaction terms that might be singular in $0$, so we include those in our result.
It differs from standard assumptions in the literature on regularity theory, see \cite{DiBen1984,Sacks83,Urb08,DiBenedetto2012}.
We can show that the more general solution concept used in \cite{DiBenedetto2012} has a time derivative that can be interpreted as a bounded linear functional and therefore, is included in our solution concept, provided that $\source(\emptyarg,\solA)\in L^2_\loc(\Omega\times (0,T))$.
For details, see Proposition 3.19 in \cite{HissMull-Sonn22}.

Our proof follows closely the original proof of interior H\"older continuity for the model case $\solA_t=\Delta\solA^m$ in \cite{DiBen1984}.
Still, we do need to modify the proof to accommodate the reaction term.
For instance, we replace the typical `oscillation is large' estimate $\omega^{m-1}\geq R^\varepsilon$ by $\omega^m\geq R$, see \eqref{eq:oscillation.Really.Large} below.
Consequently, we also change some of the parameters in the subsequent iterative scheme.
Again, note that we do not rewrite the equation as a Stefan problem and prove H\"older continuity of $\solB=\nonlin(\solA)$, as was done in \cite{DiBen1984}.
Instead, we work with \Cref{eq:SD-PME} itself.
This leads to some difficulty in obtaining local energy estimates near the degeneracy in $\solA=0$.
We solve this issue in the same manner as was done in \cite{Urb08}, Chapter 6.

There is a different approach to obtain H\"older continuity for degenerate equations using \emph{expansion of positivity}, see  \cite{Liao19}.
The main feature of this method is that neither the logarithmic integral estimate nor the analysis of two alternatives in the key steps of the original proof are needed.
The expansion of positivity is a fundamental ingredient in proving a Harnack inequality, see \cite{DiBenedetto2012}.
We refrain from using these techniques so that our proof stays self-contained.
Furthermore, it is also not clear whether this approach allows for the reaction term we are considering.

More recently, intrinsic scaling has been applied successfully to doubly nonlinear parabolic equations whose prototype is $\partial_t (\abs{\solA}^{p-2}\solA) = \mathrm{div}(\abs{\nabla \solA}^{p-2} \solA)$, $p>1$, see \cite{BOG21,BOG21Part2,Liao22}.
This equation is a combination of the porous medium equation and the $p$-Laplace equation.
The expansion of positivity plays a fundamental role in the proofs.

The outline of this paper is as follows.
In \Cref{sect:MainHypothesis.and.THM} the main hypotheses, the solution concept and the main result on interior H\"older continuity are stated.
\Cref{sect:Geometry.Holder} introduces some further notation and a geometric setting in which the corner stone of the proof of the main result is given, the so-called \emph{De Giorgi-type Lemma}.
Based on this lemma, we prove H\"older continuity of solutions via an iterative scheme of intrinsically scaled shrinking space-time cylinders.
The next two sections are dedicated to proving the De Giorgi-type Lemma, which is where the main technicalities lie.
In particular, \Cref{sect:Int.est.and.Aux} covers interior integral estimates and some auxiliary technical statements and \Cref{sect:Proof.of.DeGiorgiLemma} contains the actual proof.

\section{Assumptions and main result}\label{sect:MainHypothesis.and.THM}
Let $\Omega\subseteq \IR^N$ be open, $0<T\leq\infty$ and write $\Omega_T:=\Omega\times(0,T]$.
The function $\nonlin:[0,1)\to[0,\infty)$ satisfies the structural assumptions:
\newcounter{counterHypotheses}
\begin{enumerate}[label=\upshape(H\arabic*),ref=\upshape H\arabic*]
	\item $\nonlin$ is continuous and strictly increasing;  \label{itm:nonlin.basic.assumption}
	\item $\nonlin$ is surjective; \label{itm:nonlin.blow-up.assumption}
	\item $\nonlin\in C^1([0,1))$, $\nonlin'>0$ in $(0,1)$ and a \emph{porous-medium type degeneracy} holds, that is,  $\nonlin'$ satisfies
	\[
	c_1z^{m-1}\leq \nonlin'(z)\leq c_2z^{m-1}
	\]
	for all $z\in [0,\varepsilon]$ for certain constants $c_1,c_2>0$, $\varepsilon\in(0,1)$ and $m>1$. \label{itm:nonlin.PME-like.degeneracy}
	\setcounter{counterHypotheses}{\value{enumi}}
\end{enumerate}
Heuristically, \eqref{itm:nonlin.blow-up.assumption} encodes the singularity $\nonlin(1)=\infty$ and \eqref{itm:nonlin.PME-like.degeneracy} the degeneracy $\nonlin'(0)=0$.
The conditions \eqref{itm:nonlin.basic.assumption} and \eqref{itm:nonlin.blow-up.assumption} are used to prove well-posedness in \cite{HissMull-Sonn22}.
The newly added assumption \eqref{itm:nonlin.PME-like.degeneracy} provides a connection to the porous medium equation and it is instrumental to prove H\"older continuity.
Of course, \eqref{itm:nonlin.basic.assumption} is implied by \eqref{itm:nonlin.PME-like.degeneracy}, but we mention both assumptions, because they play different roles in the study of \eqref{eq:SD-PME}.

It is important to point out that $\nonlin'\geq \lambda > 0$ on $[\varepsilon,1)$ for some $\lambda>0$.
Indeed, \eqref{itm:nonlin.blow-up.assumption} implies that $\lim_{z\to1}\nonlin(z)=\infty$, and therefore $\lim_{z\to 1} \nonlin'(z) = \infty$ as well so that $\nonlin'$ attains a minimum in $[\varepsilon,1)$.
This minimum cannot be $0$ due to the assumption $\nonlin'>0$ in $(0,1)$.

On the reaction term $\source:\Omega_T\times [0,1)\to\IR$ we impose:
\newcounter{counterHypothesesReactionTerms}
\begin{enumerate}[label=\upshape(R\arabic*),ref=\upshape R\arabic*]
	\item\label{itm:source.Lipschitz.assumption} $\source$ is measurable and there exists a $\LipBoundSource\geq 0$ such that 
	\[
	\norm{\source(\emptyarg,z)}_{L^\infty(\Omega_T)}\leq \LipBoundSource z^{-{m_0}} \quad \text{for all}\ z\in[0,1),
	\]
	where $m_0\in[0,m)$ with $m$ as in \eqref{itm:nonlin.PME-like.degeneracy}.
	\setcounter{counterHypothesesReactionTerms}{\value{enumi}}
\end{enumerate}
In is important to remark that \eqref{itm:source.Lipschitz.assumption} is not restrictive.
Indeed, solutions take values in the interval $[0,1)$, hence reaction terms such as
\[
\source(x,t,\solA)=g(x,t) \solA \quad \text{and} \quad \source(\solA) = \solA^p(1-\solA)^q + c
\]
are included for any bounded function $g$ and constants $p>-m$, $q\geq 0$ and $c\in\IR$.
The first example appears in the equation for $M$ of the biofilm growth model \eqref{eq:Biofilm} (provided that the function $C$ is known).
The latter example is used in the Porous-Fisher equation $\solA_t=\Delta \solA^m +\solA(1-\solA)$, studied e.g.\ in \cite{mccue2019hole}.

We employ the following notation. 
Given a measurable set $K\subseteq\Omega$, let $\pinprod{\emptyarg}{\emptyarg}$ denote the pairing of $H^{-1}(K)$ with $H^1_0(K)$ and let $\inprod{\emptyarg}{\emptyarg}$ denote the $L^2(K)$-inner product.
We use the following solution concept.
\begin{definition}\label{def:local.solution}
	A measurable function $\solA:\Omega_T\to[0,1)$ is called a local solution of \eqref{eq:SD-PME} if for any compact subset $K \subset \Omega$ we have that
	\begin{enumerate}[label=(\roman*),font=\itshape]
		\item $\solA\in W^{1,2}_\loc(0,T;H^{-1}({K}))$, $\nonlin(\solA)\in L^2_\loc(0,T;H^1({K}))$ and $\source(\emptyarg,\solA)\in L_{\loc}^2(0,T;L^2({K}))$, and
		\item the identity
		\begin{equation}\label{eq:SD-PME.local.solution.including.time-derivative.id}
			\displayindent0pt
			\displaywidth\textwidth
			\begin{aligned}
				\pinprod{\solA_t}{\testA}+\inprod{\nabla\nonlin(\solA)}{\nabla\testA}=\inprod{\source(\emptyarg,\solA)}{\eta}
			\end{aligned}
		\end{equation}
		holds a.e.\ in $(0,T)$ for all $\eta\in H^1_0({K})$.
	\end{enumerate}
\end{definition}
Define the \emph{parabolic boundary} of $\Omega_T$ by $\Gamma=\overline{\Omega}\times\{0\}\cup \partial\Omega\times(0,T)$ and the \emph{parabolic distance} of compact set $K\subset \Omega_T$ to $\Gamma$ by
\[
\mathrm{dist}(K;\Gamma)=\inf\left\{\abs{x-y}+\abs{t-s}^{\frac{1}{2}}\, : \, (x,t)\in K,\ (y,s)\in \Gamma\right\}.
\]
The following statement is the main result of this paper.
\begin{theorem}[Interior H\"older continuity of solutions]\label{thm:holder}
	Let \eqref{itm:nonlin.basic.assumption}, \eqref{itm:nonlin.blow-up.assumption}, \eqref{itm:nonlin.PME-like.degeneracy} and \eqref{itm:source.Lipschitz.assumption} be satisfied and let $\solA$ be a local solution of \eqref{eq:SD-PME} that is bounded away from $1$, that is, there exists a $\mu \in(0,1)$ such that $\solA\leq 1-\mu$.
	Then, there exist constants $C\geq 0$ and $\alpha\in(0,1)$ depending only on $N$, $c_1$, $c_2$, $\varepsilon$, $m$, $\LipBoundSource$, $m_0$, ${M}:=\max_{0\leq z\leq 1-\mu}\nonlin'(z)$ and $\lambda:=\min_{\varepsilon\leq z<1}\nonlin'(z)$ such that
	\begin{equation}\label{eq:holder}
		\abs{\solA(x_0,t_0)-\solA(x_1,t_1)}\leq C\left(\frac{\abs{x_0-x_1}+\abs{t_0-t_1}^{\frac{1}{2}}}{d(K;\Gamma)}\right)^\alpha
	\end{equation}
	for all $(x_0,t_0),(x_1,t_1)\in K$ for any compact $K\subset\Omega_T$, where $d(K;\Gamma):=\min \left\{ \mathrm{dist}(K;\Gamma),1 \right\}$.
\end{theorem}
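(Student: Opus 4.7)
The plan is to follow the classical intrinsic scaling argument of DiBenedetto and Friedman, adapted as indicated in the introduction to the singular-degenerate structure of \Cref{eq:SD-PME} and to the possibly singular reaction term. Fix a compact set $K\subset\Omega_T$ and a reference point $(x_0,t_0)\in K$. For small $R>0$ and for an upper bound $\omega\geq\essosc\solA$ on a suitable cylinder, I would work with the intrinsically scaled cylinder
\[
Q_R(\omega)=B_R(x_0)\times\left(t_0-c\,\omega^{-(m-1)}R^2,\,t_0\right],
\]
where $c>0$ is an absolute constant; such an $\omega$ exists and is bounded in terms of $\mu$ because $\solA\leq 1-\mu$. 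The central goal is to establish an oscillation-decay estimate
\[
\essosc_{Q_{\eta R}(\eta\omega)}\solA\leq \max\!\left(\eta\omega,\,CR^{\alpha_0}\right),
\]
with $\eta\in(0,1)$ and $\alpha_0>0$ depending only on the structural data listed in the statement. Iterating this estimate on a geometric sequence of shrinking intrinsic cylinders and summing the resulting geometric series would then yield \eqref{eq:holder} by the standard argument.

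The engine of the oscillation decay is the De Giorgi-type Lemma announced in \Cref{sect:Geometry.Holder}: if $\solA$ stays far from its essential supremum on a sufficiently large portion of $Q_R(\omega)$, then $\esssup \solA$ strictly decreases on a smaller sub-cylinder, and symmetrically for $\essinf$. To apply it, I would split into two alternatives according to whether the oscillation is ``really large'' or not, using here the threshold $\omega^m\geq R$ rather than the usual $\omega^{m-1}\geq R^\varepsilon$, precisely in order to absorb the contribution of $\source$ via \eqref{itm:source.Lipschitz.assumption}. In the ``small oscillation'' alternative the equation is essentially non-degenerate on the range of values of interest and a direct application of the De Giorgi lemma near $\esssup\solA$ suffices. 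In the ``really large'' alternative I would analyse the two classical sub-cases, depending on whether the sub-level set where $\solA$ is close to $\essinf\solA$ occupies a small or large fraction of the time slices of $Q_R(\omega)$: in the former, De Giorgi again closes the step; in the latter, a logarithmic integral estimate is used to propagate information from a single good time slice to a full sub-cylinder, followed by a final De Giorgi iteration.

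The preparatory work in \Cref{sect:Int.est.and.Aux} and \Cref{sect:Proof.of.DeGiorgiLemma} consists of deriving local energy and logarithmic estimates for the truncations $(\nonlin(\solA)-k)_\pm$ and $(\solA-k)_\pm$ tested against spatial cut-offs. A subtle point is that \Cref{def:local.solution} only asserts $\solA_t\in H^{-1}$ and $\nonlin(\solA)\in H^1$, so the truncation cannot be inserted directly in \eqref{eq:SD-PME.local.solution.including.time-derivative.id}; a chain rule for the $\pinprod{\solA_t}{\cdot}$ term must be justified by Steklov averaging, which is exactly what the time-regularity part of our solution concept is tailored to support. Near the degeneracy $\solA=0$ the natural energy functional involves $(\nonlin(\solA)-k)_\pm$ rather than $(\solA-k)_\pm$, and the comparison between the two is performed as in \cite{Urb08} using \eqref{itm:nonlin.PME-like.degeneracy}; away from zero the lower bound $\nonlin'\geq\lambda>0$ on $[\varepsilon,1-\mu]$ makes the problem uniformly parabolic and non-degenerate arguments apply.

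I expect the main obstacle to be the simultaneous handling, inside the De Giorgi iteration, of the porous-medium degeneracy of $\nonlin'$ at $0$ and the singularity of $\source$ at $0$: a naive bound of the reaction contribution on a level set $\{\solA\leq k\}$ could blow up as $k\to 0$, while the iteration fundamentally takes place on sets where $\solA$ is small. The way out is to trade the singular factor $z^{-m_0}$ against the intrinsic time scale $\omega^{-(m-1)}R^2$ and the spatial factor $z^{m-1}$ coming from $\nonlin'$; this is exactly where the assumption $m_0<m$ is used and where the modified threshold $\omega^m\geq R$ turns out to be the correct one. Careful bookkeeping of all constants through this trade-off, and the correct choice of the iteration parameters defining $\eta$ and $\alpha_0$, is what I anticipate to be the most delicate part of the argument.
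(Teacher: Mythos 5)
Your proposal captures the paper's broad strategy correctly: intrinsic space--time cylinders scaled by $\omega^{1-m}$, a De Giorgi--type lemma proved via local energy and logarithmic estimates, a chain rule for the $\pinprod{\solA_t}{\cdot}$ pairing justified by Steklov averaging, the modified threshold $\omega^m\geq R$ in place of $\omega^{m-1}\geq R^\varepsilon$ to absorb the reaction term, and the trade-off using $m_0<m$. However, the way you organize the dichotomy is not how the paper does it, and as stated your plan has a gap.

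In the paper, $\omega^m\geq R$ is \emph{not} the dichotomy. It is an \emph{invariant of the iteration}: the shrinking factor $a$ in $R_{n+1}=aR_n$ is chosen proportional to $\eta_0^m$ (rather than the classical $\eta_0^{m-1}$) precisely so that, once $\omega_0^m\geq R_0$ holds at the first step (which is trivially arranged by taking $\omega_0=1$ and $R_0\leq R_{\mathrm{max}}\leq 1$), the condition $\omega_n^m\geq R_n$ propagates automatically. The actual split in \Cref{prop:iterative.scheme} is whether $\mu_-=\essinf_{Q_n}\solA\leq\omega_n/4$ holds. When it fails ($\mu_->\omega_n/4$), the solution is bounded away from $0$ on $Q_{n}$, and the intrinsic scaling by $\omega^{1-m}$ is no longer the right scale for the energy and logarithmic estimates (in particular factors like $l^{-m_0}$ and $\mu_+^{m-1}/\omega^{m-1}$ cease to be controlled by $\omega$). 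The paper handles this by a change of variables that stretches space by $\sigma^{1/2}\rho$ and time by $\sigma$ with $\rho^2=\mu_-^{m-1}$, $\sigma=\mu_-^{m_0}$, after which \eqref{eq:SD-PME.stretched.space.variable} is uniformly parabolic with ellipticity constants depending only on $c_1,c_2,m$ and a bounded reaction term, and one invokes the classical non-degenerate oscillation lemma from Ladyzhenskaya--Solonnikov--Ural'ceva (\Cref{lem:Ladyzenskaja.non.deg.improv.osc}), \emph{not} the De Giorgi lemma of \Cref{lem:DeGiorgi-type}.

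Your description conflates these two distinctions. ``Small oscillation'' ($\omega^m<R$) does not make the equation non-degenerate — $\solA$ can perfectly well take values near $0$ while $\omega$ is tiny — and \Cref{lem:DeGiorgi-type} is not available there anyway because it hypothesizes $\omega^m\geq R$. Conversely, the hypothesis $\mu_-\leq\omega/4$ that you omit is essential for the energy estimate of \Cref{prop:interior.energy.estimate.lower.trunc}, where the truncation level $l=\mu_-+\tfrac{\omega}{4}$ must be comparable with $\omega$ so that $l^{m-1}$ and $l^{-m_0}$ match the intrinsic scaling. Without a plan for the case $\mu_->\omega/4$ (and without the change of variables and the alternate lemma that replaces De Giorgi there), the iteration stalls. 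This is the real structural novelty forced by working with $\solA$ rather than $\solB=\nonlin(\solA)$ — since the paper cannot pass to the Stefan form because $\nonlin$ is not H\"older near $1$ — and it is the piece your sketch is missing.
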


\begin{remark}\label{rem:reaction.term.cond}
	Let us provide some remarks on the assumption \eqref{itm:source.Lipschitz.assumption} for the reaction term.
	\begin{itemize}
		\item The condition \eqref{itm:source.Lipschitz.assumption} has not been considered in the literature to the author's knowledge.
		The condition requires some changes in the classical proof for the prototype porous medium equation $\solA_t=\Delta\solA^m$ in \cite{DiBen1984}.
		It is more general than what we would need in view of the application to the biofilm model \eqref{eq:Biofilm} we have in mind.
		\item Conditions on the reaction terms for the porous medium equation that have been covered include the assumption
		\[
		\abs{\source(\emptyarg,z)}\leq \varphi_1
		\]
		for some non-negative function $\varphi_1\in L^{q}(0,T;L^p(\Omega))$, see page 48 in \cite{Urb08}, and the assumption
		\[
		\abs{\source(\emptyarg,z)}\leq \abs{\solA}^m\varphi_2,
		\]
		for some non-negative function $\varphi_2$ such that $\varphi_2^2\in L^{q}(0,T;L^p(\Omega))$, see page 261 in \cite{DiBenedetto2012}.
		In both cases $p$ and $q$ satisfy $\frac{1}{q}+\frac{N}{2p}\in (0,1)$.
		\item The condition \eqref{itm:source.Lipschitz.assumption} is not covered by our previous work on the well-posedness of \eqref{eq:SD-PME}, see \cite{HissMull-Sonn22}.
		However, the condition that $\source$ is bounded on $\Omega_T\times[0,1)$ is included.
		In particular, the biofilm growth model \eqref{eq:Biofilm} is covered by in both the well-posedness result and our current result on H\"older continuity.
	\end{itemize}
\end{remark}

\begin{remark}\label{rem:extending.PME-like.degeneracy}
	Let us discuss some additional observations regarding the hypothesis \eqref{itm:nonlin.PME-like.degeneracy}.
	\begin{itemize}
		\item Without loss of generality, we may assume that $\varepsilon \leq 1-\mu$ in \eqref{itm:nonlin.PME-like.degeneracy} so that the solution $\solA$ in \Cref{thm:holder} satisfies
		\begin{equation}\label{eq:extending.PME-like.degeneracy}
			c_1 \solA^{m-1}\leq \nonlin'(\solA)\leq c_2 \solA^{m-1}.
		\end{equation}
		Indeed, suppose $\varepsilon > 1-\mu$ and observe that the function $\nonlin'$ satisfies $\nonlin'\leq {M}$ on $[\varepsilon , 1-\mu]$ and $\nonlin'\geq \lambda$ on $[\varepsilon,1)$.
		There certainly exist constants $d_1$, $d_2>0$ such that $d_1z^{m-1}\leq\nonlin'(z)\leq d_2 z^{m-1}$ for all $z\in [\varepsilon,1-\mu]$.
		For instance, set $d_1={M}$ and $d_2=\lambda / \varepsilon$.
		We pick new constants $\tilde{c}_1=\min\{c_1,d_1\}$ and $\tilde{c}_2=\min\{c_2,d_2\}$, which depend on $c_1$, $c_2$, $\varepsilon$, ${M}$ and $\lambda$.
		Now \eqref{itm:nonlin.PME-like.degeneracy} holds with $c_1$, $c_2$ and $\varepsilon$ replaced by $\tilde{c}_1$, $\tilde{c}_2$ and $1-\mu$, respectively.
		\item We point out that the previous argument is the only place where $\epsilon$, ${M}$ and $\lambda$ play a role in the proof of \Cref{thm:holder}.
		We will always assume that $\varepsilon\leq 1-\mu$ in \eqref{itm:nonlin.PME-like.degeneracy} holds and we do not mention the dependency of the constants on $\varepsilon$, ${M}$ and $\lambda$ from now on.
		The symbols $\varepsilon$ and $\lambda$ are used again below in different contexts, which is justified by this remark.
		\item Observe that \eqref{itm:nonlin.PME-like.degeneracy} implies that
		\begin{equation}\label{eq:nonlin.PME-like.degeneracy.integrated}
			c_1z^{m}\leq m\nonlin(z)\leq c_2z^{m}.
		\end{equation}
		This can be seen by integrating the estimate over $z$, multiplying by $m$ and observing that \eqref{itm:nonlin.basic.assumption} and \eqref{itm:nonlin.blow-up.assumption} imply $\nonlin(0)=0$.
	\end{itemize}
\end{remark}

\begin{remark}\label{rem:defined.all.time}
	It is important to point out that $\solA(t)\in L^1(K)$ is well-defined for all $t\in(0,T)$.
	Indeed, for any $t_1,t_2\in(0,T)$, $t_1<t_2$, we have that $\solA\in W^{1,2}(t_1,t_2;H^{-1}(K))\subset C([t_1,t_2];H^{-1}(K))$.
	Therefore, $\solA(t)\in L^1(K)\cap L^\infty(K) \subset L^2(K) \subset H^{-1}(K)$ is uniquely determined.
	Moreover, $\solA\in L^\infty(t_1,t_2;L^1(K))$ with $\norm{\solA(t)}_{L^1(K)}\leq \abs{K}$ for all $t\in(0,T)$ due to $0\leq\solA<1$.	
\end{remark}

\begin{remark}
	Let us also comment on the use of the parabolic distance.
	\begin{itemize}
		\item In the literature, see \cite{DiBenedetto2012} or \cite{Urb08}, the intrinsic parabolic $m$-distance is introduced to quantify the dependency of the constants of \Cref{thm:holder} on the compact subset $K\subset\Omega_T$ and $\norm{\solA}_{L^\infty(\Omega_T)}$.
		In our case we know that $0\leq\solA\leq 1$, so the standard parabolic distance suffices.
		\item In the denominator of \eqref{eq:holder} we use $d(K;\Gamma)$ instead of $\mathrm{dist}(K;\Gamma)$.
		This is due to the lower order term in the equation.
		To justify the estimates in the computations, we need to bound the size of the appropriate space-time cylinders.
		Without the reaction term the full parabolic distance $\mathrm{dist}(K;\Gamma)$ can be used and a Liouville-type result can be deduced.
		However, this is not possible in our case.
	\end{itemize}	
\end{remark}

\begin{remark}
	We mention a few additional points concerning the condition that the solution has to be bounded away from $1$.
	\begin{itemize}
		\item We can replace it by the requirement that the solution is \emph{locally} bounded away from $1$, that is, for every compact subset $K\subset\Omega_T$ there exists a $\mu$ such that $\solA\leq 1-\mu$ in $K$.
		In this case, $C$ and $\alpha$ in \Cref{thm:holder} do depend on $K$.
		\item The assumption is not restrictive and builds upon our earlier work \cite{HissMull-Sonn22}.
		There we showed that $\solA \leq 1-\mu$ provided that the initial data $\solA_0$ satisfies $\solA_0\leq 1-\theta$, where $\mu$ depends on $\theta$.
		This argument relies on the blow-up behaviour of $\nonlin$ encoded in \eqref{itm:nonlin.blow-up.assumption} and on the domain being bounded and having a Lipschitz boundary.
		Now, the interior H\"older continuity of $\solA$ only depends on the initial data $\solA_0$ through $\mu$.
		\item The assumption implies that the singularity of $\nonlin$ in \eqref{itm:nonlin.blow-up.assumption} is not attained.
		Indeed, we assume that $\solA\leq 1-\mu$ so the behaviour of $\nonlin$ in $(1-\mu,1)$ does not matter, and therefore we could remove \eqref{itm:nonlin.blow-up.assumption} provided that we impose that $\nonlin'\geq \lambda>0$ on $[\varepsilon,1-\mu]$ for some $\lambda>0$ and $\nonlin(0)=0$.
		Consequently, our result also holds for degenerate equations without a singularity such as the Porous-Fisher equation, see \cite{mccue2019hole}.
		However, we aim to prove H\"older continuity for the solutions obtained in our earlier work on well-posedness, see \cite{HissMull-Sonn22}, where \eqref{itm:nonlin.blow-up.assumption} is a key ingredient to obtain the bound $\solA \leq 1-\mu$.
		Moreover, \eqref{itm:nonlin.blow-up.assumption} plays a fundamental role to obtain the uniform bound $\solA<1$ and it is necessary to be able to formulate $\eqref{eq:SD-PME}$ as a Stefan problem of the form \eqref{eq:Stefan}, since otherwise $\nonlin$ might not be invertible. 
		Therefore, we keep the hypothesis \eqref{itm:nonlin.blow-up.assumption}.
	\end{itemize}
\end{remark}
We use the following notation throughout.
Given $k\in\IR$, we write
\[
\left[\solA < k \right] = \left\{(x,t)\in \Omega_T \, :\, \solA(x,t)<k \right\}\quad \text{and} \quad \left[\solA(t) < k\right] = \left\{ x \in \Omega \, :\, \solA(x,t)<k \right\}
\]
for fixed $t\in(0,T)$ and we extend the notation for $\leq$, $>$ and $\geq$ in an obvious manner.
Both sets are defined up to a subset of measure zero, the latter for all $t\in(0,T)$ by \Cref{rem:defined.all.time}, hence their measures are well-defined and their characteristic functions exist almost everywhere.	
	
\section{Geometry for the equation and proof of the main result}\label{sect:Geometry.Holder}
In this section we prove \Cref{thm:holder} based on an intermediary result called the De Giorgi-type Lemma, which we prove in \Cref{sect:Proof.of.DeGiorgiLemma}.
The outline of this section is as follows.
First, we discuss the method of \emph{intrinsic scaling} heuristically.
Then, we rigorously define the intrinsic geometrical scaling tailored for our equation, where we introduce the necessary notation and assumptions concerning intrinsically scaled space-time cylinders.
After this we state the De Giorgi-type Lemma.
Next, we provide an iterative scheme where we consider a shrinking sequence of the cylinders and we show that the oscillation of $\solA$ in each cylinder decreases proportionally to its size using the De Giorgi-lemma iteratively.
In the next step of the proof of \Cref{thm:holder} we consider the case where the De Giorgi-lemma cannot be applied.
In this situation we can use classical estimates for non-degenerate equations.
Finally, we combine both cases in one final Corollary, which we then use to prove \Cref{thm:holder}.

Let us begin with a short informal discussion on intrinsic scaling.
The proof of interior H\"older continuity for non-degenerate parabolic equations relies on estimates of the oscillation of the solution $\solA$ in a family of shrinking space-time cylinders $\{Q_n\}_{n=0}^\infty$, see \cite{lady1968}.
These cylinders are roughly of the form $Q_n=B_{R_n}(x_0) \times (-R_n^2+t_0,t_0)$ and their space-time scaling reflects the structure of the equation.
For instance, the heat equation $\solA_t=\Delta\solA$ is scale invariant under coordinate transformations of the type $(x,t)\mapsto (\lambda x,\lambda^2 t)$, $\lambda\in\IR$, that is, if $\solA$ is a solution of the equation then $\solA_\lambda$ given by $\solA_\lambda(x,t)=\solA(\lambda x,\lambda^2,t)$ is a solution as well.
The method of shrinking cylinders with this parabolic scaling fails to hold for degenerate equations such as the porous medium equation $\solA_t=\Delta \solA^m$.
However, we can rewrite this equation as
\[
\frac{1}{m} \frac{1}{\solA^{m-1}}(\solA^m)_t=\Delta \solA^m,
\]
which is a non-degenerate equation except for the factor $\solA^{1-m}$.
Therefore, we should consider space-time cylinders of the form $Q_n=B_{R_n}(x_0) \times (-\solA^{1-m}R^2_n+t_0,t_0)$ instead.
These cylinders are defined in terms of the solution itself so we say that they are intrinsically scaled.
The techniques for non-degenerate equations can be applied along this set of shrinking cylinders, since we now consider the equation in its own geometry.
This method is only relevant if $\solA$ is small, because then the equation becomes degenerate.
Consequently, the intrinsically scaled cylinders are stretched along the temporal axis to \emph{accommodate the degeneracy}.
\Cref{fig:intrinsicscalingsolplotwithcylinders} highlights the difference in the types of scaling.
Here, $\solA$ is a solution of \eqref{eq:SD-PME} and we consider points $(x_0,t_0)$ and $(\tilde{x}_0,\tilde{t}_0)$.
We pick $(x_0,t_0)$ near the degeneracy and $(\tilde{x}_0,\tilde{t}_0)$ away from the degeneracy.
In the first case we use intrinsically scaled cylinders and in the latter case we use classically scaled cylinders.

\begin{figure}[ht]
	\centering
	\includegraphics[width=0.8\linewidth]{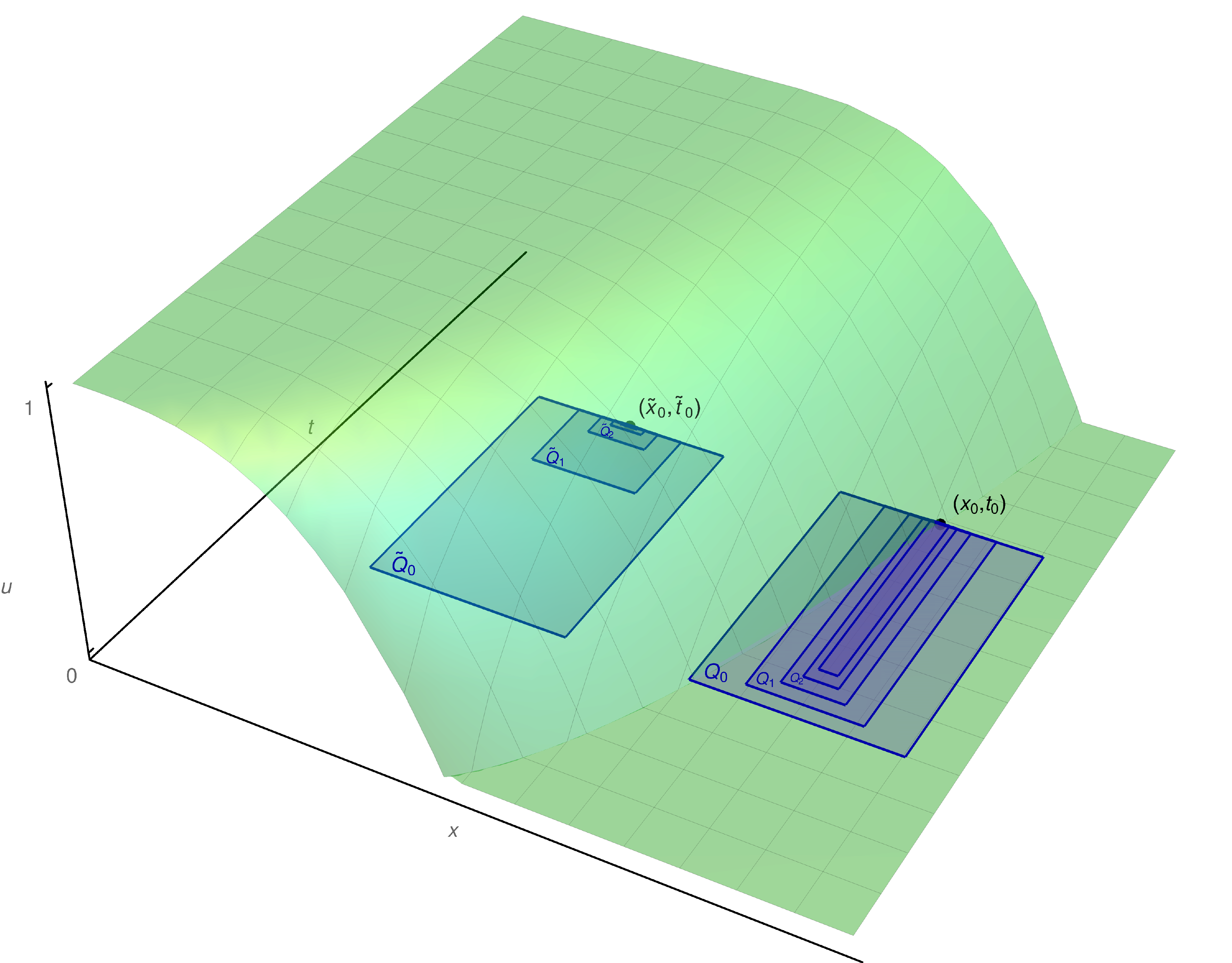}
	\caption{Intrinsically scaled and classically scaled cylinders.}
	\label{fig:intrinsicscalingsolplotwithcylinders}
\end{figure}

Let us now provide the intrinsic scaling in a rigorous manner.
First, assume that $\solA$ is a local solution of \eqref{eq:SD-PME} that is bounded away from $1$.
Let $(x_0,t_0)\in \Omega_T$ and assume that $(x_0,t_0)=(0,0)$ by translating the axes, where we note that only the reaction term $\source$ is not invariant under such a translation, but we only rely on global estimates of this term, hence the assumption is justified.
Given $R_1,R_2>0$ we define the space-time cylinder
\begin{equation*}
	Q(R_1,R_2)=B_{R_1}(0)\times(-R_2,0),
\end{equation*}
and given $\omega,R>0$ we define the \emph{intrinsically scaled cylinder}
\[
Q_\omega(R):=Q(R,\omega^{1-m}R^2).
\]
For now we assume that
\begin{equation}\label{eq:cond.intrinsic.scaled.cylinder.in.domain}
	Q_\omega(R)\subseteq Q_\omega(2R)\subseteq\Omega_T.
\end{equation}
We define $\mu_-,\mu_+\in[0,1-\mu]$ and the essential oscillation of $\solA$ by
\[
\mu_-:=\essinf_{Q_\omega(R)}\solA,\quad \mu_+:=\esssup_{Q_\omega(R)}\solA,\quad\essosc_{Q_\omega(R)}\solA:=\mu_+-\mu_-.
\]
We assume that
\begin{equation}\label{eq:cond.oscillation.recursion.hypothesis}
	\essosc_{Q_\omega(R)}\solA\leq \omega,
\end{equation}
so that $\omega$ can be viewed as (an upper bound of) the essential oscillation of $\solA$ in $Q_\omega(R)$.
Further, we assume that
\begin{equation}\label{eq:cond.inf.small}
	\mu_-\leq \frac{\omega}{4}
\end{equation}
holds as well.
Note that \eqref{eq:cond.inf.small} implies that $\mu_+\leq \omega+\mu_-\leq \frac{5}{4}\omega$.

\begin{remark}
	Heuristically, \eqref{eq:cond.inf.small} means that $\solA$ takes values close to $0$ and this specific estimate gives us a means to quantify this in terms of the oscillation.
	We use it primarily to estimate $\mu_+$ in terms of $\omega$.
	If \eqref{eq:cond.inf.small} does not hold, then $\solA$ is bounded away from $0$ in $Q_\omega(R)$.
	In this case we are effectively dealing with a non-degenerate equation and we can invoke standard estimates, see the text below \Cref{prop:iterative.scheme}.
\end{remark}

We use the notation
\[
Q^{\nu_0}_\omega(R)=Q\left(R,\frac{\nu_0}{2}\omega^{1-m}R^2\right),
\]
for a given $\nu_0\in(0,1)$.

\begin{proposition}[De Giorgi-type Lemma]\label{lem:DeGiorgi-type}
	Assume that \eqref{itm:nonlin.basic.assumption}, \eqref{itm:nonlin.blow-up.assumption}, \eqref{itm:nonlin.PME-like.degeneracy} and \eqref{itm:source.Lipschitz.assumption} are satisfied and let $\solA$ be a local solution of \eqref{eq:SD-PME} that is bounded away from $1$.
	Then, there exist constants $R_{\mathrm{max}},\nu_0\in(0,1)$ and $n_0\in\IN$ depending only on $N$, $c_1$, $c_2$, $m$, $\LipBoundSource$ and $m_0$ such that the following holds.
	If $R\in(0,R_{\mathrm{max}}]$ and $\omega>0$ are such that \eqref{eq:cond.intrinsic.scaled.cylinder.in.domain}, \eqref{eq:cond.oscillation.recursion.hypothesis}, \eqref{eq:cond.inf.small} and
	\begin{equation}\label{eq:oscillation.Really.Large}
		\omega\geq R^{\frac{1}{m}}
	\end{equation}
	are satisfied, then we have the dichotomy:
	\begin{enumerate}[label=(\roman*),font=\itshape]
		\item If
		\begin{equation}\label{eq:DeGiorgi-type.alternative.I}
			\abs{Q_\omega(R)\cap\left[\solA<\mu_-+\frac{\omega}{2}\right]}<\nu_0\abs{Q_\omega(R)},
		\end{equation}
		then $\solA>\mu_-+\frac{\omega}{4}$ a.e.\ in $Q_\omega(\frac{R}{2})$.
		\item If
		\begin{equation}\label{eq:DeGiorgi-type.alternative.II}
			\abs{Q_\omega(R)\cap\left[\solA\geq\mu_-+\frac{\omega}{2}\right]}\leq(1-\nu_0)\abs{Q_\omega(R)},
		\end{equation}
		then $\solA<\mu_-+\left(1-\frac{1}{2^{n_0}}\right)\omega$ a.e.\ in $Q^{\nu_0}_\omega(\frac{R}{2})$.
	\end{enumerate}
\end{proposition}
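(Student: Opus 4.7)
The plan is to follow the classical De Giorgi iteration strategy from \cite{DiBen1984}, carried out separately in the two alternatives, with the intrinsic time scaling $\omega^{1-m}R^2$ compensating for the fact that $\nonlin'(u)\sim u^{m-1}\sim\omega^{m-1}$ whenever $u$ takes values of order $\omega$ (by \eqref{itm:nonlin.PME-like.degeneracy} and $\mu_+\leq\tfrac{5}{4}\omega$). The main ingredients are: (a) the local energy (Caccioppoli) inequalities for the truncations $(\solA-k)_{\pm}$ together with the Urbano-type device (Chapter 6 of \cite{Urb08}) which handles the degeneracy of the time term near $\solA=0$; (b) a parabolic embedding to turn the energy inequality into an $L^\infty$-estimate on shrinking cylinders; and (c) a logarithmic estimate on $\log(\omega/(\mu_-+\omega-\solA))$ to propagate a pointwise-in-time smallness in the upward direction. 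All of these are provided in \Cref{sect:Int.est.and.Aux}; here I just orchestrate them.

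For alternative (i), set $R_n=\tfrac{R}{2}(1+2^{-n})$, $k_n=\mu_-+\tfrac{\omega}{4}+\tfrac{\omega}{2^{n+2}}$, and choose cut-offs $\testC_n$ equal to $1$ on $Q_\omega(R_{n+1})$, vanishing outside $Q_\omega(R_n)$, with $|\nabla\testC_n|\lesssim 2^n/R$, $|\partial_t\testC_n|\lesssim 4^n\omega^{m-1}/R^2$. The energy inequality for $(\solA-k_n)_-$ with weight $\testC_n^2$, after multiplying by $\omega^{m-1}$ to absorb the intrinsic time-scaling, yields a self-improving inequality for $Y_n:=|Q_\omega(R_n)\cap[\solA<k_n]|/|Q_\omega(R_n)|$. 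Standard fast-geometric-convergence arguments then give $Y_n\to 0$ provided $Y_0\leq\nu_0$ for a suitable $\nu_0$ determined by $N$ and the structural constants, which is exactly \eqref{eq:DeGiorgi-type.alternative.I}. Hence $\solA\geq\mu_-+\tfrac{\omega}{4}$ a.e.\ on $Q_\omega(R/2)$.

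Alternative (ii) proceeds in three steps. First, \eqref{eq:DeGiorgi-type.alternative.II} combined with a pigeonhole argument in time yields a time level $t^\ast\in(-\omega^{1-m}R^2,-\tfrac{\nu_0}{2}\omega^{1-m}R^2)$ such that $|\{x\in B_R:\solA(x,t^\ast)>\mu_-+\tfrac{\omega}{2}\}|\leq(1-\tfrac{\nu_0}{2})|B_R|$. Second, apply the logarithmic estimate for $\testB:=[\log((\omega/2)/(\mu_-+\omega-\solA))_+]^2$ on $B_R\times(t^\ast,0)$: its spatial $L^1$-norm is controlled by the spatial $L^1$-norm at time $t^\ast$ plus the length of the interval times the structural constant, so that for every integer $s$ we obtain $|\{x\in B_{R/2}:\solA(x,t)>\mu_-+(1-2^{-s-1})\omega\}|\leq C\nu_0^{-1}s^{-1}|B_{R/2}|$ uniformly on $(t^\ast,0)$. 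Choosing $s=s_0$ large enough makes this smaller than any prescribed constant. Third, with $\omega^{\ast}:=2^{-s_0}\omega$ and levels $k_n=\mu_-+(1-2^{-s_0})\omega+\omega^\ast(1+2^{-n})/2$, run a De Giorgi iteration for $(\solA-k_n)_+$ on the cylinders $Q^{\nu_0}_\omega(R_n)$ to conclude $\solA<\mu_-+(1-2^{-s_0-1})\omega$ a.e.\ on $Q^{\nu_0}_\omega(R/2)$; setting $n_0:=s_0+1$ gives the stated conclusion.

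The principal obstacle is the singular reaction term. In every energy and logarithmic estimate the contribution of $\source$ produces integrals of the form $\int \LipBoundSource \solA^{-m_0}\testC^2(\solA-k)_\pm\,\md x\md t$, and after rescaling by $\omega^{m-1}$ these pick up a factor $\omega^{m-1-m_0}R^2$ which, compared with the diffusion-type contribution of order $\omega^{2}$, must be small in order to be absorbed. This is precisely what \eqref{eq:oscillation.Really.Large} provides: $\omega\geq R^{1/m}$ gives $\omega^{m-1-m_0}R^2\leq \omega^{2-m_0/m}\cdot\omega^{m_0/m}\cdot R^{(m_0-m)/m+1}$ which, because $m_0<m$, is controlled by a pure power of $\omega$ and may be absorbed provided $R\leq R_{\max}$ for some structural $R_{\max}$; this is the reason \eqref{eq:oscillation.Really.Large} replaces the classical condition $\omega^{m-1}\geq R^\varepsilon$. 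A secondary technical issue is that the chain-rule identity needed to turn $\pinprod{\solA_t}{\testA}$ into a time derivative of a non-negative quantity fails at the degeneracy $\solA=0$; this is bypassed exactly as in \cite[Ch.~6]{Urb08} by truncating at level $\delta>0$ in the time term and then letting $\delta\to 0$, which is legitimate in our solution concept since \Cref{def:local.solution} gives $\solA_t$ as a bounded functional on $H^1_0(K)$.
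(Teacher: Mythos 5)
Your plan for alternative (i) and for step 1 (pigeonhole in time) and step 3 (the upward De Giorgi iteration) of alternative (ii) matches the paper's argument in \Cref{prop:DeGiorgi.first.alternative}, \Cref{lem:Alternative.condition.specific.tau}, and \Cref{prop:DiGiorgi.SecondAlternative}, and your diagnosis of why \eqref{eq:oscillation.Really.Large} together with $R\leq R_{\mathrm{max}}$ controls the reaction term is essentially correct. The gap is in step 2.

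The logarithmic estimate (\Cref{prop:int.log.estimate}) cannot, by itself, make the time-slice measure \emph{arbitrarily} small, as you claim with the bound $C\nu_0^{-1}s^{-1}\abs{B_{R/2}}$. The function used in the logarithmic estimate must be \emph{truncated} so that it stays bounded: with the paper's $\varphi$ (which carries a $+\omega/2^l$ term in the denominator) one has $\varphi\leq (l-k)\log 2$ everywhere and $\varphi\geq (l-k-1)\log 2$ on the target set, so the resulting estimate divides the time-$t^\ast$ term by $\bigl(\tfrac{l-k}{l-k-1}\bigr)^2\to 1$ as $l\to\infty$ — it can only reduce the measure to a definite fraction of $\abs{B_R}$ (the paper achieves $1-(\nu_0/2)^2$ in \Cref{cor:Alternative.condition.time-interval}), not to zero. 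If instead you use the untruncated $\log^+\bigl(\tfrac{\omega/2}{\mu_-+\omega-\solA}\bigr)$ as in your proposal, the spatial $L^1$-norm at time $t^\ast$ is not controlled at all: $\solA(t^\ast)$ may come arbitrarily close to $\mu_+\leq\mu_-+\omega$, so the initial datum of the logarithmic inequality is potentially infinite. Either way, step 2 as written does not yield a starting condition for your De Giorgi iteration in step 3. The missing ingredient is the De Giorgi isoperimetric inequality (\Cref{lem:Poincare-type.ineq.tracking.levelsets}), applied iteratively in \Cref{lem:alternative.condition.refined} with the gradient energy estimate of \Cref{lem:Gradient.Estimate.via.Energy.Ineq}: knowing that on every slice a positive fraction of $B_R$ lies below the intermediate level $\mu_-+\omega-\omega/2^{n_*}$, the $L^1$-gradient of $(\solA-k)_+$ between consecutive levels controls the measure jump, and summing over levels produces the arbitrary-smallness bound for the space-time measure. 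Without that intermediate step, the chain from \eqref{eq:DeGiorgi-type.alternative.II} to the second conclusion does not close.
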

\Cref{sect:Proof.of.DeGiorgiLemma} is dedicated to proving \Cref{lem:DeGiorgi-type}.
The constants $R_{\mathrm{max}}$ and $\nu_0$ are defined explicitly by \eqref{eq:define.Rmax} and \eqref{eq:def.nu_0}, respectively, in terms of $N$, $c_1$, $c_2$, $m$, $\LipBoundSource$ and $m_0$.

\begin{remark}
	Comparing to previous results, see \cite{Urb08}, we add condition \eqref{eq:oscillation.Really.Large} in \Cref{lem:DeGiorgi-type}.
	We also introduce $R_{\mathrm{max}}$ to ensure that $R$ is small enough in a certain sense.
	Both are needed to derive suitable estimates for the reaction term in the proof of \Cref{lem:DeGiorgi-type}.
	This is necessary due to the assumption \eqref{itm:source.Lipschitz.assumption}, which differs from the standard assumptions.
\end{remark}

\begin{remark}
	\Cref{lem:DeGiorgi-type} provides a way to quantitatively improve the oscillation of the solution in a smaller cylinder, a role which is played by the Harnack inequality in the analysis of the heat equation.
	Informally speaking, \Cref{lem:DeGiorgi-type} states that if $\solA$ mostly takes values in the upper half / lower half of $[\mu_-,\mu_+]$, then $\solA$ is bounded away from $\mu_-$ / from $\mu_+$, respectively, in a smaller cylinder.
\end{remark}

Assume \Cref{lem:DeGiorgi-type} is proven.
We set 
\begin{equation}\label{eq:defining.improvement.of.oscillation.parameter}
	{\eta_0} = \max\left\{ \frac{3}{4},1-2^{n_0} \right\}.
\end{equation}
By noting that $Q^{\nu_0}_\omega(R)\subset Q_\omega(R)$ we conclude
\begin{align}
	\essosc_{Q^{\nu_0}_\omega(\frac{R}{2})}\solA&\leq\left\{\begin{array}{cl}
		\mu_+-\mu_--\frac{\omega}{4}			&\quad\text{if \eqref{eq:DeGiorgi-type.alternative.I} holds},\\
		\mu_-+(1-\frac{1}{2^{n_0}})\omega-\mu_-	&\quad\text{if \eqref{eq:DeGiorgi-type.alternative.II} holds}
	\end{array}\right. \nonumber \\
	&\leq {\eta_0} \omega	\label{eq:improvement.of.oscillation.parameter.estimate}
\end{align}
for any $R\in(0,R_{\mathrm{max}}]$ and $\omega>0$ satisfying \eqref{eq:cond.intrinsic.scaled.cylinder.in.domain}, \eqref{eq:cond.oscillation.recursion.hypothesis}, \eqref{eq:cond.inf.small} and \eqref{eq:oscillation.Really.Large}.
Estimate \eqref{eq:improvement.of.oscillation.parameter.estimate} is the key implication of \Cref{lem:DeGiorgi-type}; it shows that the oscillation of $\solA$ is improved in a smaller cylinder.

We proceed with the proof of \Cref{thm:holder} by considering a decreasing sequence of numbers $\{\omega_n\}_{n=0}^\infty$ and of shrinking intrinsically scaled space-time cylinders $\{Q_n\}_{n=0}^\infty$ along which we can iteratively apply \Cref{lem:DeGiorgi-type}.
It allows us to describe the oscillation of $\solA$ in $Q_n$ in terms of the radius of $Q_n$, which is an essential ingredient to prove H\"older continuity.
For the starting cylinder we assume that \eqref{eq:cond.intrinsic.scaled.cylinder.in.domain}, \eqref{eq:cond.oscillation.recursion.hypothesis} and \eqref{eq:oscillation.Really.Large} hold.
Then, we show that \eqref{eq:cond.intrinsic.scaled.cylinder.in.domain}, \eqref{eq:cond.oscillation.recursion.hypothesis} and \eqref{eq:oscillation.Really.Large} are satisfied in each subsequent step of the iteration.
Only \eqref{eq:cond.inf.small} might fail to hold at some step, in which case we have to stop the iterative procedure.
This leads to the split into two cases in the following proposition.

\begin{proposition}[The iterative scheme]\label{prop:iterative.scheme}
	Suppose $R_0\in(0,R_{\mathrm{max}}]$ and $\omega_0>0$ such that \eqref{eq:cond.intrinsic.scaled.cylinder.in.domain}, \eqref{eq:cond.oscillation.recursion.hypothesis} and \eqref{eq:oscillation.Really.Large} hold (substituting $R=R_0$ and $\omega=\omega_0$).
	Define sequences $\{R_n\}_{n=1}^\infty$ and $\{\omega_n\}_{n=1}^\infty$ recursively by
	\[
	R_{n+1}=aR_n,\quad\omega_{n+1}=\eta_0 \omega_n,
	\]
	where ${\eta_0}$ is given by \eqref{eq:defining.improvement.of.oscillation.parameter} and
	\begin{equation}\label{eq:defining.proportionality.R_n}
		a:=\frac{1}{2}\sqrt{\frac{\nu_0}{2}} \eta_0^{m},
	\end{equation}
	and define the cylinders $Q_n=Q_{\omega_n}(R_n)$.
	Then at least one of the following statements holds.
	\begin{enumerate}[label=(\roman*), font=\itshape]
		\item The estimate
		\begin{equation}\label{eq:oscillation.intrisinc.scales.with.cylinder}
			\essosc_{Q_{\omega_n}(R_n)}\solA\leq \omega_0 \left(\frac{R_n}{R_0}\right)^\alpha
		\end{equation}
		holds for all $n\in\IN$, where $\alpha\in(0,1)$ is given by
		\begin{equation}\label{eq:definition.holder.exp}
			\alpha=\frac{\log(\eta_0)}{\log(a)}.
		\end{equation}
		\item There exists an integer $n_*\in\IN$ such that \eqref{eq:oscillation.intrisinc.scales.with.cylinder} holds for all $n\leq n_*$ and the estimate
		\begin{equation}\label{eq:cond.inf.large}
			\essinf_{Q_{{n_*}}}\solA\geq \frac{1}{4}\omega_{n_*} 
		\end{equation}
		holds.
	\end{enumerate}
\end{proposition}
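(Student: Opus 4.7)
The plan is to iterate \Cref{lem:DeGiorgi-type} along the sequence $\{Q_n\}_{n=0}^\infty$, with a finite or infinite induction depending on whether \eqref{eq:cond.inf.small} ever fails. At each step, if \eqref{eq:cond.intrinsic.scaled.cylinder.in.domain}, \eqref{eq:cond.oscillation.recursion.hypothesis}, \eqref{eq:cond.inf.small} and \eqref{eq:oscillation.Really.Large} all hold, then the lemma together with \eqref{eq:improvement.of.oscillation.parameter.estimate} yields the oscillation improvement $\essosc_{Q^{\nu_0}_{\omega_n}(R_n/2)}\solA\leq \eta_0\omega_n = \omega_{n+1}$, after which I move on to step $n+1$. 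The first index $n_*$ at which \eqref{eq:cond.inf.small} is violated defines case (ii); otherwise the iteration continues indefinitely and produces case (i).

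To propagate the induction I must verify that the three structural hypotheses \eqref{eq:cond.intrinsic.scaled.cylinder.in.domain}, \eqref{eq:cond.oscillation.recursion.hypothesis} and \eqref{eq:oscillation.Really.Large} are preserved at step $n+1$ whenever \Cref{lem:DeGiorgi-type} was successfully applied at step $n$. The core check is the inclusion $Q_{\omega_{n+1}}(R_{n+1})\subseteq Q^{\nu_0}_{\omega_n}(R_n/2)$: spatially $aR_n\leq R_n/2$ because $a\leq \tfrac12$, and temporally $\eta_0^{1-m}a^2\leq \nu_0/8$, which after substituting the definition \eqref{eq:defining.proportionality.R_n} of $a$ reduces to $\eta_0^{m+1}\leq 1$. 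Combined with \eqref{eq:improvement.of.oscillation.parameter.estimate}, this gives \eqref{eq:cond.oscillation.recursion.hypothesis} at level $n+1$. The inclusion \eqref{eq:cond.intrinsic.scaled.cylinder.in.domain} at $n+1$ follows from $Q_{\omega_{n+1}}(2R_{n+1})\subseteq Q_{\omega_n}(R_n)\subseteq\Omega_T$ by a similar scale check ($2a\leq 1$ spatially, $4a^2\eta_0^{1-m}\leq 1$ temporally). For \eqref{eq:oscillation.Really.Large} at $n+1$, it suffices to have $\eta_0^m\geq a$, which is built into \eqref{eq:defining.proportionality.R_n} through the factor $\tfrac12\sqrt{\nu_0/2}<1$.

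Once propagation is established, the two alternatives follow immediately. In case (i), $\essosc_{Q_n}\solA\leq \omega_n = \eta_0^n\omega_0$, and the identity $R_n/R_0 = a^n$ with $\alpha=\log\eta_0/\log a$ turns this into \eqref{eq:oscillation.intrisinc.scales.with.cylinder}. In case (ii), the same argument gives \eqref{eq:oscillation.intrisinc.scales.with.cylinder} for all $n\leq n_*$, and the stopping criterion is exactly the negation of \eqref{eq:cond.inf.small} at $n_*$, namely \eqref{eq:cond.inf.large}.

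The main obstacle is purely algebraic: the constant $a$ must be small enough for both the spatial and temporal cylinder inclusions to close up, yet large enough (relative to $\eta_0^m$) to preserve \eqref{eq:oscillation.Really.Large}. The specific product $\tfrac12\sqrt{\nu_0/2}\,\eta_0^m$ in \eqref{eq:defining.proportionality.R_n} is exactly the value at which both constraints become simultaneously compatible, with the factor $\eta_0^m$ carrying the scaling that connects the oscillation decay rate $\eta_0$ to the radius contraction rate $a$ through the H\"older exponent $\alpha$ in \eqref{eq:definition.holder.exp}.
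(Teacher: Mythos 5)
Your proposal is correct and follows essentially the same strategy as the paper's proof: iterate the De Giorgi-type lemma, verify that the hypotheses \eqref{eq:cond.intrinsic.scaled.cylinder.in.domain}, \eqref{eq:cond.oscillation.recursion.hypothesis} and \eqref{eq:oscillation.Really.Large} propagate via the cylinder inclusions, and stop at the first index where \eqref{eq:cond.inf.small} fails. The only cosmetic difference is in which nested inclusion you use to verify \eqref{eq:cond.intrinsic.scaled.cylinder.in.domain} at the next level (you use $Q_{\omega_{n+1}}(2R_{n+1})\subseteq Q_{\omega_n}(R_n)$ while the paper uses the scaled inclusion $Q_{\omega_{n+1}}(2R_{n+1})\subseteq Q_{\omega_n}(2R_n)$), and you leave the check $\alpha\in(0,1)$ implicit, which amounts to $a<\eta_0<1$.
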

\begin{proof}
	By assumption, $\omega_0$ and $R_0$ satisfy \eqref{eq:cond.intrinsic.scaled.cylinder.in.domain}, \eqref{eq:cond.oscillation.recursion.hypothesis} and \eqref{eq:oscillation.Really.Large}.
	Suppose that $\omega_0$ and $R_0$ satisfy \eqref{eq:cond.inf.small} as well, because otherwise case \textit{(ii)} holds and there is nothing left to prove.
	
	First, we show that \eqref{eq:cond.intrinsic.scaled.cylinder.in.domain}, \eqref{eq:cond.oscillation.recursion.hypothesis} and \eqref{eq:oscillation.Really.Large} hold for $\omega_1$ and $R_1$.
	Certainly $R_1\leq R_2$ and we observe that
	\[
	\omega_1^{1-m}R_1^2\leq ({\eta_0}\omega_0)^{1-m}a^2R_0^2 = \frac{\nu_0}{2} \eta_0^{m+1} \omega_0^{1-m}\left(\frac{R_0}{2}\right)^2\leq \frac{\nu_0}{2}\omega_0^{1-m}\left(\frac{R_0}{2}\right)^2,
	\]
	hence $Q_{1}\subseteq Q^{\nu_0}_{\omega_0}\left(\frac{R_0}{2}\right)\subseteq Q_{0}$.
	Replacing $R_0$ and $R_1$ by $2R_0$ and $2R_1$ shows that $Q_{\omega_1}(2R_1)\subset Q_{\omega_0}(2R_0)$ and therefore \eqref{eq:cond.intrinsic.scaled.cylinder.in.domain} holds.
	Additionally, by \Cref{lem:DeGiorgi-type} we know that
	\[
	\essosc_{Q^{\nu_0}_{\omega_0}\left(\frac{R_0}{2}\right)}\solA \stackrel{\eqref{eq:improvement.of.oscillation.parameter.estimate}}{\leq} h\omega_0\leq\omega_1,
	\]
	so $\essosc_{Q_{1}}\solA\leq \omega_{1}$, i.e.\ \eqref{eq:cond.oscillation.recursion.hypothesis} holds.
	Finally, we compute that
	\[
	R_1=\frac{1}{2}\sqrt{\frac{\nu_0}{2}} \eta_0^{m}R_0\leq \eta_0^{m}R_0 \stackrel{\eqref{eq:oscillation.Really.Large}}{\leq} \eta_0^{m}\omega_0^{m}=\omega_1^{m},
	\]
	so \eqref{eq:oscillation.Really.Large} is valid for $\omega_1$ and $R_1$ as well.
	
	We can continue the arguments recursively as long as \eqref{eq:cond.inf.small} holds at each step, where we redefine
	\[
	\mu_-=\essinf_{Q_{n-1}}\solA
	\]
	at each step $n$.
	If this is the case, then we obtain the inclusions $Q_n\subset Q_{n-1}\subset\dots\subset Q_0\subset\Omega_T$ and the estimates $\essosc_{Q_{n}}\solA\leq \omega_n$ for each $n$ and it follows that
	\[
	\essosc_{Q_{n}}\solA\leq \omega_n=\eta_0^n\omega_0=\omega_0\left(\frac{R_n}{R_0}\right)^\alpha.
	\]
	Here, we used that $\left(R_n/R_0\right)^{\alpha}=a^{n\alpha}= \eta_0^n$.
	Indeed, $\alpha$ is given by \eqref{eq:definition.holder.exp}, hence $a^\alpha=\eta_0$.
	Moreover, $\alpha\in(0,1)$ due to $\frac{3}{4}\leq \eta_0 \leq 1$ and $\frac{1}{2}\sqrt{\frac{\nu_0}{2}} \eta_0^{m}\leq \frac{3}{4}$.
	This concludes case \textit{(i)} and the first statement of case \textit{(ii)}.
	
	Suppose that \eqref{eq:cond.inf.small} fails to hold at step $n_*+1$, then 
	\[
	\essinf_{Q_{n_*}}\solA\geq \frac{1}{4}\omega_{n_*},
	\]
	i.e.\ case \textit{(ii)} holds.
\end{proof}
\begin{remark}
	The factor $a$ given in \eqref{eq:defining.proportionality.R_n} is proportional to $\eta_0^m$, which differs from the factor used for the porous medium equation, where it is proportional to $\eta_0^{m-1}$.
	This difference is due to the assumption \eqref{itm:source.Lipschitz.assumption} on the reaction term.
	Indeed, we define $a$ by \eqref{eq:defining.proportionality.R_n} to make sure \eqref{eq:oscillation.Really.Large} is satisfied in each step of the iterative scheme and condition \eqref{eq:oscillation.Really.Large} is introduced due to this growth assumption.
	We could have picked $a$ proportional to $\eta_0^{m-1}$ and checked whether \eqref{eq:oscillation.Really.Large} holds at each step, but \eqref{eq:defining.proportionality.R_n} simplifies the arguments.
\end{remark}

To obtain the appropriate estimates for the oscillation of $\solA$ in $Q_{n_*}$ in case \textit{(ii)} of \Cref{prop:iterative.scheme} we use known results for uniformly parabolic quasi-linear equations to replace the role of \Cref{lem:DeGiorgi-type}.
This is possible, because \eqref{eq:cond.inf.large} implies that $u$ is bounded away from $0$ in $Q_{n_*}$, so \eqref{eq:SD-PME} is a non-degenerate quasilinear parabolic equation in $Q_{n_*}$.
First, we introduce a change of variables (stretching the space variable) to rewrite \eqref{eq:SD-PME} as a uniformly parabolic equation whose ellipticity condition does not depend on $\solA$ (through $\mu_{\pm}$ or $\omega_{n_*}$).
We set
\[
\bar{x}= \frac{x}{\sigma^{1/2} \rho},\quad \bar{t}= \frac{t}{\sigma},
\]
where $\rho^2:=\mu_-^{m-1}$, $\sigma:=\mu_-^{m_0}$ and $\mu_-:=\essinf_{Q_{n_*}}\solA\geq \frac{1}{4}\omega_{n_*}$ due to \eqref{eq:cond.inf.large}.
We also write $\bar{\solA}(\bar{x},\bar{t})=\solA(x,t)=\solA(\sigma^{1/2}\rho\bar{x},\sigma t)$, $\bar{f}(\bar{x},\bar{t},z)=f(x,t,z)=f(\sigma^{1/2}\rho\bar{x},\sigma t,z)$ and $\bar{R}= \sigma^{-1/2}\rho^{-1} {R}$, so that the cylinder $Q(R_1,R_2)$ transforms into $\bar{Q}(R_1,R_2)$ $:=$ $B_{\bar{R}_1}\times(-\sigma^{-1} R_2^2,0)$ $=$ $B_{\bar{R}_1}\times(-\rho^2 \bar{R}_2^2,0)$.
Moreover, \eqref{eq:SD-PME} transforms into $\sigma^{-1}\bar{\solA}_{\bar{t}}=\sigma^{-1}\rho^{-2}\Delta_{\bar{x}}\nonlin(\bar{\solA})+\bar{\source}(\emptyarg,\bar{\solA})$, which we multiply by $\sigma$ to obtain the equation
\begin{equation}\label{eq:SD-PME.stretched.space.variable}
	\bar{\solA}_{\bar{t}} = \frac{1}{\rho^2} \Delta_{\bar{x}}\nonlin(\bar{\solA})+\sigma\bar{\source}(\emptyarg,\bar{\solA}).
\end{equation}
The uniformly parabolic equation \eqref{eq:SD-PME.stretched.space.variable} has an ellipticity condition that only depends on $c_1$, $c_2$ and $m$, because
\[
\frac{\nonlin'(\bar{\solA})}{\rho^2}\geq c_1\frac{\mu_-^{m-1}}{\rho^2}= c_1,
\]
and
\begin{align*}
	\frac{\nonlin'(\bar{\solA})}{\rho^2}&\leq c_2\frac{\mu_+^{m-1}}{\rho^2}\leq c_2 5^{m-1}\frac{\mu_-^{m-1}}{\rho^2}=5^{m-1}c_2,
\end{align*}
where we used that $\mu_+:=\esssup_{Q_{n_*}}\solA\leq \omega_{n_*}+\mu_-\leq 5\mu_-$ due to \eqref{eq:cond.inf.large}.
Moreover, the reaction term in \eqref{eq:SD-PME.stretched.space.variable} is uniformly bounded, because 
\[
|\sigma{\bar{\source}(\emptyarg,\bar{\solA})} | \leq \LipBoundSource \sigma\mu_-^{-m_0}=\LipBoundSource
\]
due to \eqref{itm:source.Lipschitz.assumption}.
We can now apply known results on uniformly parabolic quasi-linear equations to \eqref{eq:SD-PME.stretched.space.variable}.
In particular, from \cite{lady1968}, Lemma 7.4 of Chapter II, page 119, we obtain the following.
\begin{lemma}\label{lem:Ladyzenskaja.non.deg.improv.osc}
	Suppose statement \textit{(ii)} of \Cref{prop:iterative.scheme} holds.
	Then there exist constants $\theta,\eta\in(0,1)$ depending only on $N$, $c_1$, $c_2$, $m$ and $\LipBoundSource$ such that for any $\bar{R}>0$ with
	\[
	Q^\theta(\bar{R}):=Q(\bar{R},\theta\bar{R}^2)\subseteq \bar{Q}_{n_*}
	\]
	at least one of the following estimates is valid: either
	\begin{equation}\label{eq:Ladyzenskaja.non.deg.improv.osc.case1}
		\essosc_{Q^\theta(\frac{\bar{R}}{4})} \bar{\solA} \leq 2(1-\eta)^{-1} \bar{R}
	\end{equation}
	or
	\begin{equation}\label{eq:Ladyzenskaja.non.deg.improv.osc.case2}
		\essosc_{Q^\theta(\frac{\bar{R}}{4})} \bar{\solA} \leq \eta \essosc_{Q^\theta(\bar{R})}\bar{\solA}.
	\end{equation}
	Moreover, $\theta$ may be chosen arbitrarily small (in particular, $\theta\leq 4^{1-m}$).
\end{lemma}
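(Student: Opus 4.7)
The plan is to treat \Cref{lem:Ladyzenskaja.non.deg.improv.osc} as essentially a direct application of a classical improvement-of-oscillation estimate for uniformly parabolic quasi-linear equations, once it is verified that the rescaled equation \eqref{eq:SD-PME.stretched.space.variable} satisfies the required structural hypotheses of \cite[Chapter II, Lemma 7.4]{lady1968} with constants depending only on $N$, $c_1$, $c_2$, $m$ and $\LipBoundSource$. The argument therefore has two components: a reduction step (checking the structural conditions hold in the transformed variables) and a citation step (invoking the classical result).

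The first thing I would do is verify that $\bar{\solA}$, defined on $\bar{Q}_{n_*}$ via the stretching $\bar{x}=x/(\sigma^{1/2}\rho)$, $\bar{t}=t/\sigma$, is a local weak solution of \eqref{eq:SD-PME.stretched.space.variable} in the analogue of the sense of \Cref{def:local.solution}. This is a routine change-of-variables computation: the linear rescaling preserves Sobolev regularity, the pairing with $H^1_0$-test functions transforms compatibly, and because on $Q_{n_*}$ the lower bound \eqref{eq:cond.inf.large} and the uniform upper bound $\mu_+\leq 5\mu_-$ place $\bar{\solA}$ inside a compact subinterval of $(0,1)$ on which $\nonlin'$ is bounded above and bounded away from zero, the identity $\nabla\bar{\solA}=\nonlin'(\bar{\solA})^{-1}\nabla\nonlin(\bar{\solA})$ shows that $\bar{\solA}\in L^2_\loc(H^1_\loc)$. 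In particular, on $\bar Q_{n_*}$ the equation is genuinely non-degenerate and we recover the full classical regularity expected in \cite{lady1968}.

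Next, I would rewrite \eqref{eq:SD-PME.stretched.space.variable} in divergence form $\bar{\solA}_{\bar{t}}=\mathrm{div}_{\bar{x}}(A(\bar{x},\bar{t},\bar{\solA})\nabla_{\bar{x}}\bar{\solA})+B(\bar{x},\bar{t},\bar{\solA})$ with $A(\bar{x},\bar{t},z)=\rho^{-2}\nonlin'(z)$ and $B(\bar{x},\bar{t},z)=\sigma \bar{\source}(\bar{x},\bar{t},z)$. The computations already carried out in the text establish the uniform ellipticity $c_1\leq A\leq 5^{m-1}c_2$ together with $|B|\leq\LipBoundSource$, with all three constants depending only on $c_1$, $c_2$, $m$ and $\LipBoundSource$. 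These are precisely the structural hypotheses of \cite[Chapter II, \S 7]{lady1968}, so the cited Lemma 7.4 applies on any cylinder $Q^\theta(\bar{R})\subseteq \bar{Q}_{n_*}$ and produces constants $\theta,\eta\in(0,1)$ depending only on $N$, $c_1$, $c_2$, $m$ and $\LipBoundSource$ for which the dichotomy \eqref{eq:Ladyzenskaja.non.deg.improv.osc.case1}--\eqref{eq:Ladyzenskaja.non.deg.improv.osc.case2} holds. The linear-in-$\bar{R}$ alternative \eqref{eq:Ladyzenskaja.non.deg.improv.osc.case1} reflects the bounded forcing $|B|\leq \LipBoundSource$, while the geometric-decay alternative \eqref{eq:Ladyzenskaja.non.deg.improv.osc.case2} is the usual consequence of the weak Harnack-type machinery in LSU.

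The final assertion that $\theta$ can be taken arbitrarily small (in particular $\theta\leq 4^{1-m}$) is secured by inspecting the proof in \cite{lady1968}: the value of $\theta$ only fixes the temporal extent of the cylinder within which the De Giorgi--type iteration is run, and any smaller $\theta$ is admissible without altering $\eta$ or any of the structural constants. The main obstacle I anticipate is not technical depth but rather the bookkeeping of the first step, namely checking that our weak-solution concept (with $\solA_t$ interpreted as an $H^{-1}$-valued functional) transports through the change of variables into the classical weak-solution concept used in LSU. This reduces to the observation, already noted above, that non-degeneracy on $\bar Q_{n_*}$ upgrades the spatial regularity of $\bar{\solA}$ to $L^2_\loc(H^1_\loc)$, after which the two solution concepts coincide.
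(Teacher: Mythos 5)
Your proposal is correct and follows essentially the same route as the paper: the paper derives the rescaled equation \eqref{eq:SD-PME.stretched.space.variable}, verifies the uniform ellipticity bounds $c_1 \leq \nonlin'(\bar{\solA})/\rho^2 \leq 5^{m-1}c_2$ and the forcing bound $|\sigma\bar{\source}|\leq\LipBoundSource$ in the paragraphs directly above the lemma, and then cites \cite[Ch.~II, Lemma~7.4]{lady1968} without further argument. Your added remark about transporting the $H^{-1}$-valued weak-solution concept into the classical one via the non-degeneracy on $\bar Q_{n_*}$ is a reasonable bookkeeping observation that the paper leaves implicit.
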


\begin{remark}
	Historically, the intrinsic scaling introduced in \cite{DiBen1984} was inspired by \Cref{lem:Ladyzenskaja.non.deg.improv.osc} and it leads to such a result for degenerate parabolic equations.
	Indeed, the De Giorgi-type Lemma and the subsequent estimate \eqref{eq:improvement.of.oscillation.parameter.estimate} provide the analogous result, where \eqref{eq:improvement.of.oscillation.parameter.estimate} and the negation of \eqref{eq:oscillation.Really.Large} correspond to  \eqref{eq:Ladyzenskaja.non.deg.improv.osc.case2} and  \eqref{eq:Ladyzenskaja.non.deg.improv.osc.case1}, respectively. 
\end{remark}

We consider an iterative scheme for case \textit{(ii)} of \Cref{prop:iterative.scheme}, where \Cref{lem:Ladyzenskaja.non.deg.improv.osc} replaces the role of \Cref{lem:DeGiorgi-type}.
We take $R_0$ and $\omega_0$ as in \Cref{lem:DeGiorgi-type} and we define decreasing sequences $\{R_k\}_{k=1}^\infty$ and $\{\omega_k\}_{k=1}^\infty$ recursively by
\begin{equation}\label{eq:sequences.R_k.and.omega_k}
	R_{k+1}=\left\{ \begin{array}{rl}
		aR_{k}		&\quad\text{if}\ k< n_*,\\
		\frac{1}{4}R_k	&\quad\text{if}\ k\geq n_*,
	\end{array}\right.
	\quad
	\omega_{k+1}=\left\{ \begin{array}{rl}
		\eta_0 \omega_{k}		&\quad\text{if}\ k< n_*,\\
		\eta \omega_k	&\quad\text{if}\ k\geq n_*,
	\end{array}\right.
\end{equation}
where $a$ and $\eta_0$ are given by \eqref{eq:defining.proportionality.R_n} and \eqref{eq:defining.improvement.of.oscillation.parameter}, respectively.
Recall that $\rho^2=\mu_-^{m-1}\geq 4^{1-m}\omega_{n_*}^{m-1}$ due to \eqref{eq:cond.inf.large}, so
\[
\bar{Q}_{n_*}=B_{\bar{R}_{n_*}}\times(-\omega_{n_*}^{1-m}\rho^2\bar{R}_{n_*}^2,0)\supset B_{\bar{R}_{n_*}}\times(-4^{1-m}\bar{R}_{n_*}^2,0)\supset Q^\theta(\bar{R}_{n_*}),
\]
where we assume that $\theta\leq 4^{1-m}$.
Therefore, if \eqref{eq:Ladyzenskaja.non.deg.improv.osc.case2} holds for $\bar{R}_{n_*}$, then
\[
\essosc_{Q^\theta(\bar{R}_{n_*+1})}\bar{u}\leq\eta\essosc_{\bar{Q}_{n_*}}\bar{u}\leq \eta\omega_{n_*}=\omega_{n_*+1}.
\]
If \eqref{eq:Ladyzenskaja.non.deg.improv.osc.case1} holds for $\bar{R}_k$, $k\geq n_*$, then
\[
\essosc_{Q^\theta(\bar{R}_{{k+1}})}\bar{u}\leq2(1+\eta^{-1}) \bar{R}_{k}\leq \frac{8}{\sigma^{1/2} \rho}(1+\eta^{-1})R_{k+1}\leq C R_{n_*}^{-\frac{m-1+m_0}{2m}}R_{k+1}\leq C R_{k+1}^{1-\frac{m-1+m_0}{2m}},
\]
where we used that $\sigma^{1/2}\rho\geq 2^{-(m-1+m_0)}\omega_{n_*}^{({m-1+m_0})/{2}}\geq 2^{-(m-1+m_0)}R_{n_*}^{({m-1+m_0})/{2m}}$ due to \eqref{eq:cond.inf.large} and \eqref{eq:oscillation.Really.Large} and where we absorbed some factors into the constant $C$.
Let us define $\tilde{\alpha}=\min\left\{ -\log_4(\eta), 1-\frac{m-1+m_0}{2m}, \alpha \right\}$ and update $\alpha=\tilde{\alpha}$.
Combining \eqref{eq:Ladyzenskaja.non.deg.improv.osc.case1} and \eqref{eq:Ladyzenskaja.non.deg.improv.osc.case2}, for $k>n_*$ we now have
\begin{align*}
	\essosc_{Q^\theta(\bar{R}_{k})} \bar{\solA} &\leq \max\left\{CR_k^{\alpha},\omega_k\right\}=\max\left\{CR_k^{\alpha},4^{-(k-n_*)}a^{n_*}\omega_0\right\}\\
	&\leq\max\left\{ CR_k^{\alpha},\omega_0\left(\frac{{R}_k}{{R}_{n_*}}\right)^\alpha\left(\frac{{R}_{n_*}}{{R}_{0}}\right)^\alpha\right\} = C \left( \frac{{R}_k}{{R}_{0}} \right)^\alpha. \label{eq:Ladyzenskaja.non.deg.improv.osc.max}
\end{align*}
Transforming $Q^\theta(\bar{R}_{k})$ back to the original coordinates gives $Q(R_k,\theta
\rho^{-2}R_k^2)$, which certainly contains $Q^\theta(R_k)$.
Therefore,
\begin{equation}
	\essosc_{Q^\theta({R}_{k})} {\solA} \leq C(N,c_1,c_2,m,\LipBoundSource,m_0)\left(\frac{{R}_k}{{R}_{0}}\right)^\alpha	\label{eq:Ladyzenskaja.non.deg.oscillation.scales.with.cylinder}
\end{equation}
for all $k>n_*$.
Moreover, observe that $Q^\theta({R}_{k})\subset Q_{\omega_k}(R_k)$ for any $k\in\IN$, so \eqref{eq:oscillation.intrisinc.scales.with.cylinder} implies that \eqref{eq:Ladyzenskaja.non.deg.oscillation.scales.with.cylinder} is valid for all $k\in\IN$.

We collect both cases of \Cref{prop:iterative.scheme} and our conclusion \eqref{eq:Ladyzenskaja.non.deg.oscillation.scales.with.cylinder} in the following result.
\begin{corollary}\label{prop:iterative.scheme.Continuous.version}
	Suppose $R\in(0,R_{\mathrm{max}}]$ and $\omega>0$ such that \eqref{eq:cond.intrinsic.scaled.cylinder.in.domain}, \eqref{eq:cond.oscillation.recursion.hypothesis} and \eqref{eq:oscillation.Really.Large} hold.
	Then at least one of the following estimates holds:
	\begin{align}
		\essosc_{Q_\omega(r)}\solA&\leq C(N,c_1,c_2,m,\LipBoundSource,m_0)\,\omega\left(\frac{r}{R}\right)^{\alpha} & & \text{for all}\ r\in[0,R], \label{eq:oscillation.bounded.r/R} \\
		\essosc_{Q^\theta(r)}{\solA}&\leq C(N,c_1,c_2,m,\LipBoundSource,m_0)\,\left(\frac{r}{R}\right)^\alpha & & \text{for all}\ r\in[0,R], \label{eq:oscillation.bounded.r/R.non-degen}
	\end{align}
	where $\theta$ is given in \Cref{lem:Ladyzenskaja.non.deg.improv.osc} and $\alpha>0$ depends on $N$, $c_1$, $c_2$, $m$, $\LipBoundSource$ and $m_0$.
\end{corollary}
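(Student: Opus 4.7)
The plan is to obtain \Cref{prop:iterative.scheme.Continuous.version} by interpolating the discrete oscillation estimates supplied by \Cref{prop:iterative.scheme} (and by the non-degenerate analysis preceding the corollary) to all radii $r\in[0,R]$. Concretely, \Cref{prop:iterative.scheme} splits the situation into case \textit{(i)}, where the intrinsic iteration can be continued indefinitely so that \eqref{eq:oscillation.intrisinc.scales.with.cylinder} holds for every $n\in\IN$, and case \textit{(ii)}, where the iteration terminates at some $n_*$ after which the non-degenerate estimate \eqref{eq:Ladyzenskaja.non.deg.oscillation.scales.with.cylinder} becomes available for all $k\in\IN$. These two cases correspond respectively to the two alternatives \eqref{eq:oscillation.bounded.r/R} and \eqref{eq:oscillation.bounded.r/R.non-degen}, so the task reduces to upgrading each discrete bound to a bound valid for every $r\in[0,R]$.

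In case \textit{(i)} I would fix $r\in(0,R]$ and choose $n\in\IN$ with $R_{n+1}\leq r<R_n$; this is possible because $R_n\to 0$. Since $\{\omega_n\}$ is non-increasing and $1-m<0$, the scaling factor satisfies $\omega_n^{1-m}\geq \omega^{1-m}$, which yields the chain of inclusions $Q_\omega(r)\subseteq Q_\omega(R_n)\subseteq Q_{\omega_n}(R_n)$. Combining this with \eqref{eq:oscillation.intrisinc.scales.with.cylinder} and the comparison $R_n=R_{n+1}/a\leq r/a$ produces
\[
\essosc_{Q_\omega(r)}\solA \leq \omega\left(\frac{R_n}{R_0}\right)^\alpha \leq \frac{\omega}{a^\alpha}\left(\frac{r}{R}\right)^\alpha,
\]
which is \eqref{eq:oscillation.bounded.r/R} after absorbing $a^{-\alpha}$ into the constant $C$.

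In case \textit{(ii)} the discussion before the corollary already establishes \eqref{eq:Ladyzenskaja.non.deg.oscillation.scales.with.cylinder} for every $k\in\IN$, so the same interpolation strategy applies to the cylinders $Q^\theta(\cdot)$. Given $r\in(0,R]$, pick $k$ with $R_{k+1}\leq r<R_k$; the contraction ratio $R_{k+1}/R_k$ equals $a$ for $k<n_*$ and $\tfrac14$ for $k\geq n_*$, and is therefore bounded below by a structural constant. The inclusion $Q^\theta(r)\subseteq Q^\theta(R_k)$ together with \eqref{eq:Ladyzenskaja.non.deg.oscillation.scales.with.cylinder} and the estimate $R_k\leq r/\min(a,\tfrac14)$ then gives \eqref{eq:oscillation.bounded.r/R.non-degen}.

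The only conceptual subtlety, and the point worth stating carefully, is the switch between the two cylinder geometries $Q_\omega(\cdot)$ and $Q^\theta(\cdot)$ when bridging the intrinsically scaled regime and the non-degenerate one in case \textit{(ii)}. This is resolved by the containment $Q^\theta(R_k)\subseteq Q_{\omega_k}(R_k)$ already recorded in the text, which allows the iterates with $k\leq n_*$ to contribute to \eqref{eq:oscillation.bounded.r/R.non-degen} via the discrete intrinsic bound, while the iterates with $k>n_*$ contribute via the non-degenerate bound \eqref{eq:Ladyzenskaja.non.deg.oscillation.scales.with.cylinder}. No further inputs are required beyond what has already been assembled, so the proof is essentially a bookkeeping exercise in combining monotonicity of oscillation with the geometric decay of the radii $R_n$.
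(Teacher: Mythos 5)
Your proposal is correct and follows essentially the same route as the paper: fix $r$, locate it between consecutive discrete radii $R_{n+1}$ and $R_n$, use the nesting of the cylinders to pass from $Q_\omega(r)$ (resp.\ $Q^\theta(r)$) to $Q_n$ (resp.\ $Q^\theta(R_k)$), and then absorb the ratio $R_n/R_{n+1}$ into the constant. The observation $Q^\theta(R_k)\subseteq Q_{\omega_k}(R_k)$ that you highlight as the bridging subtlety is indeed the point the paper also records just before the corollary, so no new ingredients are used.
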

\begin{proof}
	The statements hold trivially for $r=0$, so assume $r>0$.
	Set $R_0=R$, $\omega_0=\omega$, recall that $a$ and $\eta_0$ are defined in \eqref{eq:defining.proportionality.R_n} and \eqref{eq:defining.improvement.of.oscillation.parameter}, respectively, and define $R_n$, $\omega_n$ and $Q_n$ as in \Cref{prop:iterative.scheme}.
	Next, let $r\in (0,R]$ and pick $n\in\IN$ such that $R_{n+1}\leq r\leq R_n$.
	Note that $Q_{\omega}(r)\subseteq Q_{n}$, because $\omega\geq \omega_n$ and $r\leq R_n$, so $\omega^{1-m}r\leq \omega_{n}^{1-m}R_{n}$.
	
	Suppose case \textit{(i)} of \Cref{prop:iterative.scheme} holds, then it follows that
	\[
	\essosc_{Q_{\omega}(r)}\solA\leq \essosc_{Q_{n}}\solA\leq\omega\left(\frac{R_{n}}{R}\right)^\alpha=a^{-\alpha} \omega \left(\frac{R_{n+1}}{R}\right)^\alpha\leq C\,\omega \left(\frac{r}{R}\right)^\alpha,
	\]
	where $C=a^{-\alpha}$.
	
	Suppose case \textit{(ii)} of \Cref{prop:iterative.scheme} holds.
	Define $R_k$ and $\omega_k$ by \eqref{eq:sequences.R_k.and.omega_k}.
	Pick $k\in \IN$ such that $R_{k+1}\leq r< R_{k}$ and note that $Q^\theta(r)\subseteq Q^\theta(R_k)$.
	Then \eqref{eq:Ladyzenskaja.non.deg.oscillation.scales.with.cylinder} implies
	\begin{equation*}
		\essosc_{Q^\theta(r)}{\solA}\leq \essosc_{Q^\theta(R_k)}{\solA} \leq C\,\left(\frac{R_k}{R}\right)^\alpha\leq C\,\left(\frac{R_{k+1}}{R}\right)^\alpha\leq C\,\left(\frac{r}{R}\right)^\alpha,
	\end{equation*}
	where we absorbed the factor $\max\left\{ a^{-\alpha},4^{\alpha} \right\}$ into the constant $C$.
\end{proof}

Now we are in a position to prove the main result.
\begin{proof}[Proof of \Cref{thm:holder}]
	Let $K\subset\Omega_T$ be compact and let $(x_0,t_0),(x_1,t_1)\in K$.
	Assume $t_0\neq t_1$ and suppose, without loss of generality, that $t_0>t_1$.
	Let $R>0$ and consider the cylinder
	\[
	Q=(x_0,t_0)+Q(R,R^2).
	\]
	Let us pick
	\[
	2R=R_{\mathrm{max}}\cdot d(K;\Gamma)
	\]
	so that $Q\subset (x_0,t_0)+Q(2R,(2R)^2)\subseteq\Omega_T$ and $R\in(0,R_{\mathrm{max}}]$. 
	We consider the following two cases.
	
	$\bullet$ Suppose $(x_1,t_1)\notin Q$, then
	\[
	\abs{x_0-x_1}+\abs{t_0-t_1}^{\frac{1}{2}}\geq R,
	\]
	hence
	\begin{align*}
		\abs{\solA(x_0,t_0)-\solA(x_1,t_1)}	
		&\leq 2\leq 2 \left(\frac{\abs{x_0-x_1}+\abs{t_0-t_1}^{\frac{1}{2}}}{R}\right) \leq 2 \left(\frac{\abs{x_0-x_1}+\abs{t_0-t_1}^{\frac{1}{2}}}{R}\right)^\alpha\\
		&\leq C\left(\frac{\abs{x_0-x_1}+\abs{t_0-t_1}^{\frac{1}{2}}}{d(K;\Gamma)}\right)^\alpha,
	\end{align*}
	where $C={2} / {R_{\mathrm{max}}}$.
	
	$\bullet$ Suppose $(x_1,t_1)\in Q$.
	Set $\omega=1$, then, by the choice of $R$, \eqref{eq:cond.intrinsic.scaled.cylinder.in.domain} is satisfied for the cylinder $(x_0,t_0)+Q_\omega(R)=Q$.
	Moreover, \eqref{eq:cond.oscillation.recursion.hypothesis} and \eqref{eq:oscillation.Really.Large} hold trivially, so the hypotheses of \Cref{prop:iterative.scheme.Continuous.version} are satisfied.
	Suppose \eqref{eq:oscillation.bounded.r/R} holds.
	Define $r\in(0,R]$ by
	\[
	r=\max\left\{\abs{x_0-x_1},\abs{t_0-t_1}^{\frac{1}{2}}\right\},
	\]
	then $(x_1,t_1)\in (x_0,t_0)+Q_\omega(r)\subseteq Q$ and \eqref{eq:oscillation.bounded.r/R} implies that
	\begin{align*}
		\abs{\solA(x_0,t_0)-\solA(x_1,t_1)}&\leq \essosc_{Q_{\omega}(r)} \solA\leq C\left(\frac{r}{R}\right)^\alpha\leq C\left(\frac{\abs{x_0-x_1}+\abs{t_0-t_1}^{\frac{1}{2}}}{d(K;\Gamma)}\right)^{\alpha}.
	\end{align*}
	Suppose \eqref{eq:oscillation.bounded.r/R.non-degen} holds.
	If $(x_1,t_1)\notin Q^\theta(R)$, then 
	\[
	\abs{x_0-x_1}+\theta^{-\frac{1}{2}}\abs{t_0-t_1}^\frac{1}{2}\geq R,
	\]
	hence, as before,
	\begin{align*}
		\abs{\solA(x_0,t_0)-\solA(x_1,t_1)}	&\leq 2\leq 2 \left(\frac{\abs{x_0-x_1}+\theta^{-\frac{1}{2}}\abs{t_0-t_1}^{\frac{1}{2}}}{R}\right)\\
		&\leq 2\theta^{-\frac{1}{2}}\left(\frac{\abs{x_0-x_1}+\abs{t_0-t_1}^{\frac{1}{2}}}{d(K;\Gamma)}\right)^{\alpha}.
	\end{align*}
	Suppose $(x_1,t_1)\in Q^\theta(R)$ and define $r\in(0,R]$ by
	\[
	r=\max\left\{\abs{x_0-x_1},\theta^{-\frac{1}{2}}\abs{t_0-t_1}^{\frac{1}{2}}\right\},
	\]
	then $(x_1,t_1)\in (x_0,t_0)+Q^\theta(r)\subseteq Q$ and \eqref{eq:oscillation.bounded.r/R.non-degen} implies
	\begin{align*}
		\abs{\solA(x_0,t_0)-\solA(x_1,t_1)}&\leq \essosc_{Q^\theta(r)} \solA\leq C\left(\frac{r}{R}\right)^\alpha\leq C\left(\frac{\abs{x_0-x_1}+\abs{t_0-t_1}^{\frac{1}{2}}}{d(K;\Gamma)}\right)^{\alpha},
	\end{align*}
	where we absorbed $\theta^{-\frac{\alpha}{2}}$ into $C$.
	
	Finally, suppose $t_0=t_1$.
	Pick any compact set $K'\supset K$ such that $d(K';\Gamma)\geq \frac{1}{2}d(K;\Gamma)$ and such that $\{x_0,x_1\}\times[t_0,t_0+\tau]\subset K'$ for some $\tau>0$.
	Pick a sequence $\{\tau_n\}_{n=1}^\infty$ in $(t_0,t_0+\tau]$ with $\tau_n\to t_0$.
	\Cref{thm:holder} holds for each pair $(x,t)$ and $(y,s)$, $t\neq s$, in the compact set $K'$, hence $\lim_{n\to\infty}\solA(x_i,\tau_n)=\solA(x_i,t_0)$, $i=0,1$.
	In the limit $n\to\infty$ we obtain \eqref{eq:holder} for $(x_0,t_0), (x_1,t_0)\in K$, where we absorbed the factor $2^\alpha$ into $C$.
\end{proof}

\section{Interior integral estimates and technical lemma's}\label{sect:Int.est.and.Aux}
In this section we show auxiliary lemma's and discuss known technical results we use in the proof of \Cref{lem:DeGiorgi-type} in \Cref{sect:Proof.of.DeGiorgiLemma}.
First, we prove a chain rule for the time derivative satisfied by any solution of \eqref{eq:SD-PME} in the sense of \Cref{def:local.solution}.
Next, we show two interior energy estimates and an interior logarithmic estimate involving truncations of the solution, where we use the aforementioned chain rule.
Finally, we discuss an additional functional space with a related estimate, a Poincar\'e type inequality incorporating truncated functions and a technical lemma on the convergence of certain sequences.

\subsection{Chain rule}
We prove a chain rule for the term involving the time derivative in \eqref{eq:SD-PME}.
The method we use is similar to the one used to prove Proposition 3.10 in \cite{HissMull-Sonn22}.
In the cited reference the initial data provides a key bound to pass to the limit, but here we can use the bound $\solA<1$ instead.
In this subsection we work with the original equation \eqref{eq:SD-PME}, i.e.\ we have not performed the translation of the axes mentioned in the third paragraph of \Cref{sect:Geometry.Holder}.
\begin{lemma}[Chain rule]\label{lem:chainrule_local_solution}
	Let $\solA:\Omega_T\to[0,1)$ be a measurable function such that $\solA \in W^{1,2}_\loc (0,T;H^{-1}(K))$ and $\nonlin(\solA) \in L^2_\loc(0,T;H^1(K))$ for any compact subset $K\subset\Omega$.
	Suppose $\psi:[0,\infty)\to \IR$ is a continuous piece-wise continuously differentiable function with bounded derivative and let $\zeta\in C^\infty_c(\Omega\times\IR)$ be non-negative.
	Then the mapping $t\mapsto \inprod{\Psi(\solA(t))}{\zeta^2(t)}$ is absolutely continuous on any compact subinterval of $(0,T)$ with
	\[
	\frac{\md}{\md t}\inprod{\Psi(\solA)}{\zeta^2}=\pinprod{\solA_t}{\psi(\nonlin(\solA))\zeta^2}+2\inprod{\Psi(\solA)}{\zeta\zeta_t}
	\]
	a.e.\ in $(0,T)$, where $\Psi(z):=\int_{l}^z\psi(\nonlin(\tilde{z}))\md \tilde{z}$, for some ${l}\in [0,1]$.
\end{lemma}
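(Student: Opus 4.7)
The plan is to adapt the Steklov-averaging argument from Proposition 3.10 in \cite{HissMull-Sonn22} to the present setting, where the pointwise bound $\solA<1$ takes over the role the initial datum plays in that reference. Fix a compact subinterval $[t_1,t_2]\subset(0,T)$ and a compact $K\subset\Omega$ containing $\supp_x\zeta$ in its interior. Since $\psi$ is globally Lipschitz (being continuous and piecewise $C^1$ with bounded derivative), the Sobolev chain rule for Lipschitz compositions yields $\psi(\nonlin(\solA))\in L^2(t_1,t_2;H^1(K))$ from $\nonlin(\solA)\in L^2(t_1,t_2;H^1(K))$, and multiplication by $\zeta^2\in C^\infty_c(\Omega\times\IR)$ places the product in $L^2(t_1,t_2;H^1_0(K))$. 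Thus the pairing $\pinprod{\solA_t}{\psi(\nonlin(\solA))\zeta^2}$ is well-defined and integrable in $t$, and $\Psi(\solA)\zeta^2\in L^1_\loc(\Omega_T)$ by continuity of $\Psi$ on $[0,1)$ together with $\solA\in[0,1)$.

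For small $h>0$ I introduce the Steklov average $\solA_h(t):=h^{-1}\int_t^{t+h}\solA(\tau)\,\md\tau$. Linearity gives $\solA_h(x,t)\in[0,1)$, and $\solA\in L^\infty(\Omega_T)\cap W^{1,2}_\loc(0,T;H^{-1}(K))$ yields that $\solA_h$ is absolutely continuous from $[t_1,t_2]$ into $L^2(K)$ with time derivative $(\solA_h)_t=[\solA_t]_h$ converging to $\solA_t$ in $L^2(t_1,t_2;H^{-1}(K))$. Since $\Psi\in C^1([0,1))$ and $\solA_h(x,\cdot)$ is absolutely continuous for a.e.\ $x$, the classical scalar chain rule yields the pointwise identity $\partial_\tau\Psi(\solA_h)=\psi(\nonlin(\solA_h))(\solA_h)_\tau$ a.e. Multiplying by $\zeta^2$, integrating over $K$ and over $[s,t]\subset(t_1,t_2)$ produces
\[
\Bigl[\int_K\Psi(\solA_h)\zeta^2\,\md x\Bigr]_s^t=\int_s^t\!\int_K\psi(\nonlin(\solA_h))(\solA_h)_\tau\zeta^2\,\md x\,\md\tau+2\int_s^t\!\int_K\Psi(\solA_h)\zeta\zeta_\tau\,\md x\,\md\tau.
\]

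It remains to pass to the limit $h\to 0$. Selecting $s,t$ to be Lebesgue points of $\solA$ in $L^2(K)$ (a set of full measure), one has $\solA_h(s)\to\solA(s)$ and $\solA_h(t)\to\solA(t)$ in $L^2(K)$ and a.e.\ in $K$; continuity of $\Psi$ on $[0,1)$ together with the uniform bound $\solA<1$ allows dominated convergence to dispatch the boundary terms on the left and the second integral on the right. The hard part is the middle term: because $\nonlin$ does not commute with Steklov averaging and $\solA$ itself has no spatial gradient, $\psi(\nonlin(\solA_h))\zeta^2$ a priori lacks any $H^1$-regularity, obstructing a direct strong-convergence argument. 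I bypass this by shifting the regularization onto the $H^1_0$-valued test function: the Fubini identity $\int\pinprod{[\solA_t]_h}{\varphi}\,\md\tau=\int\pinprod{\solA_t}{[\varphi]_{-h}}\,\md\tau$ with $\varphi:=\psi(\nonlin(\solA))\zeta^2\in L^2(t_1,t_2;H^1_0(K))$, combined with $[\varphi]_{-h}\to\varphi$ in $L^2(H^1_0)$ and the uniform bound $\solA_t\in L^2(H^{-1})$, identifies the target limit as $\int_s^t\pinprod{\solA_\tau}{\psi(\nonlin(\solA))\zeta^2}\,\md\tau$. The residual produced by replacing $\psi(\nonlin(\solA))$ with $\psi(\nonlin(\solA_h))$ in the test function vanishes in the limit via the Lipschitz continuity of $\psi$, the a.e.\ convergence $\solA_h\to\solA$ and the pointwise bound $\solA<1$. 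Since the resulting integrated identity holds on a dense set of pairs $(s,t)$ and its right-hand side is an absolutely continuous function of $t$, so is $t\mapsto\int_K\Psi(\solA(t))\zeta^2(t)\,\md x$, and differentiation in $t$ yields the pointwise formula claimed in the lemma. The principal obstacle is precisely the noncommutativity of $\nonlin$ with Steklov averaging, resolved by the Fubini-type self-adjointness trick above.
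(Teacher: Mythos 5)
Your Steklov-averaging strategy is a natural first attempt, but it founders on precisely the point you flag as "the hard part." After applying the scalar chain rule to $\Psi(\solA_h)$, the middle integral is $\iint \psi(\nonlin(\solA_h))\,(\solA_h)_\tau\,\zeta^2$, and you want to replace $\psi(\nonlin(\solA_h))$ by $\psi(\nonlin(\solA))$ and then apply the Fubini self-adjointness identity to the corrected term. The leading term is then handled correctly, but the residual
\[
\int_s^t\!\!\int_K\Bigl[\psi(\nonlin(\solA_h))-\psi(\nonlin(\solA))\Bigr]\,\frac{\solA(\cdot+h)-\solA}{h}\,\zeta^2
\]
is \emph{not} shown to vanish, and the reasons you cite (Lipschitz continuity of $\psi$, a.e.\ convergence $\solA_h\to\solA$, the bound $\solA<1$) do not suffice. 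This pairing must be read in $L^2\times L^2$, since the first factor has no spatial regularity. But $(\solA_h)_\tau=[\solA_t]_h$ is uniformly bounded only in $L^2(t_1,t_2;H^{-1}(K))$, not in $L^2(L^2)$; the hypothesis $\solA\in W^{1,2}_\loc(H^{-1})$ does not give an $L^2_{x,t}$ bound on difference quotients. Pointwise one only has the trivial bound $|(\solA_h)_\tau|\le 2/h$, which blows up, while the bracket tends to zero a.e.\ without any quantitative rate. No dominated-convergence, Cauchy--Schwarz, or weak-$\ast$ compactness argument closes this product of "something going to zero a.e." times "something only bounded in a negative Sobolev norm." In short: the residual is exactly as hard as the original problem, and the noncommutativity of $\nonlin$ with Steklov averaging has been displaced rather than resolved.

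The paper avoids generating this residual at all. It first reduces (by subtracting a constant and then writing $\Psi$ as a difference of two convex functions) to the case $\psi$ nondecreasing, and then uses the convexity inequalities
\[
\psi(\nonlin(z_2))(z_1-z_2)\le\Psi(z_1)-\Psi(z_2)\le\psi(\nonlin(z_1))(z_1-z_2)
\]
to sandwich $\frac1h\inprod{\Psi(\solA(t))-\Psi(\solA(t-h))}{\zeta^2}$ between $\inprod{\partial_t\solA^h(t)}{\psi(\nonlin(\solA(t)))\zeta^2}$ and $\inprod{\partial_t\solA^h(t)}{\psi(\nonlin(\solA(t-h)))\zeta^2}$. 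Both bounding quantities have $\psi\circ\nonlin$ evaluated at the \emph{unaveraged} solution (at time $t$ or $t-h$), hence lie in $L^2(t_1,t_2;H^1_0)$ and can be paired with $\partial_t\solA^h\to\solA_t$ in $L^2(H^{-1})$; continuity of the time-shift in $L^2(H^1)$ sends both to the same limit, squeezing the middle. If you wish to salvage your route, you would need the analogous structural device — a one-sided inequality valid without convexity will not do — so you should adopt the convexity decomposition rather than the Fubini-plus-residual split.
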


\begin{proof}
	Without loss of generality we may assume that $\psi(0)=0$, since the statement in linear with respect to $\psi$.
	Indeed, first suppose $\psi=c$ for some constant $c$, then $\Psi(\solA)=c(\solA-l)$ and \Cref{lem:chainrule_local_solution} is clearly valid.
	In general, if $\psi(0)=c$, then define ${\psi}_0:=\psi-c$, which satisfies $\psi_0(0)=0$.
	If we prove the proposition for $\psi_0$, then adding the constant function $c$ to $\psi_0$ shows that the result holds for $\psi$ as well.
	
	First, suppose $\Psi$ is convex, i.e.\ $\psi$ is non-decreasing, then one readily checks that
	\begin{equation}\label{eq:transformed.function.estimates}
		\psi(\nonlin(z_2))(z_1-z_2)\leq\Psi(z_1)-\Psi(z_2)\leq \psi(\nonlin(z_1))(z_1-z_2)
	\end{equation} 
	for all $z_1,z_2\in [0,1)$.
	In particular, for $z_2=0$ we have that $0\leq \Psi(z)\leq \psi(\nonlin(z))z$ for any $z\in[0,1)$.
	Moreover, $\psi'$ is bounded, so $\psi(\nonlin(\solA))\in L^2_\loc(0,T;H^1_\loc(\Omega))$ and therefore $\Psi(\solA)\in L^2_\loc(\Omega_T)$.
	
	Let $[t_1,t_2]\subset (0,T)$ be a compact subinterval.
	Given $0<h<t_1$, let $\solA^h$ denote the \emph{backward Steklov average}, that is, using the Bochner integral we define
	\[
	\solA^h(t):=\frac{1}{h}\int_{t-h}^{t}\solA(s)\md s\quad\text{for}\ t\in[t_1,t_2].
	\]
	From known results on Steklov averaging, see the Appendix in \cite{HissMull-Sonn22}, we have $\solA^h\to \solA$ in $H^1(t_1,t_2;H^{-1}({K}))$ as $h\to 0$ and $\solA^h\in H^1(t_1,t_2;L^2(K))$ with $\partial_t\solA^h(t)=\frac{1}{h}(\solA(t)-\solA(t-h)$, for any compact subset $K \subset \Omega$.
	Using the first estimate of \eqref{eq:transformed.function.estimates} and then the second estimate we obtain
	\begin{align*}
		\inprod{\partial_t\solA^h(t)}{\psi(\nonlin(\solA(t)))\zeta^2(t)}
		&=\frac{1}{h}\inprod{\solA(t)-\solA(t-h)}{\psi(\nonlin(\solA(t)))\zeta^2(t)}\\
		&\geq \frac{1}{h}\inprod{\Psi^\star(\solA(t))-\Psi^\star(\solA(t-h))}{\zeta^2(t)}\\
		&=\inprod{\partial_t[\Psi^\star(\solA(t))]^h}{\zeta^2(t)}\\
		&\geq \inprod{\partial_t\solA^h(t)}{\psi(\nonlin(\solA(t-h)))\zeta^2(t)}.
	\end{align*}
	We also note that $\inprod{\partial_t[\Psi^\star(\solA)]^h(t)}{\zeta^2(t)}=\frac{\md}{\md t}\inprod{[\Psi^\star(\solA)]^h(t)}{\zeta^2(t)}-2\inprod{[\Psi^\star(\solA)]^h(t)}{[\zeta\zeta_t](t)}$ by the product rule for the weak derivative in Bochner spaces.
	Integrate the estimates above over $t\in [t_1,t_2]$ and observe that the left- and right-hand side converge to $\int_{t_1}^{t_2}\pinprod{\solA_t}{\psi(\nonlin(\solA))\zeta^2}$ as $h\to 0$, since
	$\psi(\nonlin(\solA))\zeta^2\in L^2(t_1,t_2;H^1(\Omega))$ and 
	\[
	\int_{t_1}^{t_2}\norm{\psi(\nonlin(\solA(t-h)))\zeta^2(t)-\psi(\nonlin(\solA(t)))\zeta^2(t)}_{H^1(\Omega)}\md t\to 0
	\]
	as $h\to 0$.
	It follows that
	\begin{align*}
		\left[\inprod{\Psi^\star(\solA(t))}{\zeta^2(t)}\right]_{t_1}^{t_2}=\int_{t_1}^{t_2}\Bigl(\pinprod{\partial_t\solA}{\psi(\nonlin(\solA))\zeta^2}-2\inprod{[\Psi^\star(\solA)]}{\zeta\zeta_t}\Bigr),
	\end{align*}
	where we used that $[\Psi^\star(\solA)]^h\to \Psi^\star(\solA)$ in $L^2_\loc(\Omega_T)$ as $h\to 0$.
	This proves \Cref{lem:chainrule_local_solution} provided that $\Psi$ is convex.
	
	Finally, suppose $\Psi$ is not convex.
	Then, define $\theta(\zeta)=\int_0^\zeta(\Psi'')_-$ and $\Psi_0(\zeta)=\int_0^\zeta\theta$ for $\zeta\in\IR$, then $\Psi_0''=(\Psi'')_-\geq 0$ and $[\Psi+\Psi_0]''=(\Psi'')_+\geq 0$, so both functions are convex.
	Clearly $\Psi_0$ has the same regularity as $\Psi$ and $\Psi'_0(0)=0$, so $\Psi_0$ satisfies the hypotheses.
	It follows that we can apply the previous arguments to $\Psi_0$ and $\Psi+\Psi_0$.
	Observe that the statements of \Cref{lem:chainrule_local_solution} are linear with respect to $\Psi$ to complete the proof.
\end{proof}

\subsection{Interior integral estimates}\label{subsect:Int.int.estimates}
In this subsection we show that local solutions of \eqref{eq:SD-PME} satisfy three interior integral estimates.
Our arguments rely on the chain rule proved in \Cref{lem:chainrule_local_solution}.

Let $\solA$ be a local solution of \eqref{eq:SD-PME}, let $(x_0,t_0)\in \Omega_T$ and perform the translation of the axes mentioned in the third paragraph of \Cref{sect:Geometry.Holder} to set $(x_0,t_0)=0$.
We assume that $\omega>0$ and $R\in(0,1]$ such that \eqref{eq:cond.intrinsic.scaled.cylinder.in.domain} holds.
Further, we use the following notation:
\[
{\bar{t}_0} := -\omega^{m-1}R^2,\quad {\solA_{(l)}} := \max\left\{ \solA , l \right\} \quad \text{and} \quad {\solA^{(l)}}:=\min\left\{ \solA , l \right\},
\]
given $l\in\IR$.
We also write $\solA_{+}=\max\left\{ \solA,0 \right\}$ and $\solA_{-}=(-\solA)_{+}$.
From now on $\zeta$ will always denote a cut-off function, so in particular
\[
0\leq\zeta\leq 1.
\]
Observe that for $l>0$ we have that ${\solA_{(l)}}\in L^2_\loc(0,T;H^1_\loc(\Omega))$.
Indeed, ${\solA_{(l)}}=\beta(\nonlin(\solA_{(l)}))$, where $\beta:=\nonlin^{-1}$, and $\beta$ restricted to $[\nonlin(l),\infty)$ is a continuously differentiable function with a bounded derivative.
By the same argument, $(\solA-l)_+ \in L^2_\loc(0,T;H^1_\loc(\Omega))$.

\begin{proposition}[Interior energy estimate - lower truncation]\label{prop:interior.energy.estimate.lower.trunc}
	Let $k\geq l>0$ and $\zeta\in C^\infty_c({B_R}\times({\bar{t}_0},\infty))$.
	Then we have estimate
	\begin{equation*}
		\begin{aligned}
			&\norm{({\solA_{(l)}}-{k})_{-}\zeta}_{L^\infty({\bar{t}_0},{0};L^2({B_R}))}^2+c_1l^{m-1}\norm{\nabla({\solA_{(l)}}-{k})_{-}\zeta}_{L^2({Q_\omega(R)})}^2\\
			&\quad\leq C\Biggl( (k-l)(k+l)\iint_{{Q_\omega(R)}}\abs{\zeta_t}\chi_{[\solA<k]}+c_2(k-l)^2k^{m-1}\iint_{{Q_\omega(R)}}\abs{\nabla\zeta}^2\chi_{[l<\solA< k]} \\
			&\qquad\qquad\qquad\qquad\qquad + (k-l) \frac{l^m}{m} \iint_{Q_\omega(R)}\abs{\Delta\zeta}\chi_{[\solA<l]}+\LipBoundSource l^{-m_0} (k-l) \iint_{Q_\omega(R)}\chi_{[\solA<k]} \Biggr)
		\end{aligned}
	\end{equation*}
	for some constant $C\geq 0$.
\end{proposition}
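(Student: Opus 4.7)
The plan is to test \eqref{eq:SD-PME.local.solution.including.time-derivative.id} with $\eta = (\solA_{(l)} - k)_- \zeta^2$, which is a valid test function in $H^1_0(B_R)$ at each time slice since $k \geq l > 0$ (so the composition with $\beta=\nonlin^{-1}$ is Lipschitz on the relevant range). Writing $(\solA_{(l)} - k)_- = \psi(\nonlin(\solA))$ for the continuous, piecewise $C^1$ function $\psi$ that equals $k-l$ for $s\leq\nonlin(l)$, equals $k-\beta(s)$ for $\nonlin(l)\leq s\leq\nonlin(k)$, and vanishes for $s\geq\nonlin(k)$, the chain rule \Cref{lem:chainrule_local_solution} applies. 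Choosing the integration constant in the antiderivative so that $\Psi(z) = 0$ for $z \geq k$, one verifies $-\Psi(z) \geq \tfrac12 (z_{(l)} - k)_-^2$ and $|\Psi(z)| \leq \tfrac12(k-l)(k+l)\chi_{[z<k]}$. Integrating from ${\bar{t}_0}$ to $t$ (with the boundary contribution at ${\bar{t}_0}$ vanishing because $\zeta(\cdot,{\bar{t}_0}) = 0$), the term $-\inprod{\Psi(\solA(t))}{\zeta^2(t)}$ controls $\tfrac12\|(\solA_{(l)}-k)_-\zeta(t)\|_{L^2(B_R)}^2$ from below, and taking the supremum over $t$ produces the $L^\infty L^2$ part of the LHS; the term $2\int_{\bar t_0}^t\inprod{\Psi(\solA)}{\zeta\zeta_s}\md s$ is bounded by $(k-l)(k+l)\iint|\zeta_t|\chi_{[\solA<k]}$, which is the first term on the RHS.

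For the diffusion term, expand
\[
\nabla\eta = -\chi_{[l\leq\solA<k]}\nabla\solA\,\zeta^2 + 2(\solA_{(l)}-k)_-\zeta\nabla\zeta
\]
and pair with $\nabla\nonlin(\solA)$. The principal piece $\int\chi_{[l\leq\solA<k]}\nonlin'(\solA)|\nabla\solA|^2\zeta^2$, combined with $\nonlin'(\solA)\geq c_1 l^{m-1}$ on $[l\leq\solA<k]$ (from \eqref{eq:extending.PME-like.degeneracy}), extracts the $c_1 l^{m-1}\|\nabla(\solA_{(l)}-k)_-\zeta\|_{L^2}^2$ part of the LHS. The cross term $2\int(\solA_{(l)}-k)_-\zeta\nabla\nonlin(\solA)\cdot\nabla\zeta$ is then split according to $[l\leq\solA<k]$ and $[\solA<l]$. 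On $[l\leq\solA<k]$, Young's inequality together with the upper bound $\nonlin'(\solA)\leq c_2 k^{m-1}$ absorbs a fraction of the good gradient term and leaves the contribution $c_2(k-l)^2k^{m-1}|\nabla\zeta|^2\chi_{[l<\solA<k]}$. On $[\solA<l]$, where $(\solA_{(l)}-k)_- = k-l$ is constant and $\nabla\solA$ is unavailable due to the degeneracy, rewrite $\nabla\nonlin(\solA)\chi_{[\solA<l]} = \nabla(\nonlin(\solA^{(l)})-\nonlin(l))$ and integrate by parts in $x$ to obtain
\[
2(k-l)\int\zeta\nabla(\nonlin(\solA^{(l)})-\nonlin(l))\cdot\nabla\zeta = -(k-l)\int(\nonlin(\solA^{(l)}) - \nonlin(l))\Delta\zeta^2.
\]
Using $|\nonlin(\solA^{(l)})-\nonlin(l)|\leq \nonlin(l)\leq c_2 l^m/m$ from \eqref{eq:nonlin.PME-like.degeneracy.integrated} and that the integrand is supported in $[\solA<l]$, this yields the $(k-l)l^m/m\,|\Delta\zeta|\chi_{[\solA<l]}$ contribution.

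For the reaction term, \eqref{itm:source.Lipschitz.assumption} gives $|\int\source(\emptyarg,\solA)\eta|\leq \LipBoundSource l^{-m_0}(k-l)\iint\zeta^2\chi_{[\solA<k]}$, matching the final RHS term. Collecting all estimates, taking the supremum over $t\in({\bar{t}_0},0)$ in the boundary contribution, and using $\zeta\leq 1$ to replace $\zeta^2$ by characteristic functions where needed gives the asserted inequality.

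The central obstacle is the cross term on $[\solA<l]$: the porous-medium degeneracy of $\nonlin$ at zero rules out any direct gradient estimate on $\solA$ there, forcing the integration-by-parts maneuver against the auxiliary function $\nonlin(\solA^{(l)})-\nonlin(l)$ (supported in $[\solA<l]$) in order to trade $\nabla\nonlin(\solA)$ for $\Delta\zeta$ at the cost of the small factor $\nonlin(l)\lesssim l^m$. This is the adaptation of \cite{Urb08}, Chapter 6 singled out in the introduction as the essential modification needed to handle \eqref{eq:SD-PME} directly, without passing to its Stefan reformulation.
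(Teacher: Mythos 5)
Your proof is the same proof as the paper's: the same test function $\eta = (\solA_{(l)}-k)_-\zeta^2$, the same invocation of \Cref{lem:chainrule_local_solution}, the same splitting of the diffusion pairing into the intermediate range $[l\leq\solA<k]$ (Young's inequality absorbing half the good gradient term) and the degenerate range $[\solA<l]$ (integration by parts against the cut-off), and the same treatment of the reaction term via \eqref{itm:source.Lipschitz.assumption}. Normalizing the antiderivative $\Psi$ at $k$ rather than at $l$ is a cosmetic shortcut that eliminates the constant term the paper separately identifies and cancels; the resulting bounds on the time term are identical.

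The one substantive deviation is replacing $\nonlin(\solA^{(l)})$ by $\nonlin(\solA^{(l)})-\nonlin(l)$ before integrating by parts. This is a legitimate refinement---it makes the $\chi_{[\solA<l]}$ support genuine, which $\nonlin(\solA^{(l)})$ alone does not---but it sacrifices the sign structure the paper exploits. Expanding $\Delta\zeta^2 = 2\abs{\nabla\zeta}^2 + 2\zeta\Delta\zeta$: in the paper's unsubtracted form $\nonlin(\solA^{(l)})\geq 0$, so the $2\abs{\nabla\zeta}^2$ piece enters the lower bound with a favourable sign and is simply dropped. With your subtraction the factor $\nonlin(\solA^{(l)})-\nonlin(l)\leq 0$, so this piece now enters with the wrong sign and must be estimated rather than discarded; that produces an additional error of size
\[
(k-l)\,\nonlin(l)\,\abs{\nabla\zeta}^2\,\chi_{[\solA<l]} \;\lesssim\; (k-l)\,\frac{l^m}{m}\,\abs{\nabla\zeta}^2\,\chi_{[\solA<l]},
\]
which is not among the four error terms in the stated inequality (the only $\abs{\nabla\zeta}^2$ term present carries $\chi_{[l<\solA<k]}$, a disjoint indicator set). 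In your write-up $\Delta\zeta^2$ is converted directly into $\abs{\Delta\zeta}$, silently skipping this. The extra term is of the same order as the $\abs{\Delta\zeta}$ term once the specific cut-offs of \Cref{prop:DeGiorgi.first.alternative} are substituted, so nothing downstream is damaged, but as a proof of the proposition as stated the step leaves a gap. To repair it, either carry the extra $\abs{\nabla\zeta}^2\chi_{[\solA<l]}$ error term explicitly (and adjust the statement), or integrate by parts without the subtraction so that $(k-l)\inprod{\nonlin(\solA^{(l)})}{2\abs{\nabla\zeta}^2}\geq 0$ can be dropped, accepting that the remaining $\abs{\Delta\zeta}\zeta$ term then no longer comes with a free $\chi_{[\solA<l]}$.
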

\begin{remark}\label{rem:gradient.interchanging.with.cut-off.function}
	The second term in the estimate is written ambiguously in view of the placement of $\zeta$; it could either mean the norm of $(\nabla({\solA_{(l)}}-{k})_{-})\zeta$ or $\nabla(({\solA_{(l)}}-{k})_{-}\zeta)$.
	This is on purpose, since the estimate holds for both interpretations.
	Indeed, the second case yields the extra term $c_1l^{m-1}\norm{({\solA_{(l)}}-k)_-\nabla\zeta}^2_{L^2({Q_\omega(R)})}$, which is estimated by $c_2k^{m-1}(k-l)^2\iint_{{Q_\omega(R)}}\abs{\nabla\zeta}^2\chi_{[\solA<k]}$.
	Therefore it can be absorbed by the second term on the right-hand side.
\end{remark}
\begin{proof}
	We may assume without loss of generality that $l\leq \mu_+$, because otherwise ${(\solA_{(l)}-k)_-}$ vanishes and the estimate certainly holds.
	We may also assume that $k\leq \mu_+$, because the left-hand side of the estimate does not change if $k>\mu_+$ compared to $k=\mu_+$ while the right-hand side increases.
	
	Observe that 
	\[
	\testA=- {(\solA_{(l)}-k)_-} \zeta^2
	\] 
	is an admissible test function in \eqref{eq:SD-PME.local.solution.including.time-derivative.id}.
	Indeed, define $\psi:[0,\infty)\to\IR$ by
	\[
	\psi(\solB)=-\left(\max\left\{ \beta(\solB),l \right\} -k \right)_-
	\]
	and observe that $\beta:=\nonlin^{-1}$ is continuous on $[0,\infty)$ and $C^1$-regular on $(0,\infty)$.
	We conclude that $\psi$ is continuous and piece-wise $C^1$-regular on $[0,\infty)$ and $\psi'$ is bounded, which implies that $-{(\solA_{(l)}-k)_-}=\psi(\nonlin(\solA))$ is in $L^2_\loc(0,T;H^1_\loc(\Omega))$.
	Applying \eqref{eq:SD-PME.local.solution.including.time-derivative.id} to this test function and integrating over $t\in[{\bar{t}_0},\tau]$ for some fixed $\tau\in[{\bar{t}_0},0]$ gives
	\begin{equation}\label{eq:interior.energy.estimate.lower.trunc.test_function_identity}
		\int_{{\bar{t}_0}}^{\tau}\left[-\pinprod{\solA_t}{{(\solA_{(l)}-k)_-}\zeta^2}-\inprod{\nabla\nonlin(\solA)}{\nabla\left({(\solA_{(l)}-k)_-}\zeta^2\right)}\right]=-\int_{{\bar{t}_0}}^{\tau}\inprod{\source(\emptyarg,\solA)}{{(\solA_{(l)}-k)_-}\zeta^2}.
	\end{equation}
	We study each of the three terms in \eqref{eq:interior.energy.estimate.lower.trunc.test_function_identity} separately.
	
	For the first term we apply \Cref{lem:chainrule_local_solution}, where we observe that
	\begin{align*}
		\Psi^\star(z)&=-\int_{l}^{z}(\max\left\{\tilde{z},l\right\}-k)_-\md\tilde{z}
		=\begin{cases}
			(k-l)(l-z)								&\text{if}\ 0\leq z<{l},\\
			\frac{1}{2}(z-k)_-^2-\frac{1}{2}(k-l)^2	&\text{if}\ z\geq l
		\end{cases}\\
		&=(k-l)(z-l)_-+\tfrac{1}{2}(z_{(l)}-k)_-^2-\tfrac{1}{2}(k-l)^2,
	\end{align*}
	and we note that $\zeta({\bar{t}_0})=0$, to obtain
	\begin{align*}
		-\int_{{\bar{t}_0}}^{\tau}\pinprod{\solA_t}{{(\solA_{(l)}-k)_-}\zeta^2}
		&=\inprod{\Psi^\star(\solA({\tau}))}{\zeta^2({\tau})}-2\int_{{\bar{t}_0}}^{{\tau}}\inprod{\Psi^\star(\solA)}{\zeta_t\zeta}\\
		&=\frac{1}{2}\inprod{{(\solA_{(l)}-k)_-}^2(\tau)}{\zeta^2(\tau)}-\int_{{\bar{t}_0}}^{{\tau}}\inprod{{(\solA_{(l)}-k)_-}^2}{\zeta_t\zeta}\\
		&\quad+(k-l)\inprod{(\solA(\tau)-l)_-}{\zeta^2(\tau)}-2(k-l)\int_{{\bar{t}_0}}^{{\tau}}\inprod{(\solA-l)_-}{\zeta_t\zeta}\\
		&\quad-\frac{1}{2}\inprod{(k-l)^2}{\zeta^2(\tau)}+\int_{{\bar{t}_0}}^{{\tau}}\inprod{(k-l)^2}{\zeta\zeta_t}.
	\end{align*}
	The third term is non-negative and the last two terms cancel.
	Furthermore, we estimate the second and fourth term using ${{(\solA_{(l)}-k)_-}^2}{\zeta_t\zeta}\geq -(k-l)^2\zeta\abs{\zeta_t}\chi_{[\solA<k]}$ and $-2(k-l) (\solA-l)_-\zeta\zeta_t\geq -2l(k-l)\zeta\abs{\zeta_t}\chi_{[\solA<k]}$ to obtain the lower bound
	\begin{equation}
		-\int_{{\bar{t}_0}}^{\tau}\pinprod{\solA_t}{{(\solA_{(l)}-k)_-}\zeta^2}\geq \frac{1}{2}\inprod{{(\solA_{(l)}-k)_-}^2(\tau)}{\zeta^2(\tau)}-(k-l)(k+l)\int_{{\bar{t}_0}}^{{\tau}}\inprod{\zeta}{\abs{\zeta_t}\chi_{[\solA<k]}}.
	\end{equation}
	For the second term in \eqref{eq:interior.energy.estimate.lower.trunc.test_function_identity} we compute
	\begin{align*}
		&-\inprod{\nabla\nonlin(\solA)}{\left(\nabla{(\solA_{(l)}-k)_-}\zeta^2\right)\left\{\chi_{[\solA\geq l]}+\chi_{[\solA< l]}\right\}}\\
		&\quad=\inprod{\nonlin'(\solA)|\nabla{(\solA_{(l)}-k)_-}|^2}{\zeta^2}+\inprod{\nonlin'(\solA){(\solA_{(l)}-k)_-}\nabla{(\solA_{(l)}-k)_-}}{\nabla\zeta^2}\\
		&\qquad-(k-l)\inprod{\nabla\nonlin(\solA^{(l)})}{\nabla\zeta^2}\\
		&\quad\geq \frac{1}{2}\inprod{\nonlin'(\solA)|\nabla{(\solA_{(l)}-k)_-}|^2}{\zeta^2}-2\inprod{\nonlin'(\solA){(\solA_{(l)}-k)_-}^2}{\abs{\nabla\zeta}^2\chi_{[l<\solA<k]}}\\
		&\qquad+(k-l)\inprod{\nonlin(\solA^{(l)})}{\Delta\zeta^2}
	\end{align*}
	a.e.\ in $({\bar{t}_0},{\tau})$, where we used Young's inequality to estimate the second term from below and absorbed one of the resulting terms in the first term.
	Further, we applied integration by parts in space / the definition of the weak derivative to the last term.
	Next, we compute $\Delta\zeta^2=2\abs{\nabla\zeta}^2+2\zeta\Delta\zeta$, of which the first term is non-negative, so we find the lower bound
	\begin{align*}
		&-\inprod{\nabla\nonlin(\solA)}{\left(\nabla{(\solA_{(l)}-k)_-}\zeta^2\right)}\\
		&\quad\geq \frac{1}{2}\inprod{\nonlin'(\solA)|\nabla{(\solA_{(l)}-k)_-}|^2}{\zeta^2}-2\inprod{\nonlin'(\solA){(\solA_{(l)}-k)_-}^2}{\abs{\nabla\zeta}^2\chi_{[l<\solA<k]}}\\
		&\qquad\qquad-2(k-l)\inprod{\nonlin(\solA^{(l)})}{\abs{\Delta\zeta}\zeta}	\\
		&\; \stackrel{\eqref{eq:nonlin.PME-like.degeneracy.integrated},\eqref{eq:extending.PME-like.degeneracy}}{\geq}
		\frac{1}{2}c_1l^{m-1}\inprod{|\nabla{(\solA_{(l)}-k)_-}|^2}{\zeta^2}-2c_2k^{m-1}(k-l)^2\inprod{\abs{\nabla\zeta}^2}{\chi_{[l<\solA<k]}}\\
		&\qquad\qquad-2\frac{c_2}{m}l^{m}(k-l)\inprod{\abs{\Delta\zeta}}{\zeta\chi_{[\solA< l]}}
	\end{align*}
	a.e.\ in $({\bar{t}_0},{\tau})$.
	
	For the last term we simply recall that $\source$ satisfies \eqref{itm:source.Lipschitz.assumption}, so we obtain
	\[
	-\int_{{\bar{t}_0}}^{\tau}\inprod{\source(\emptyarg,\solA)}{{(\solA_{(l)}-k)_-}\zeta^2}\leq \LipBoundSource l^{-m_0}\int_{{\bar{t}_0}}^{\tau}\inprod{({\solA_{(l)}}-k)_{-}}{\zeta^2}\leq \LipBoundSource l^{-m_0} (k-l)\iint_{Q_\tau}\chi_{[\solA<k]},
	\]
	where ${Q_\tau}:={B_R}\times({\bar{t}_0},{\tau})$.
	Combining the three estimates and taking the support of $\zeta$ into account yields
	\begin{equation*}
		\begin{aligned}
			&\int_{B_R\times\{{\tau}\}}({\solA_{(l)}}-{k})_{-}^2\zeta^2+c_1 l^{m-1}\norm{\zeta\nabla({\solA_{(l)}}-{k})_{-}}_{L^2({Q_\tau})}^2\\
			&\quad\leq C(k-l)(k+l)\iint_{{Q_\tau}}\abs{\zeta_t}\chi_{[\solA<k]}+Cc_2(k-l)^2k^{m-1}\iint_{{Q_\tau}}\abs{\nabla\zeta}^2\chi_{[l<\solA< k]} \\
			&\qquad+ C\frac{c_2}{m}(k-l)l^{m}\iint_{Q_\tau}\abs{\Delta\zeta}\chi_{[\solA<l]}+C\LipBoundSource l^{-m_0} (k-l)\iint_{Q_\tau}\chi_{[\solA<k]}.
		\end{aligned}
	\end{equation*}
	Taking the supremum over $\tau\in[-{\bar{t}_0},0]$ yields an estimate for the first term on the left-hand side, where we estimate the right-hand side uniformly with respect to $\tau$ by taking integrals over the larger domain $Q_\omega(R)$ instead, i.e.\ we put $\tau=0$.
	Moreover, putting $\tau=0$ we obtain the same bound for the second term on the left-hand side.
	Adding the two inequalities and absorbing a factor $2$ into $C$ yields the desired estimate.
\end{proof}

Recall that $\mu_+:=\esssup_{{Q_\omega (R)}}\solA$.
\begin{proposition}[Interior energy estimate - upper truncation]\label{prop:interior.energy.estimate.upper.trunc}
	Suppose $k>0$ and $\zeta\in C^\infty_c({B_R}\times({\bar{t}_0},\infty))$.
	Then we have estimate
	\begin{equation}\label{eq:interior.energy.estimate.upper.trunc}
		\begin{aligned}
			&\norm{(\solA-{k})_{+}\zeta}_{L^\infty({\bar{t}_0},{\tau};L^2({B_R}))}^2+c_1k^{m-1}\norm{\nabla(\solA-{k})_{+}\zeta}_{L^2({Q_\omega (R)})}^2\\
			&\quad\leq C\Bigl( (\mu_+
			-k)^2\iint_{{Q_\omega (R)}}\abs{\zeta_t}\chi_{[\solA>k]}+c_2(\mu_+-k)^2\mu_+^{m-1}\iint_{{Q_\omega (R)}}\abs{\nabla\zeta}^2\chi_{[\solA>k]} \\
			&\qquad\qquad+\LipBoundSource k^{-m_0} (\mu_+-k)\iint_{Q_\omega (R)}\chi_{[\solA>k]}\Bigr)
		\end{aligned}
	\end{equation}
	for some constant $C\geq 0$.
\end{proposition}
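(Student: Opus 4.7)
The plan is to mirror the argument for \Cref{prop:interior.energy.estimate.lower.trunc}, but now testing against $\testA = (\solA-k)_+\zeta^2$. First, I would verify admissibility: writing $(\solA-k)_+ = \psi(\nonlin(\solA))$ with $\psi(s)=(\beta(s)-k)_+$ (where $\beta=\nonlin^{-1}$), the function $\psi$ is continuous, piecewise $C^1$ on $[0,\infty)$, and has bounded derivative, so $(\solA-k)_+\in L^2_\loc(0,T;H^1_\loc(\Omega))$ and the test function is admissible in \eqref{eq:SD-PME.local.solution.including.time-derivative.id}. Plugging it in and integrating over $t\in[\bar{t}_0,\tau]$ splits the estimate into the familiar three pieces: the time-derivative pairing, the diffusion term and the source term.

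For the time derivative I would invoke \Cref{lem:chainrule_local_solution} with $\psi$ as above and $l=0$, so that $\Psi^\star(z)=\tfrac{1}{2}(z-k)_+^2$. Using $\zeta(\bar{t}_0)=0$ this yields
\[
\int_{\bar{t}_0}^{\tau}\pinprod{\solA_t}{(\solA-k)_+\zeta^2}
=\tfrac{1}{2}\inprod{(\solA(\tau)-k)_+^2}{\zeta^2(\tau)}
-\int_{\bar{t}_0}^{\tau}\inprod{(\solA-k)_+^2}{\zeta\zeta_t},
\]
and the second term is bounded below by $-(\mu_+-k)^2\iint|\zeta_t|\chi_{[\solA>k]}$, which matches the first right-hand term of \eqref{eq:interior.energy.estimate.upper.trunc}.

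For the diffusion I would expand $\nabla((\solA-k)_+\zeta^2) = \zeta^2\nabla(\solA-k)_+ + 2(\solA-k)_+\zeta\nabla\zeta$, so that
\[
\inprod{\nabla\nonlin(\solA)}{\nabla((\solA-k)_+\zeta^2)}
=\inprod{\nonlin'(\solA)|\nabla(\solA-k)_+|^2}{\zeta^2}
+2\inprod{\nonlin'(\solA)(\solA-k)_+\nabla(\solA-k)_+}{\zeta\nabla\zeta}.
\]
Young's inequality absorbs half of the first term on the right-hand side into the left-hand side at the cost of a term controlled by $\nonlin'(\solA)(\solA-k)_+^2|\nabla\zeta|^2$. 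On the set $[\solA>k]$ we have $k\leq\solA\leq\mu_+$, so \eqref{eq:extending.PME-like.degeneracy} gives $\nonlin'(\solA)\geq c_1 k^{m-1}$ for the dominant term and $\nonlin'(\solA)\leq c_2\mu_+^{m-1}$ together with $(\solA-k)_+\leq\mu_+-k$ for the error term, yielding the second right-hand term of \eqref{eq:interior.energy.estimate.upper.trunc}. Note the key simplification compared to \Cref{prop:interior.energy.estimate.lower.trunc}: there is no analogue of the $[\solA<l]$ set, so no $\Delta\zeta$ term appears.

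For the source term, $|\source(\emptyarg,\solA)|\leq \LipBoundSource \solA^{-m_0}\leq \LipBoundSource k^{-m_0}$ on $[\solA>k]$ combined with $(\solA-k)_+\leq\mu_+-k$ directly delivers the last term. Finally, taking the supremum over $\tau\in[\bar{t}_0,0]$ for the pointwise-in-time term and setting $\tau=0$ for the gradient term — estimating the right-hand sides uniformly by integrating over the full cylinder $Q_\omega(R)$ — completes the estimate. The main point requiring attention is keeping track of which factor ($k^{m-1}$ on the left versus $\mu_+^{m-1}$ on the right) arises in each integration-by-parts term, but there is no conceptual obstacle beyond the bookkeeping already handled in the lower-truncation case.
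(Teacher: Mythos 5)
Your proposal matches the paper's proof essentially line for line: the same test function $(\solA-k)_+\zeta^2$, the same admissibility argument via $\psi(\solB)=(\beta(\solB)-k)_+$, the same application of \Cref{lem:chainrule_local_solution} to get $\Psi^\star(z)=\tfrac{1}{2}(z-k)_+^2$, the same use of Young's inequality with $\nonlin'(k)\geq c_1k^{m-1}$ and $\nonlin'(\mu_+)\leq c_2\mu_+^{m-1}$, and the same treatment of the source term. Your observation that no $\Delta\zeta$ term appears (in contrast to the lower-truncation case) is also the correct reason why this estimate is simpler.
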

\begin{remark}
	An analogous statement as in \Cref{rem:gradient.interchanging.with.cut-off.function} holds, that is, the estimate holds for both interpretations of $\nabla(\solA-k)_+\zeta$.
\end{remark}
\begin{proof}
	Without loss of generality we may assume that $k<\mu_+$, because $(\solA-k)_+$ vanishes otherwise and the estimate then certainly holds.
	
	Observe that
	\[
	\eta={(\solA-k)_{+}}\zeta^2
	\]
	is an admissible test function in \eqref{eq:SD-PME.local.solution.including.time-derivative.id} by similar arguments as for the test function in the proof of \Cref{prop:interior.energy.estimate.lower.trunc}, where we now put
	\[
	\psi(\solB)=(\beta(\solB)-k)_+,\quad\text{for}\ \solB\geq 0.
	\]
	Again, $\psi$ is continuous and piece-wise $C^1$-regular on $[0,\infty)$ and $\psi'$ is bounded. 
	We apply \eqref{eq:SD-PME.local.solution.including.time-derivative.id} to this test function, which we integrate over $t\in[{\bar{t}_0},{\tau}]$ for some fixed ${\tau}\in[{\bar{t}_0},0]$.
	We study each of the three terms in the resulting identity separately.
	
	For the first term we apply \Cref{lem:chainrule_local_solution}, where we first observe that
	\[
	\Psi^\star(z)=\int_k^z(\tilde{z}-k)_+\md\tilde{z}=
	\frac{1}{2}(z-k)_+^2
	\]
	and recall that $\zeta({\bar{t}_0})=0$, to obtain
	\[
	\begin{aligned}
		\int_{{\bar{t}_0}}^{{\tau}}\pinprod{\solA_{t}}{{(\solA-k)_{+}}\zeta^2}&=\frac{1}{2}\inprod{(\solA({\tau})-k)_+^2}{\zeta^2}-\int_{\bar{t}_0}^{\tau}\inprod{{(\solA-k)_{+}}^2}{\zeta\zeta_t}.
	\end{aligned}
	\]
	
	For the second term we use Young's inequality to obtain
	\begin{align*}
		&\int_{\bar{t}_0}^{\tau}\inprod{\nabla\nonlin(\solA)}{\nabla\left({(\solA-k)_{+}}\zeta^2\right)}=\int_{\bar{t}_0}^{\tau}\inprod{\nabla\nonlin(\solA)}{\left(\nabla{(\solA-k)_{+}}\right)\zeta^2+{(\solA-k)_{+}}\nabla\zeta^2}\\
		&\geq \frac{1}{2}\nonlin'(k)\iint_{Q_\tau}|\nabla{(\solA-k)_{+}}|^2\zeta^2-2\nonlin'(\mu_+)\iint_{Q_\tau}{(\solA-k)_{+}}^2\abs{\nabla\zeta}^2,
	\end{align*}
	where ${Q_\tau}:={B_R}\times({\bar{t}_0},{\tau})$.
	We use \eqref{eq:extending.PME-like.degeneracy} to estimate $\nonlin'(k)\geq c_1 k^{m-1}$ and $\nonlin'(\mu_+)\leq c_2 \mu_+^{m-1}$.
	
	For the last term we note that $\source$ satisfies \eqref{itm:source.Lipschitz.assumption}, hence as in the proof of \Cref{prop:interior.energy.estimate.lower.trunc} we have the upper bound
	\[
	\int_{{\bar{t}_0}}^{{\tau}}\inprod{\source(\emptyarg,\solA)}{{(\solA-k)_{+}}\zeta^2} \leq \LipBoundSource k^{-m_0} \iint_{Q_\tau}{(\solA-k)_{+}}\leq \LipBoundSource k^{-m_0} (\mu_+-k)\iint_{Q_\tau}\chi_{[\solA>k]}.
	\]
	Combining the three estimates and taking the supremum over ${\tau}\in[{\bar{t}_0},0]$ as in the final step in the proof of \Cref{prop:interior.energy.estimate.lower.trunc} shows that the desired estimate holds.
\end{proof}

Recall that $\mu_+:=\esssup_{{Q_\omega(R)}}\solA$, $\mu_-:=\essinf_{{Q_\omega(R)}}\solA$ and assume $\omega\geq \mu_+-\mu_+$.
\begin{proposition}[Interior logarithmic estimate]\label{prop:int.log.estimate}
	Let $k,l\in\IN$, $l>k$, let $\tau,t\in [{\bar{t}_0},{\tau}]$, $\tau\leq t$ and let $\zeta\in C^\infty_c({B_R})$.
	Then
	\begin{equation*}
		\begin{aligned}
			&(l-k-1)^2\int_{{B_R}\times\{\tau\}}\zeta^2\chi_{[\solA>\mu_-+\omega-\frac{\omega}{2^{l}}]}\\
			&\leq (l-k)^2\int_{{B_R}\times\{t\}}\zeta^2\chi_{[\solA>\mu_-+\omega-\frac{\omega}{2^k}]} + C\, c_2 (l-k)\mu_+^{m-1}\frac{R^2}{\omega^{m-1}}\int_{B_R}\abs{\nabla\zeta}^2 \\
			&\quad +C\, \LipBoundSource \left( \frac{\omega}{2} \right)^{-m_0} 2^{l}\frac{R^2}{\omega^{m}}\abs{{B_R}}
		\end{aligned}
	\end{equation*}
	for some constant $C\geq 0$.
\end{proposition}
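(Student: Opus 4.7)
\emph{Plan.} I would prove this via a logarithmic test function, following the classical DiBenedetto strategy. Set $c_k := \mu_- + \omega - \omega/2^k$, $H_k := \omega/2^k$ (an upper bound for $\mu_+ - c_k$), $d := \omega/2^l$, and introduce
\[
\testB_k(\solA) := \left[\ln\frac{H_k}{H_k - (\solA - c_k)_+ + d}\right]_+.
\]
By direct inspection $\testB_k$ is continuous, piecewise $C^1$ with bounded derivative $\testB_k'(\solA) = \chi_{[\solA > c_k + d]}/(H_k - (\solA - c_k)_+ + d)$, so $2\testB_k\testB_k'$ qualifies as an admissible multiplier for \Cref{lem:chainrule_local_solution}. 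One verifies the key pointwise bounds $0 \leq \testB_k \leq \ln(H_k/d) = (l-k)\ln 2$ globally, $\testB_k \geq (l-k-1)\ln 2$ on $[\solA > c_l]$ (since there $H_k - (\solA - c_k)_+ + d \leq 2d$), and the convexity identity $(\testB_k^2)''(\solA) = 2(1 + \testB_k)(\testB_k')^2 \geq 0$.

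The idea is then to test \eqref{eq:SD-PME.local.solution.including.time-derivative.id} against $\testA = 2\testB_k(\solA)\testB_k'(\solA)\zeta^2$ with $\zeta = \zeta(x)$ time-independent, invoking \Cref{lem:chainrule_local_solution} with $\Psi^\star = \testB_k^2$. This produces the differential identity
\[
\frac{\md}{\md t}\inprod{\testB_k^2(\solA)}{\zeta^2} + \inprod{\nabla\nonlin(\solA)}{\nabla(2\testB_k\testB_k'\zeta^2)} = \inprod{\source(\emptyarg,\solA)}{2\testB_k\testB_k'\zeta^2}.
\]
Expanding the spatial gradient produces the non-negative contribution $2\iint \nonlin'(\solA)(1 + \testB_k)(\testB_k')^2|\nabla \solA|^2 \zeta^2$ plus the mixed term $4\iint \nonlin'(\solA)\testB_k\testB_k'\zeta\nabla\solA \cdot \nabla\zeta$; Young's inequality $|2XY| \leq X^2 + Y^2$ applied with $X = \sqrt{2\nonlin'(\solA)(1+\testB_k)}\testB_k'|\nabla\solA|\zeta$ and $Y = \sqrt{2\nonlin'(\solA)}\testB_k|\nabla\zeta|/\sqrt{1+\testB_k}$ absorbs the mixed term exactly into the non-negative one, leaving the clean lower bound $\inprod{\nabla\nonlin(\solA)}{\nabla(2\testB_k\testB_k'\zeta^2)} \geq -2\iint \nonlin'(\solA)\testB_k|\nabla\zeta|^2$.

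Integrating in time in the direction in which the gradient term enters with a favourable sign then yields
\[
\inprod{\testB_k^2(\solA(\tau))}{\zeta^2} \leq \inprod{\testB_k^2(\solA(t))}{\zeta^2} + 2\iint \nonlin'(\solA)\testB_k|\nabla\zeta|^2 + 2\iint|\source(\emptyarg,\solA)|\,\testB_k\testB_k'\zeta^2.
\]
The correction terms are estimated pointwise by $\nonlin'(\solA) \leq c_2 \mu_+^{m-1}$ and $\testB_k \leq (l-k)\ln 2$ (gradient correction), while on the support of $\testB_k'$ one has $\solA > c_k \geq \omega/2$ (for $k\geq 1$) so $|\source| \leq \LipBoundSource(\omega/2)^{-m_0}$, combined with $\testB_k\testB_k' \leq (l-k)(\ln 2)\cdot 2^l/\omega$; multiplying by the time length $\omega^{1-m}R^2$ reproduces the factors $R^2/\omega^{m-1}$ and $R^2/\omega^m$. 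Combining the pointwise bounds $\testB_k^2 \geq ((l-k-1)\ln 2)^2\chi_{[\solA > c_l]}$ on the left with $\testB_k^2 \leq ((l-k)\ln 2)^2\chi_{[\solA > c_k]}$ on the right, and absorbing $(\ln 2)^{-2}$ into $C$, yields the claimed inequality.

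The main technical obstacle is the precise calibration of Young's inequality in the gradient term: the convexity identity $(\testB_k^2)'' = 2(1+\testB_k)(\testB_k')^2$ is exactly what allows the weights $X, Y$ above to cancel the $|\nabla\solA|^2$ contribution and leave only a $|\nabla\zeta|^2$ integrand multiplied by the bounded quantity $\nonlin'(\solA)\testB_k$. A secondary bookkeeping point is that the naive estimate $\testB_k\testB_k' \leq (l-k)(\ln 2)\cdot 2^l/\omega$ carries an extra factor $(l-k)$ relative to the stated source correction; this has to be absorbed into $C$ for the range of indices $l$ used in the subsequent iteration.
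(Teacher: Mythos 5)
Your proposal is essentially the paper's proof: the same logarithmic weight $\varphi$ (your $\testB_k$), the same appeal to \Cref{lem:chainrule_local_solution} with $\Psi^\star=\varphi^2$, the same Young's-inequality absorption of the mixed gradient term made possible by the identity $(\varphi^2)''=2(1+\varphi)(\varphi')^2$, and the same pointwise calibration of $\varphi$, $\varphi'$ at $\mu_+$ and at $\mu_-+\omega-\omega/2^{l}$. The extra factor $(l-k)$ you flag in the source correction does in fact appear in the paper's own intermediate display (just before substituting $|Q^\tau|\leq\omega^{1-m}R^2|B_R|$), so the stated inequality is marginally weaker than what the argument actually yields; you are right that this is harmless, since in the only downstream application (\Cref{cor:Alternative.condition.time-interval}) one takes $k=1$ and a fixed $l=n_*$ depending only on the data, so $(l-k)$ may simply be absorbed into $C$.
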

\begin{proof}
	Consider the function $\varphi:[0,\mu_+]\to[0,\infty)$ given by
	\[
	\begin{aligned}
		\varphi(z)&=\log^+\left(\frac{\frac{\omega}{2^k}}{\frac{\omega}{2^k}-(z-(\mu_-+\omega-\frac{\omega}{2^k}))_+ +\frac{\omega}{2^l}}\right)\\
		&=\left\{\begin{array}{cl}
			\log\left(\frac{\frac{\omega}{2^k}}{\mu_-+\omega-z+\frac{\omega}{2^l}}\right)&\text{if}\ z\geq\mu_-+\omega-\frac{\omega}{2^k}+\frac{\omega}{2^l},\\
			0&\text{if}\ z<\mu_-+\omega-\frac{\omega}{2^k}+\frac{\omega}{2^l}.
		\end{array}\right.
	\end{aligned}
	\]
	Note that $\varphi$ is a continuous, piece-wise smooth function with bounded derivative.
	We compute
	\begin{align*}
		\varphi'(z)&=\frac{1}{\mu_-+\omega-z+\frac{\omega}{2^l}},\quad\varphi''(z)=\frac{1}{\left(\mu_-+\omega-z+\frac{\omega}{2^l}\right)^2}=(\varphi'(z))^2
	\end{align*}
	for all $z\geq \mu_-+\omega-\frac{\omega}{2^k}+\frac{\omega}{2^l}$, so 
	\[
	(\varphi^2)''=2((\varphi)'^2+\varphi\varphi'')=2(1+\varphi)(\varphi')^2.
	\]
	
	We use the function $\varphi$ to define the test function 
	\[
	\eta=(\varphi^2)'(\solA)\zeta^2,
	\]
	which is admissible in \eqref{eq:SD-PME.local.solution.including.time-derivative.id}, because $\psi:[0,\infty)\to[0,\infty)$,
	\[
	\psi(\solB):=(\varphi^2)'\left(\min\left\{ \beta(\solB) , \mu_+ \right\}\right),
	\]
	is a continuous, piece-wise $C^1$-regular function with bounded derivative.
	In particular,
	\begin{equation}
		\nabla \psi(\nonlin(\solA)) = (\varphi^2)''(\solA)\nabla(\solA-\tfrac{\omega}{2})_+
		=2(1+\varphi(\solA))\varphi'(\solA)^2\nabla(\solA-\tfrac{\omega}{2})_+.\label{eq:int.log.estimate.gradient.of.test_function}
	\end{equation}
	We apply \eqref{eq:SD-PME.local.solution.including.time-derivative.id} to this test function and integrate over $[\tau,t]$ to obtain
	\begin{equation*}
		\int_{{t}}^{\tau}\left[\pinprod{\solA_t}{(\varphi^2)'(\solA)\zeta^2}+\inprod{\nabla\nonlin(\solA)}{\nabla\left((\varphi^2)'(\solA)\zeta^2\right)}\right]=\int_{{t}}^{\tau}\inprod{\source(\emptyarg,\solA)}{(\varphi^2)'(\solA)\zeta^2}.
	\end{equation*}
	We study each of the three terms separately.
	To simplify the notation, we write
	\[
	\varphi(x,t)=\varphi(\solA(x,t)),\quad\varphi'(x,t)=\varphi'(\solA(x,t))\quad\text{and}\quad \varphi''(x,t)=\varphi''(\solA(x,t)).
	\]
	
	For the first term we use \Cref{lem:chainrule_local_solution}.
	First we note that
	\[
	\Psi^\star(\solA)=\int_0^\solA (\varphi^2)'(z)\md z=\varphi^2(\solA)
	\]
	and that $\zeta_t=0$, since $\zeta$ does not depend on $t$.
	We conclude that
	\begin{align*}
		\int_{{t}}^{\tau}\pinprod{\solA_t}{(\varphi^2)'(\solA)\zeta^2}=\inprod{\varphi^2(\solA({\tau}))}{\zeta^2}-\inprod{\varphi^2(\solA({t}))}{\zeta^2}.
	\end{align*}
	
	For the second term we compute
	\begin{align*}
		&\int_{{t}}^{{\tau}}\inprod{\nabla\nonlin(\solA)}{\nabla\left((\varphi^2)'\zeta^2\right)}
		\stackrel{\eqref{eq:int.log.estimate.gradient.of.test_function}}{=}\int_{{t}}^{{\tau}}\inprod{\nabla\nonlin(\solA)}{2(1+\varphi)\varphi'^2\nabla(\solA-\tfrac{\omega}{2})_+\zeta^2+(\varphi^2)'\nabla\zeta^2}\\
		&\quad=2\int_{{t}}^{{\tau}}\inprod{\nonlin'(\solA)(1+\varphi)\varphi'^2\abs{\nabla(\solA-\tfrac{\omega}{2})_+}^2}{\zeta^2}+\int_{{t}}^{{\tau}}\inprod{\nonlin'(\solA)(\varphi^2)'\nabla(\solA-\tfrac{\omega}{2})_+}{\nabla\zeta^2}\\
		&\quad\geq 2\int_{{t}}^{{\tau}}\inprod{\nonlin'(\solA)(1+\varphi)\varphi'^2\abs{\nabla(\solA-\tfrac{\omega}{2})_+}^2}{\zeta^2}-4\int_{{t}}^{{\tau}}\inprod{\nonlin'(\solA)\varphi\varphi'\abs{\nabla(\solA-\tfrac{\omega}{2})_+}}{\zeta\abs{\nabla\zeta}}\\
		&\quad \geq 2\int_{{t}}^{{\tau}}\inprod{\nonlin'(\solA)(1+\varphi)\varphi'^2\abs{\nabla(\solA-\tfrac{\omega}{2})_+}^2}{\zeta^2}-2\int_{{t}}^{{\tau}}\inprod{\nonlin'(\solA)\varphi\varphi'^2\abs{\nabla(\solA-\tfrac{\omega}{2})_+}^2}{\zeta^2}\\
		&\qquad -2\int_{{t}}^{{\tau}}\inprod{\nonlin'(\solA)\varphi}{\abs{\nabla\zeta}^2}\\
		&\quad \geq -2\int_{{t}}^{{\tau}}\inprod{\nonlin'(\solA)\varphi}{\abs{\nabla\zeta}^2} \stackrel{\eqref{eq:extending.PME-like.degeneracy}}{\geq} -2c_2 \mu_+^{m-1}\int_{{t}}^{{\tau}}\inprod{\varphi}{\abs{\nabla\zeta}^2},
	\end{align*}
	where we used Young's inequality to obtain the second estimate.
	
	For the last term we simply recall \eqref{itm:source.Lipschitz.assumption} to obtain estimate
	\begin{equation*}
		\int_{{t}}^{{\tau}}\inprod{\source(\emptyarg,\solA)}{(\varphi^2)'\zeta^2}\leq 2 \LipBoundSource \left(\frac{\omega}{2}\right)^{-m_0}\int_{{t}}^{{\tau}}\inprod{\varphi \varphi'}{\zeta^{2}},
	\end{equation*}
	where we used that $\solA\geq \frac{\omega}{2}$ in the support of $\varphi$.
	
	Combining the three estimates, we have that
	\begin{equation*}
		\begin{aligned}
			\int_{{B_R}}\varphi^2(\solA({\tau}))\zeta^2&\leq\int_{{B_R}}\varphi^2(\solA({t}))\zeta^2+2c_2\mu_+^{m-1}\iint_{Q^\tau}\varphi(\solA)\abs{\nabla\zeta}^2+2\LipBoundSource \mu_-^{-m_0} \int_{Q^\tau}{\varphi(\solA)\varphi'(\solA)\zeta^2},
		\end{aligned}
	\end{equation*}
	where ${Q^\tau}:={B_R}\times({t},{\tau})$.
	Note that $\varphi$ and $\varphi'$ are a non-decreasing functions, so evaluating at $\mu_+$ we find the upper bounds
	\begin{gather*}
		\varphi(\solA)\leq\log\left(\frac{\frac{\omega}{2^k}}{\omega-(\mu_+-\mu_-)+\frac{\omega}{2^l}}\right)\leq\log\left(\frac{\frac{\omega}{2^k}}{\frac{\omega}{2^l}}\right)=(l-k)\log2,\\
		\varphi'(\solA)\leq\frac{1}{\omega-(\mu_+-\mu_-)+\frac{\omega}{2^l}}\leq\frac{2^l}{\omega}.
	\end{gather*}
	Similarly, on the set ${B_R}\cap\left[\solA>\mu_-+\omega-\frac{\omega}{2^{l}}\right]$ the function $\varphi(\solA)$ has the lower bound
	\[
	\varphi(\mu_-+\omega-\tfrac{\omega}{2^{l}})=\log\left(\frac{\frac{\omega}{2^k}}{\frac{\omega}{2^{l}}+\frac{\omega}{2^l}}\right)=\log\left(\frac{2^{l-1}}{2^k}\right)=(l-k-1)\log 2.
	\]
	Substituting these estimates into the inequality and dividing both sides by $(\log(2))^2$, we obtain
	\begin{equation*}
		\begin{aligned}
			&(l-k-1)^2\int_{{B_R}\times\{{\tau}\}}\zeta^2\chi_{[\solA>\mu_-+\omega-\frac{\omega}{2^l}]}\\
			&\leq  (l-k)^2\int_{{B_R}\times\{{t}\}}\zeta^2\chi_{[\solA>\mu_-+\omega-\frac{\omega}{2^k}]}+C\, c_2(l-k)\mu_+^{m-1}\iint_{Q^\tau}\abs{\nabla\zeta}^2 \\
			&\quad + C\, \LipBoundSource \left(\frac{\omega}{2}\right)^{-m_0} (l-k)\frac{2^l}{\omega}\abs{{Q^\tau}}.
		\end{aligned}
	\end{equation*}
	Finally, note that $\abs{{Q^\tau}}\leq\frac{R^2}{\omega^{m-1}}\abs{{B_R}}$ and substitute this in the last two terms.
\end{proof}

\subsection{Embeddings of parabolic spaces and technical lemma's}
We recall the definition of the functional spaces that we will use in the sequel.
\begin{definition}[Parabolic spaces]\label{def:parab.space}
	The Banach spaces $V^2(\Omega_T)$ and $V^2_0(\Omega_T)$ are given by
	\begin{align*}
		V^2(\Omega_T)&=L^\infty(0,T;L^2(\Omega))\cap L^2(0,T;H^1(\Omega)),\\
		V^2_0(\Omega_T)&=L^\infty(0,T;L^2(\Omega))\cap L^2(0,T;H^1_0(\Omega)).
	\end{align*}
	Both spaces are equipped with the norm
	\[
	\norm{\solC}_{V^2(\Omega_T)}=\esssup_{0\leq t\leq T}\norm{\solC(t)}_{L^2(\Omega)}+\norm{\nabla\solC}_{L^2(\Omega_T)}
	\]
	where $\solC\in V^2(\Omega_T)$.
\end{definition}
The following result shows that the space $V^2_0(\Omega_T)$ is embedded into $L^2(\Omega_T)$.
\begin{lemma}\label{lem:parab.space.embedding}
	Let $\Omega\subset \IR^N$ be any bounded domain and $0<T<\infty$.
	Then there exists a constant $C\geq 0$ depending only on $N$ such that
	\begin{equation}\label{eq:parab.space.embedding}
		\norm{\solC}_{L^2(\Omega_T)}\leq C\abs{\Omega_T\cap\left[\solC\neq 0\right]}^{\frac{1}{N+2}}\norm{\solC}_{V^2(\Omega_T)}
	\end{equation}
	for all $\solC\in V^2_0(\Omega_T)$.
\end{lemma}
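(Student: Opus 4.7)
The plan is to reduce the statement to the parabolic Sobolev embedding
\[
\norm{\solC}_{L^q(\Omega_T)} \leq C \norm{\solC}_{V^2(\Omega_T)}, \qquad q = \frac{2(N+2)}{N},
\]
for $\solC \in V^2_0(\Omega_T)$, and then to extract the factor $\abs{\Omega_T \cap [\solC \neq 0]}^{1/(N+2)}$ by a single application of H\"older's inequality. Indeed, writing $E := \Omega_T \cap [\solC \neq 0]$ and noting that $\solC = \solC \chi_E$ a.e., H\"older with exponents $q/2$ and $(q/2)'$ yields
\[
\norm{\solC}_{L^2(\Omega_T)}^2 = \int_E \abs{\solC}^2 \leq \abs{E}^{1-2/q} \norm{\solC}_{L^q(\Omega_T)}^2,
\]
and the exponent $1 - 2/q = 2/(N+2)$ gives exactly $\abs{E}^{1/(N+2)}$ after taking the square root. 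So everything comes down to the parabolic Sobolev embedding.

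For that embedding, I would proceed slice-wise in time, combining the Sobolev inequality with an $L^2$-interpolation. For $N \geq 3$, I would first use the Sobolev embedding $H^1_0(\Omega) \hookrightarrow L^{2N/(N-2)}(\Omega)$ and then interpolate via H\"older: for a.e.\ $t \in (0,T)$,
\[
\norm{\solC(t)}_{L^q(\Omega)} \leq \norm{\solC(t)}_{L^2(\Omega)}^{1-\theta} \norm{\solC(t)}_{L^{2N/(N-2)}(\Omega)}^{\theta} \leq C \norm{\solC(t)}_{L^2(\Omega)}^{1-\theta} \norm{\nabla \solC(t)}_{L^2(\Omega)}^{\theta},
\]
where $\theta \in (0,1)$ is determined by $\frac{1}{q} = \frac{1-\theta}{2} + \frac{\theta(N-2)}{2N}$. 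The choice $q = 2(N+2)/N$ corresponds precisely to $q\theta = 2$, so raising to the $q$-th power and integrating in time gives
\[
\int_0^T \norm{\solC(t)}_{L^q(\Omega)}^q \,\md t \leq C \norm{\solC}_{L^\infty(0,T;L^2(\Omega))}^{q-2} \int_0^T \norm{\nabla \solC(t)}_{L^2(\Omega)}^2 \,\md t \leq C \norm{\solC}_{V^2(\Omega_T)}^q,
\]
which is the desired embedding. The cases $N=2$ and $N=1$ are handled by replacing the Sobolev embedding with the Gagliardo--Nirenberg inequality (using e.g.\ any $L^r$ with $r < \infty$ in place of $L^{2N/(N-2)}$, or the embedding $H^1 \hookrightarrow L^\infty$ in dimension one), and tuning the exponent so that $q\theta = 2$; the same exponent $q = 2(N+2)/N$ comes out.

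The only step that requires some care is the exponent bookkeeping in the Gagliardo--Nirenberg interpolation, and in particular verifying that $q\theta = 2$ is achievable while producing exactly the parabolic exponent $q = 2(N+2)/N$. The result itself is classical and can be quoted from \cite{DiBenedetto1993} or \cite{lady1968}, so in the actual write-up I would either cite one of these references directly or present a brief self-contained derivation along the lines above, depending on how self-contained the rest of the paper is.
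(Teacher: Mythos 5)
The paper's proof of this lemma is a one-line citation to Eq.\ (3.7) on p.\ 76 of \cite{lady1968}, whereas you derive the inequality from scratch. Your derivation is correct: the H\"older step extracts exactly $\abs{E}^{1-2/q}=\abs{E}^{2/(N+2)}$ on the squared norm, and the slice-wise Gagliardo--Nirenberg interpolation with $q=2(N+2)/N$ gives $\theta=N/(N+2)$, hence $q\theta=2$, so raising to the $q$-th power and integrating in time produces precisely
\begin{equation*}
\int_0^T\norm{\solC(t)}_{L^q(\Omega)}^q\,\md t\leq C\,\norm{\solC}^{q-2}_{L^\infty(0,T;L^2(\Omega))}\norm{\nabla\solC}^2_{L^2(\Omega_T)}\leq C\norm{\solC}^q_{V^2(\Omega_T)}.
\end{equation*}
The constants in the Sobolev and Gagliardo--Nirenberg inequalities for $H^1_0$ functions (extended by zero to $\IR^N$) depend only on $N$, so the claimed dependence of $C$ is preserved; the low-dimensional cases $N=1,2$ work out with the same $q$, as you note. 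What your version buys is self-containedness; what the paper buys is brevity by delegating to a standard reference, which is the usual choice here since the embedding is classical. You in fact anticipated both options in your closing sentence.
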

\begin{proof}
	This inequality is given in \cite{lady1968}, Equation (3.7) on page 76.
\end{proof}

Let us mention the following Poincar\'e type inequality that is concerned with truncated functions. Actually, this is a lemma due to De Giorgi, see Lemma 2.3 in \cite{DiBenedetto1982}.
\begin{lemma}\label{lem:Poincare-type.ineq.tracking.levelsets}
	Let $\solC\in W^{1,1}(B_R)$ and let $l,k$ be any reals such that $l>k$. 
	Then there exists a constant $C\geq 0$ depending only on dimension $N$ such that
	\begin{align*}
		(l-k)\abs{B_R\cap\left[\solC>l\right]}^{1-\frac{1}{N}}\leq C\frac{R^N}{\abs{B_R\cap\left[\solC\leq k\right]}}\int_{B_R\cap\left[l>\solC\geq k\right]}\abs{\nabla \solC}\md x.
	\end{align*}
\end{lemma}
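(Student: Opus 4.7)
My plan is to deduce this from the $L^1$ Sobolev--Poincar\'e inequality applied to a suitable double truncation of $\solC$. The key object is
\[
v := \min\{(\solC - k)_+, \, l - k\},
\]
which lies in $W^{1,1}(B_R)$, vanishes on $A := B_R \cap [\solC \leq k]$, equals the constant $l - k$ on $B_R \cap [\solC \geq l]$, and whose gradient coincides with $\nabla \solC$ on $\{k < \solC < l\}$ and vanishes elsewhere. One may assume $|A| > 0$, since otherwise the right-hand side of the claimed inequality is infinite and the statement is trivial.

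The first step is to apply the scale-invariant $L^1$ Sobolev--Poincar\'e inequality on $B_R$:
\[
\left(\int_{B_R} \abs{v - \bar v}^{N/(N-1)}\right)^{(N-1)/N} \leq C(N) \int_{B_R} \abs{\nabla v},
\]
where $\bar v$ denotes the average of $v$ over $B_R$. Restricting the integrand on the left to $A$, where $v = 0$, yields the bound
\[
\bar v \, |A|^{(N-1)/N} \leq C \int_{B_R} \abs{\nabla v},
\]
which quantitatively controls $\bar v$ in terms of $|A|$ and the gradient.

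Second, since $v \leq l - k$ everywhere, one has $\bar v \leq l - k$ (in fact strictly, because $|A| > 0$), so that on $B_R \cap [\solC > l]$ the quantity $v - \bar v = (l - k) - \bar v$ is non-negative. Restricting the same Sobolev--Poincar\'e inequality to this set and taking $N/(N-1)$-th roots gives
\[
\left((l-k) - \bar v\right)\abs{B_R \cap [\solC > l]}^{(N-1)/N} \leq C \int_{B_R} \abs{\nabla v}.
\]
Adding $\bar v \, \abs{B_R \cap [\solC > l]}^{(N-1)/N}$ to both sides and using the bound for $\bar v$ from the previous step produces
\[
(l-k)\abs{B_R \cap [\solC > l]}^{(N-1)/N} \leq C \left(\frac{\abs{B_R}}{|A|}\right)^{(N-1)/N} \int_{B_R} \abs{\nabla v}.
\]

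To conclude, I will use the elementary estimate $(|B_R|/|A|)^{(N-1)/N} \leq |B_R|/|A| \leq C_N R^N / |A|$ (valid since $|A| \leq |B_R|$), together with the identification of $\int_{B_R}\abs{\nabla v}$ with $\int_{B_R \cap [l > \solC \geq k]} \abs{\nabla \solC}\, dx$ up to a null set where $\solC = k$ or $\solC = l$. The main ``obstacle'' here is really a matter of choosing the right tool: the ordinary $L^1$ Poincar\'e inequality would only produce the exponent $1$ on $|B_R \cap [\solC > l]|$, so obtaining the sharp exponent $1 - 1/N$ forces us to invoke the $L^{N/(N-1)}$ Sobolev--Poincar\'e bound and carefully distribute the measure factors on the right-hand side.
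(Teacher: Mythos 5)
Your proof is correct, and it is a genuine self-contained argument, whereas the paper simply cites this as Lemma~3.5 on p.~55 of Lady\v{z}enskaja--Ural'ceva (the reference given in the proof of the lemma). The classical proof cited there — which is also De~Giorgi's original approach — proceeds via a pointwise representation formula: one writes $v(x)-v(y)$ as a line integral of $\nabla v$, averages over $y\in A:=B_R\cap[\solC\le k]$, passes to polar coordinates to produce the Riesz-type potential $\int_{B_R}|\nabla v(z)|\,|z-x|^{1-N}\,\mathrm{d}z$, integrates over $x\in B_R\cap[\solC>l]$, swaps the order of integration, and bounds the inner integral $\int_{B_R\cap[\solC>l]}|z-x|^{1-N}\,\mathrm{d}x$ by $C\,|B_R\cap[\solC>l]|^{1/N}$ via spherical rearrangement; dividing out that factor produces the exponent $1-\tfrac1N$. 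Your route instead invokes the scale-invariant $L^1$ Sobolev--Poincar\'e inequality for the double truncation $v=\min\{(\solC-k)_+,\,l-k\}$ and extracts both pieces of information (a bound on $\bar v$ in terms of $|A|$, and a bound on $(l-k)-\bar v$ over the superlevel set) by restricting the same left-hand integral to the two complementary regions $A$ and $B_R\cap[\solC>l]$. That exchange is sound: the two restrictions are each individually dominated by the full $L^{N/(N-1)}$ norm, which is the only place the Sobolev exponent enters, and the elementary step $(|B_R|/|A|)^{(N-1)/N}\le|B_R|/|A|\lesssim R^N/|A|$ matches the constant demanded by the statement. What your approach buys is a proof relying only on a black-box Sobolev--Poincar\'e inequality rather than on the Riesz-potential machinery; what the classical approach buys is a slightly sharper constant (the factor $(|B_R|/|A|)^{(N-1)/N}$ rather than $|B_R|/|A|$, which is lost in your last step but is immaterial here since it is absorbed into $C$). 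One small remark: when you identify $\int_{B_R}|\nabla v|$ with $\int_{B_R\cap[l>\solC\ge k]}|\nabla\solC|$, it is cleanest to note that the two agree because $\nabla\solC=0$ a.e.\ on the level set $[\solC=k]$; stating only ``up to a null set where $\solC=k$ or $\solC=l$'' is a little vague, though the conclusion is of course correct.
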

\begin{proof}
	This statement is proven in \cite{ladyUral1968}, Lemma 3.5 on p.\ 55.
\end{proof}

Finally, we have the following lemma on fast geometric convergence.

\begin{lemma}\label{lem:fast.geometric.convergence}
	Assume that $\{y_n\}_{n=1}^\infty$ is a non-negative sequence such that
	\begin{align*}
		y_{n+1}\leq C\, b^ny_n^{1+{a}}
	\end{align*}
	for all $n\in\mathbb{N}$ for some constants $C>0$, $b>1$ and ${a}\in(0,1)$. If $y_0\leq \theta$, then
	\begin{align*}
		y_n\leq \theta\, b^{-n{a}^{-1}},
	\end{align*}
	where $\theta:= C^{-{a}^{-1}}b^{-{a}^ {-2}}$.
\end{lemma}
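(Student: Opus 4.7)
The plan is to prove the estimate $y_n \leq \theta b^{-n/a}$ by straightforward induction on $n$, exploiting the fact that the particular value $\theta = C^{-1/a} b^{-1/a^2}$ is calibrated precisely so that the inductive step closes.

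The base case $n=0$ is immediate: by hypothesis $y_0 \leq \theta = \theta b^0$. For the inductive step, suppose that $y_n \leq \theta b^{-n/a}$. Substituting into the recursion gives
\begin{align*}
y_{n+1} \leq C b^n y_n^{1+a}
\leq C b^n \left(\theta b^{-n/a}\right)^{1+a}
= C \theta^{1+a} b^{n - n(1+a)/a}
= C \theta^{1+a} b^{-n/a}.
\end{align*}
To conclude the induction one needs $C \theta^{1+a} b^{-n/a} \leq \theta b^{-(n+1)/a}$, which simplifies to $C \theta^a \leq b^{-1/a}$, i.e.\ $\theta \leq C^{-1/a} b^{-1/a^2}$. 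This is exactly the definition of $\theta$, so the inductive step closes with equality in the worst case.

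There is essentially no obstacle: the only subtle point is motivating the specific form of $\theta$, but since it is already prescribed in the statement, one simply verifies that the arithmetic of the exponents in $b$ works out. The key computation $n - n(1+a)/a = -n/a$ is where the geometric decay rate $b^{-1/a}$ emerges, and the requirement $a \in (0,1)$ is not actually used in the induction itself (only $a > 0$ is needed for the exponent $1/a$ to be finite); the hypothesis $a \in (0,1)$ is simply what is produced by the applications of the lemma elsewhere in the paper.
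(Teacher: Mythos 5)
Your proof is correct and is essentially the standard induction argument; the paper itself does not give a proof but simply cites Lady\v{z}enskaja--Ural'ceva (Lemma 4.7, p.\ 66), where the same induction is carried out. Your aside that only $a>0$ is actually needed (not $a\in(0,1)$) is also accurate.
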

\begin{proof}
	A simple proof by induction can by found in \cite{ladyUral1968}, Lemma 4.7 on page 66.
\end{proof}
\section{Proof of the De Giorgi-type Lemma} \label{sect:Proof.of.DeGiorgiLemma}
We prove each of the two alternatives in \Cref{lem:DeGiorgi-type} separately.
\subsection{The first alternative}
We assume the hypotheses of \Cref{lem:DeGiorgi-type} are satisfied, i.e.\ $\omega>0$ and $R\in(0,R_{\mathrm{max}}]$ such that \eqref{eq:cond.intrinsic.scaled.cylinder.in.domain}, \eqref{eq:cond.oscillation.recursion.hypothesis}, \eqref{eq:cond.inf.small} and \eqref{eq:oscillation.Really.Large} hold.
Note that \eqref{eq:cond.oscillation.recursion.hypothesis} and \eqref{eq:cond.inf.small} imply $\mu_+\leq \frac{5}{4}\omega$.
\begin{proposition}\label{prop:DeGiorgi.first.alternative}
	There exists a $\nu_0\in (0,1)$ depending on $N$, $c_1$, $c_2$, $m$ and $\LipBoundSource$ such that, if \eqref{eq:DeGiorgi-type.alternative.I} holds,	then $\solA> \mu_- + \frac{\omega}{2}$ a.e.\ in $Q_\omega(\frac{R}{2})$.
\end{proposition}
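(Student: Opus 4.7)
The plan is a classical De Giorgi iteration on nested intrinsically scaled cylinders with decreasing truncation levels, following the blueprint of \cite{DiBen1984,Urb08}. Set $R_n := \tfrac{R}{2}(1 + 2^{-n})$ and $Q_n := Q_\omega(R_n)$, so that $R_n \downarrow R/2$ and $Q_n \downarrow Q_\omega(R/2)$. Choose the decreasing sequence of truncation levels $k_n := \mu_- + \tfrac{\omega}{4} + \tfrac{\omega}{2^{n+2}}$, so that $k_0 = \mu_- + \omega/2$ matches the level in the hypothesis \eqref{eq:DeGiorgi-type.alternative.I}, while $k_n$ converges monotonically to a limit just below $\mu_- + \omega/2$. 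Introduce $A_n := Q_n \cap [\solA < k_n]$ and the normalized bad-set fraction $Y_n := |A_n|/|Q_n|$; the hypothesis \eqref{eq:DeGiorgi-type.alternative.I} reads exactly $Y_0 < \nu_0$. The target is to drive $Y_n \to 0$ geometrically via a recursive inequality, so that the pointwise lower bound $\solA > \mu_- + \omega/2$ holds a.e.\ in $Q_\omega(R/2)$ as claimed.

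To derive the recursion, pick cutoffs $\zeta_n \in C_c^\infty(Q_n)$, equal to $1$ on $Q_{n+1}$, with $|\nabla \zeta_n| \leq C 2^n/R$, $|\partial_t \zeta_n| \leq C 4^n \omega^{m-1}/R^2$ and $|\Delta \zeta_n| \leq C 4^n/R^2$. Apply \Cref{prop:interior.energy.estimate.lower.trunc} with $k = k_n$ and lower truncation $l = k_{n+1}$: since $l, k \sim \omega$ are bounded away from zero, the coercivity factor $l^{m-1} \sim \omega^{m-1}$ on the left-hand side pairs exactly with the intrinsic time scale $\omega^{1-m} R^2$, and each of the four right-hand side terms reduces, after the intrinsic scaling, to a common multiple of $16^n \omega^{m+1} R^{-2} |A_n|$ (for the $|\Delta \zeta_n|$ term the inclusion $[\solA < l] \subseteq A_n$ is used). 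Then invoke the parabolic embedding \Cref{lem:parab.space.embedding} applied to $(\solA_{(l)} - k_{n+1})_- \zeta_n \in V^2_0(Q_n)$, together with the pointwise lower bound $(k_n - k_{n+1}) \chi_{A_{n+1}} \leq (\solA_{(l)} - k_{n+1})_-$ on $Q_{n+1}$, to convert the $V^2$-estimate into a measure estimate for $|A_{n+1}|$. After dividing by $|Q_{n+1}|$, this yields a recursion of the form
\[
Y_{n+1} \leq C_* \cdot 16^n \cdot Y_n^{1+a}, \qquad a := \tfrac{2}{N+2}.
\]

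The main obstacle, genuinely new compared with the pure porous medium argument, is absorbing the source-term contribution $\LipBoundSource l^{-m_0}(k-l)|A_n| \sim \LipBoundSource \omega^{1-m_0}|A_n|$ coming from the last term in \Cref{prop:interior.energy.estimate.lower.trunc} into the clean bound $\omega^{m+1} R^{-2} |A_n|$. This requires the structural inequality $\LipBoundSource R^2 \leq \omega^{m+m_0}$, which is ensured by combining the hypothesis \eqref{eq:oscillation.Really.Large} ($\omega^m \geq R$, so $\omega^{m+m_0} \geq R \cdot \omega^{m_0} \geq R^{1 + m_0/m}$) with a universal smallness condition $R \leq R_{\max}$. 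This is precisely how $R_{\max}$ is quantitatively defined in \Cref{lem:DeGiorgi-type}, depending only on $\LipBoundSource, m, m_0, N$.

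Finally, \Cref{lem:fast.geometric.convergence} supplies a threshold $\theta := C_*^{-1/a} \cdot 16^{-1/a^2}$, depending only on $N, c_1, c_2, m, \LipBoundSource, m_0$. Setting $\nu_0 := \theta$ and invoking hypothesis \eqref{eq:DeGiorgi-type.alternative.I}, we obtain $Y_n \to 0$, so that the measure of the bad set $[\solA \leq \mu_- + \omega/2]$ inside the limit cylinder $Q_\omega(R/2)$ vanishes, yielding the pointwise lower bound $\solA > \mu_- + \omega/2$ a.e.\ in $Q_\omega(R/2)$ as claimed.
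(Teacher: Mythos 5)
Your proposal follows essentially the same route as the paper: nested cylinders $Q_n=Q_\omega(R_n)$ with $R_n=\tfrac{R}{2}(1+2^{-n})$, decreasing levels $k_n=\mu_-+\tfrac{\omega}{4}+\tfrac{\omega}{2^{n+2}}$, the interior energy estimate with a lower truncation to generate coercivity $c_1 l^{m-1}\sim\omega^{m-1}$, the change of variables $\tau=\omega^{m-1}t$, the parabolic embedding, and fast geometric convergence. You also correctly isolate the genuinely new step: absorbing the $\LipBoundSource l^{-m_0}(k-l)$ term requires $R^2\omega^{-(m+m_0)}\leq 1$, which is obtained from \eqref{eq:oscillation.Really.Large} together with $R\leq R_{\mathrm{max}}$. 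The only substantive deviation is the choice of the lower truncation level: you set $l=k_{n+1}$, which varies with $n$, while the paper fixes $l=\mu_-+\tfrac{\omega}{4}$ once and for all. Both give the same orders in $n$, since $k_n-k_{n+1}\sim\omega/2^n$ and $l\geq\mu_-+\omega/4\geq\omega/4$ either way. (Minor typo: the pointwise lower bound you invoke should read $(k_n-k_{n+1})\chi_{A_{n+1}}\leq(\solA_{(l)}-k_n)_-$, not $(\solA_{(l)}-k_{n+1})_-$, which is identically zero.)

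The one genuine flaw is in the conclusion. Your sequence $k_n$ converges to $\mu_-+\tfrac{\omega}{4}$, not, as you write, ``to a limit just below $\mu_-+\tfrac{\omega}{2}$''. Driving $Y_n\to 0$ therefore yields $\bigl|Q_\omega(\tfrac{R}{2})\cap[\solA\leq\mu_-+\tfrac{\omega}{4}]\bigr|=0$, i.e.\ $\solA>\mu_-+\tfrac{\omega}{4}$ a.e.\ in $Q_\omega(\tfrac{R}{2})$, \textbf{not} $\solA>\mu_-+\tfrac{\omega}{2}$: the level $\mu_-+\tfrac{\omega}{2}$ is only the \emph{starting} level $k_0$, and the iteration must lose half the initial gap to drive the measure to zero. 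The same iteration cannot give the stronger conclusion $\mu_-+\tfrac{\omega}{2}$, since by the time $n$ is large the set $A_n$ is measured at a level well below $\mu_-+\tfrac{\omega}{2}$. (To be fair, \Cref{prop:DeGiorgi.first.alternative} as printed carries this same typo; compare with \Cref{lem:DeGiorgi-type}(i) and with the oscillation improvement $\mu_+-\mu_--\tfrac{\omega}{4}$ in \eqref{eq:improvement.of.oscillation.parameter.estimate}, both of which reflect the correct level $\mu_-+\tfrac{\omega}{4}$.) You should either correct the claimed conclusion to $\mu_-+\tfrac{\omega}{4}$ (matching the De Giorgi-type Lemma), or, if you really want $\mu_-+\tfrac{\omega}{2}$, adjust the level sequence to $k_n=\mu_-+\tfrac{\omega}{2}+\tfrac{\omega}{2^{n+2}}$ (starting above $\mu_-+\tfrac{\omega}{2}$ and decreasing to it), which would require correspondingly modifying the hypothesis on $Y_0$ so that it is still compatible with \eqref{eq:DeGiorgi-type.alternative.I}.
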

\begin{proof}
	We define sequences
	\[
	R_n=\frac{R}{2}+\frac{R}{2^{n+1}},\quad k_n=\mu_-+\frac{\omega}{4}+\frac{\omega}{2^{n+2}}
	\]
	and construct the family of nested and shrinking cylinders ${Q}_n=Q_\omega(R_n)$.
	Let $\zeta_n$ be a smooth cut-off function corresponding to the inclusion ${Q}_{n+1}\subset {Q}_{n}$, i.e.\ $0\leq\zeta\leq 1$, $\zeta_n$ vanishes on the parabolic boundary of ${Q}_{n}$, $\zeta_n\equiv 1$ in ${Q}_{n+1}$ and
	\[
	\abs{\nabla\zeta_n}\leq\frac{2^{n+2}}{R},\quad\abs{\Delta\zeta_n}\leq \frac{2^{2(n+2)}}{R^2},\quad 0\leq \zeta_{n,t}\leq 2^{2(n+2)}\frac{\omega^{m-1}}{R^2}.
	\]
	We use \Cref{prop:interior.energy.estimate.lower.trunc}, substituting $R=R_n$, $k=k_n$ and $l=\mu_-+\frac{\omega}{4}$, and we write $\solA_\omega={\solA_{(l)}}=\max\left\{ \solA,\mu_-+\frac{\omega}{4} \right\}$.
	Moreover, we have the following inequalities:
	\begin{gather*}
		\frac{\omega}{4}\leq l; \quad 
		(k-l)(k+l)=\frac{\omega}{2^{n+2}}(2\mu_-+\frac{\omega}{2}+\frac{\omega}{2^{n+2}})\leq\frac{\omega^2}{2^{n}}; \\
		(k-l)^2k^{m-1} \leq \frac{\omega^2}{2^{2(n+2)}}\omega^{m-1}\leq \frac{\omega^{m+1}}{2^{2n}}; \; (k-l)k^m\leq \frac{\omega}{2^{n+2}}\omega^{m}\leq \frac{\omega^{m+1}}{2^{n}}; \\
		\LipBoundSource l^{-m_0} (k-l) \leq \LipBoundSource 4^{m_0}\omega^{-m_0} \frac{\omega}{2^{n}}.
	\end{gather*}
	After absorbing $4^{m-1}$, $c_1^{-1}$, $c_2$, $m^{-1}$, $\LipBoundSource$ and $4^{m_0}$ into $C$, the interior energy inequality in \Cref{prop:interior.energy.estimate.lower.trunc} reads
	\begin{equation*}
		\begin{aligned}
			&\norm{(\solA_\omega-{k_n})_{-}{\zeta}_n}_{L^\infty({t},{\tau};L^2(B(R_{n})))}^2+\omega^{m-1}\norm{\nabla((\solA_{\omega}-{k_n})_{-}{\zeta}_n)}_{L^2(Q_{n})}^2\\
			&\quad\leq C(c_1,c_2,m,\LipBoundSource,m_0)\Bigl( 2^{n}\frac{\omega^{m+1}}{R^2}\iint_{{Q}_n}\chi_{[\solA<k_n]}+\frac{\omega^{m+1}}{R^2}\iint_{{Q}_n}\chi_{[\solA< k_n]} \\
			&\qquad\qquad\qquad\qquad\qquad +2^{n}\frac{\omega^{m+1}}{R^2}\iint_{{Q}_n}\chi_{[\solA<\mu_-+\frac{\omega}{4}]}+\frac{\omega^{1-m_0}}{2^{n}}\iint_{{Q}_n}\chi_{[\solA<k_n]} \Bigr) \\
			&\quad\leq C(c_1,c_2,m,\LipBoundSource,m_0)\left(2^{n}\frac{\omega^{m+1}}{R^2}\iint_{{Q}_n}\chi_{[\solA<k_n]}+ 2^{n}\frac{\omega^{m+1}}{R^2} \left(\frac{R^2}{\omega^{m+m_0}}\right)\iint_{{Q}_n}\chi_{[\solA<k_n]}\right)\\
			&\quad\leq C(c_1,c_2,m,\LipBoundSource,m_0)\ 2^{n}\frac{\omega^{m+1}}{R^2}\iint_{{Q}_n}\chi_{[\solA<k_n]},
		\end{aligned}
	\end{equation*}
	where the second inequality is obtained by combining the first three terms and the fact that $2^{-n}\leq 1 \leq 2^n$.
	The third inequality is derived by noting that \eqref{eq:oscillation.Really.Large} implies that
	\[
	\frac{R^2}{\omega^{m+m_0}}\leq R^{1-\frac{m_0}{m}} \leq 1
	\]
	due to $0<m_0<m$ by assumption.
	
	We introduce the change of variables 
	\[
	\tau=\omega^{m-1}t,
	\]
	denote by ${\tilde{Q}_n}$ the transformed cylinder $B_{R_n}\times(-R_n^2,0)$ and define the transformed function
	\[
	\solAvar_\omega(x,\tau):=\solA_\omega(x,\omega^{1-m}\tau), \quad \tilde{\zeta}(x,\tau):=\zeta(x,\omega^{1-m}\tau).
	\]
	The above estimate transforms into
	\begin{equation*}
		\begin{aligned}
			&\norm{(\solAvar_\omega-{k_n})_{-}\tilde{\zeta}_n}_{L^\infty(-R_{n}^2,0;L^2(B(R_{n})))}^2+\norm{\nabla((\solAvar_{\omega}-{k_n})_{-}\tilde{\zeta}_n)}_{L^2(\tilde{Q}_{n})}^2\\
			&\quad\leq  C(c_1,c_2,m,\LipBoundSource,m_0)\ 2^{n}\frac{\omega^2}{R^2}\iint_{\tilde{Q}_n}\chi_{[\solAvar<k_n]},
		\end{aligned}
	\end{equation*}
	in other words
	\[
	\norm{(\solAvar_\omega-{k_n})_{-}\tilde{\zeta}_n}_{V^2(\tilde{Q}_{n})}^2\leq  C(c_1,c_2,m,\LipBoundSource)\ 2^{n}\frac{\omega^2}{R^2}|{\tilde{Q}_{n}\cap[\solAvar<k_n]}|.
	\]
	
	We apply \Cref{lem:parab.space.embedding} to the function $\solC=(\solAvar_\omega-{k_n})_{-}\tilde{\zeta}_n\in V^2_0(\tilde{Q}_n)$ to estimate its $L^2$-norm in terms of its $V^2$-norm to obtain
	\begin{align*}
		&\frac{\omega^2}{2^{2(n+3)}}|{\tilde{Q}_{n+1}\cap[\solAvar<k_{n+1}]}|=(k_{n}-k_{n+1})^2|{\tilde{Q}_{n+1}\cap[\solAvar<k_{n+1}]}|\\
		&\leq \norm{(\solAvar_\omega-{k_n})_{-}\tilde{\zeta}_n}_{L^2(\tilde{Q}_{n})}^2\leq C(N) |\tilde{Q}_{n}\cap[\solAvar<k_n]|^{\frac{2}{N+2}}\norm{(\solAvar_\omega-{k_n})_{-}\tilde{\zeta}_n}_{V^2(\tilde{Q}_{n})}^2\\
		&\leq C(N,c_1,c_2,m,\LipBoundSource,m_0)\ 2^{n}\frac{\omega^2}{R^2}|\tilde{Q}_{n}\cap[\solAvar<k_n]|^{1+\frac{2}{N+2}}.
	\end{align*}
	
	Next, let us set ${y}_n=\frac{|\tilde{Q}_{n}\cap[\solAvar<k_n]|}{|\tilde{Q}_{n}|}$ and note that $|\tilde{Q}_{n}|=\abs{B_1}R_n^{N+2}$, where $B_1$ is the $N$-dimensional unit sphere, so
	\[
	\begin{aligned}
		\frac{|\tilde{Q}_{n}|^{1+\frac{2}{N+2}}}{|\tilde{Q}_{n+1}|}&={\abs{B_1}}^{\frac{2}{N+2}}\frac{(R_n^{N+2})^{1+\frac{2}{N+2}}}{R_{n+1}^{N+2}}=C(N)\ R_n^2\left(\frac{R_n}{R_{n+1}}\right)^{N+2}\\
		&=C(N)\ R_n^2\left(\frac{\frac{1}{2}+\frac{1}{2^{n+2}}}{\frac{1}{2}+\frac{1}{2^{n+3}}}\right)^{N+2}\leq C(N)\ 2^{N+2}R^2.
	\end{aligned}
	\]
	We conclude that
	\[
	y_{n+1}\leq C(N,c_1,c_2,m,\LipBoundSource,m_0)\ 2^{3n} y_{n}^{1+\frac{2}{N+2}}.
	\]
	By \Cref{lem:fast.geometric.convergence}, if
	\[
	y_0\leq  C(N,c_1,c_2,m,\LipBoundSource,m_0)^{-\frac{N+2}{2}}2^{-(N+2)^2},
	\]
	then $y_n\to 0$ as $n\to\infty$.
	Let us pick 
	\begin{equation}\label{eq:def.nu_0}
		\nu_0:= C(N,c_1,c_2,m,\LipBoundSource,m_0)^{-\frac{N+2}{2}}2^{-(N+2)^2},
	\end{equation}
	then the statement is proven, since $y_n\to0$ implies that 
	\[
	\left| {{Q}_n\cap \left[\solA<\mu_-+\frac{\omega}{4}+\frac{\omega}{2^{n+2}}\right]} \right|\to 0
	\]
	as $n\to \infty$, and therefore $\abs{{Q}\left( \frac{R}{2} \right) \cap[\solA\leq\mu_-+\frac{\omega}{2}]}=0$.
\end{proof}

\subsection{The second alternative}
As in the previous subsection we assume that the hypotheses of \Cref{lem:DeGiorgi-type} are satisfied.
Observe that \eqref{eq:DeGiorgi-type.alternative.I} is violated if and only if \eqref{eq:DeGiorgi-type.alternative.II} is satisfied, because
\begin{equation*}
	\abs{Q_\omega(R)\cap\left[\solA<\mu_-+\frac{\omega}{2}\right]}\geq\nu_0\abs{Q_\omega(R)}\\
\end{equation*}
if and only if
\begin{equation*}
	\begin{aligned}
		\abs{Q_\omega(R)\cap\left[\solA\geq\mu_-+\frac{\omega}{2}\right]}&=\abs{Q_\omega(R)}-\abs{Q_\omega(R)\cap\left[\solA<\mu_-+\frac{\omega}{2}\right]}\\
		&\leq(1-\nu_0)\abs{Q_\omega(R)}.
	\end{aligned}
\end{equation*}
This justifies calling the two alternatives in \Cref{lem:DeGiorgi-type} a dichotomy.

We prove the second alternative of \Cref{lem:DeGiorgi-type} in two steps.
In the first step we generalize the condition \eqref{eq:DeGiorgi-type.alternative.II}.
In particular, we show that we may replace the factor $1-\nu_0$ by any $\nu\in(0,1)$ on the right-hand side of \eqref{eq:DeGiorgi-type.alternative.II} at the cost of shrinking the set involved in the left-hand side appropriately.
This is necessary due to the fact that $\nu_0$ has been fixed by \Cref{prop:DeGiorgi.first.alternative}.
Indeed, $\nu_0$ was chosen such that \Cref{lem:fast.geometric.convergence} could be applied.
In the second alternative we do not have this freedom of choice and therefore this additional step is required.
Here, we use the interior logarithmic estimate, that is, \Cref{prop:int.log.estimate}.
In the second step we prove the second alternative in the same manner as in \Cref{prop:DeGiorgi.first.alternative}.

We start with the first step by proving an auxiliary estimate. 
Recall that ${\bar{t}_0}=-\omega^{1-m}R^2$.
\begin{lemma}\label{lem:Alternative.condition.specific.tau}
	Let $\nu_0$ be given by \Cref{prop:DeGiorgi.first.alternative} and suppose \eqref{eq:DeGiorgi-type.alternative.II} holds.
	Then there exists $\tau\in [{\bar{t}_0},{\frac{\nu_0}{2}\bar{t}_0}]$ such that
	\[
	\abs{{B_R}\cap\left[\solA(\tau)>\mu_-+\frac{\omega}{2}\right]}\leq \frac{1-\nu_0}{1-\frac{\nu_0}{2}}\abs{{B_R}}.
	\]
\end{lemma}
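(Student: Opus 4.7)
The plan is a short pigeonhole / mean-value argument based on Fubini's theorem, reducing the claim to the global density bound \eqref{eq:DeGiorgi-type.alternative.II}.

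First I would argue by contradiction. Set $I := [\bar{t}_0, \tfrac{\nu_0}{2}\bar{t}_0]$ and suppose that for every $\tau \in I$ the conclusion fails, so that
\[
\abs{{B_R}\cap\left[\solA(\tau)>\mu_-+\tfrac{\omega}{2}\right]} > \frac{1-\nu_0}{1-\frac{\nu_0}{2}}\abs{{B_R}}.
\]
Here the slice $[\solA(\tau) > k] \subseteq B_R$ is well-defined up to a null set for every $\tau \in (0,T)$ by \Cref{rem:defined.all.time}, so this pointwise-in-$\tau$ statement makes sense and the map $\tau \mapsto \abs{{B_R}\cap[\solA(\tau) > k]}$ is measurable by Fubini.

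Next I would integrate this bound over $I$. Since $\bar{t}_0 < 0$, the length of $I$ equals $\tfrac{\nu_0}{2}\bar{t}_0 - \bar{t}_0 = (1-\tfrac{\nu_0}{2})|\bar{t}_0|$, and $\abs{Q_\omega(R)} = |\bar{t}_0|\abs{{B_R}}$. Using Fubini and the trivial inclusion $[\solA(\tau) > k] \subseteq [\solA(\tau) \geq k]$, I would estimate
\[
\abs{Q_\omega(R) \cap [\solA \geq \mu_- + \tfrac{\omega}{2}]} \geq \int_{I} \abs{{B_R}\cap[\solA(\tau) > \mu_-+\tfrac{\omega}{2}]}\,\md\tau > \frac{1-\nu_0}{1-\frac{\nu_0}{2}}\,\bigl(1-\tfrac{\nu_0}{2}\bigr)|\bar{t}_0|\abs{{B_R}} = (1-\nu_0)\abs{Q_\omega(R)},
\]
which contradicts \eqref{eq:DeGiorgi-type.alternative.II}. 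Hence there must exist at least one $\tau \in I$ with the desired bound.

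No real obstacle is expected here: this is a one-step averaging argument, and the only points to watch are the sign of $\bar{t}_0$ (so that the length of $I$ is computed correctly) and the fact that the factor $\frac{1-\nu_0}{1-\nu_0/2}$ is chosen precisely so that multiplying by the relative length $(1-\nu_0/2)$ of $I$ inside $[\bar{t}_0,0]$ returns $(1-\nu_0)$, matching the density threshold in \eqref{eq:DeGiorgi-type.alternative.II}.
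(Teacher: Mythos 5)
Your proof is correct and follows essentially the same averaging/pigeonhole argument as the paper; the only cosmetic difference is that you use $[\solA(\tau)>k]\subseteq[\solA(\tau)\geq k]$ to contradict \eqref{eq:DeGiorgi-type.alternative.II} directly, whereas the paper first derives \eqref{eq:DeGiorgi-type.alternative.I} and then invokes the dichotomy. Both conclusions are immediate, so this is the same proof.
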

\begin{proof}
	Suppose that the inequality does not hold for all $\tau\in[{\bar{t}_0},{\frac{\nu_0}{2}\bar{t}_0}]$, then
	\begin{align*}
		\abs{{Q_\omega(R)}\cap\left[\solA>\mu_-+\frac{\omega}{2}\right]}&\geq \int_{{\bar{t}_0}}^{{\frac{\nu_0}{2}\bar{t}_0}}\abs{{B_R}\cap\left[\solA(\tau)>\mu_-+\frac{\omega}{2}\right]}\md \tau\\
		&>(1-\tfrac{\nu_0}{2})\omega^{-(m-1)}R^2\frac{1-\nu_0}{1-\frac{\nu_0}{2}}\abs{{B_R}}=(1-\nu_0)\abs{{Q_\omega(R)}}.
	\end{align*}
	This inequality implies \eqref{eq:DeGiorgi-type.alternative.I}, since
	\begin{align*}
		&\abs{{Q_\omega(R)}\cap\left[\solA<\mu_-+\frac{\omega}{2}\right]}\leq\abs{{Q_\omega(R)}\cap\left[\solA\leq\mu_-+\frac{\omega}{2}\right]}\\
		&\quad=\abs{{Q_\omega(R)}}-\abs{{Q_\omega(R)}\cap\left[\solA>\mu_-+\frac{\omega}{2}\right]}<\nu_0\abs{{Q_\omega(R)}},
	\end{align*}
	so we have a contradiction, which proves the lemma.
\end{proof}
Next, we aim to extend \Cref{lem:Alternative.condition.specific.tau} to the interval $[{\frac{\nu_0}{2}\bar{t}_0},0]$ instead of only a specific $\tau$ by reducing the size of the set on the left-hand side.
\begin{corollary}\label{cor:Alternative.condition.time-interval}
	Let $\nu_0$ be given by \Cref{prop:DeGiorgi.first.alternative} and suppose \eqref{eq:DeGiorgi-type.alternative.II} holds.
	There exists an integer $n_*$ depending on $N$, $c_1$, $c_2$, $m$ and $\LipBoundSource$ such that
	\[
	\abs{{B_R}\cap\left[\solA(t)\geq\mu_-+\omega-\frac{\omega}{2^{n_*}}\right]}\leq \left(1-\left(\frac{\nu_0}{2}\right)^2\right)\abs{{B_R}}
	\]
	for all $t\in[{\frac{\nu_0}{2}\bar{t}_0},0]$.
\end{corollary}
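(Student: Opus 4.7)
The plan is to propagate the seed bound from \Cref{lem:Alternative.condition.specific.tau} forward in time using the interior logarithmic estimate \Cref{prop:int.log.estimate}. Let $t_\star \in [\bar{t}_0, \frac{\nu_0}{2}\bar{t}_0]$ be the time given by \Cref{lem:Alternative.condition.specific.tau}. I apply \Cref{prop:int.log.estimate} with $k=1$, $l = n_*$ (to be fixed), taking $t_\star$ as the earlier time and an arbitrary $t \in [\frac{\nu_0}{2}\bar{t}_0, 0]$ as the later time; the level $\mu_- + \omega - \omega/2^k = \mu_- + \omega/2$ at $k=1$ is precisely the level controlled at $t_\star$ by \Cref{lem:Alternative.condition.specific.tau}.

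For the cut-off I take $\zeta \in C^\infty_c(B_R)$ with $\zeta \equiv 1$ on $B_{(1-\sigma)R}$ and $|\nabla \zeta| \leq C/(\sigma R)$, for a parameter $\sigma \in (0,1)$ to be fixed. Using $\mu_+ \leq \frac{5}{4}\omega$ to handle the gradient term, and the smallness assumption \eqref{eq:oscillation.Really.Large} in the form $R^2/\omega^{m+m_0} \leq R^{1 - m_0/m}$ (with exponent strictly positive because $m_0 < m$) to handle the source term, then dividing by $(n_*-2)^2$ and adding $|B_R \setminus B_{(1-\sigma)R}| \leq C_N \sigma |B_R|$ to pass from $B_{(1-\sigma)R}$ back to $B_R$ on the left, I arrive at an estimate of the shape
\[
\frac{|B_R \cap [\solA(t) \geq \mu_- + \omega - \omega/2^{n_*}]|}{|B_R|} \leq \frac{(n_*-1)^2}{(n_*-2)^2}\frac{1-\nu_0}{1-\nu_0/2} + \frac{C(n_*-1)}{\sigma^2 (n_*-2)^2} + \frac{C\, 2^{n_*}}{(n_*-2)^2} R^{1-m_0/m} + C_N \sigma.
\]

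It remains to tune the parameters so that this right-hand side is at most $1 - (\nu_0/2)^2$. Writing $\frac{1-\nu_0}{1-\nu_0/2} = 1 - \frac{\nu_0/2}{1-\nu_0/2}$ and noting that $\frac{\nu_0/2}{1-\nu_0/2} > \nu_0/2 > (\nu_0/2)^2$ exhibits a strictly positive gap $\delta(\nu_0) := \frac{\nu_0/2}{1-\nu_0/2} - (\nu_0/2)^2$. I fix $\sigma$ small enough so that $C_N \sigma \leq \delta/4$; then $n_*$ large enough so that both $\frac{2n_*-3}{(n_*-2)^2}\frac{1-\nu_0}{1-\nu_0/2}$ and $C(n_*-1)/[\sigma^2(n_*-2)^2]$ are at most $\delta/4$; and finally $R_{\mathrm{max}}$ small enough so that $C\, 2^{n_*} (n_*-2)^{-2} R^{1-m_0/m} \leq \delta/4$ for every $R \leq R_{\mathrm{max}}$. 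Summing the four contributions yields the claimed bound.

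The main obstacle is the forced order of these parameter choices: $n_*$ must be fixed only after $\sigma$ (because of the $\sigma^{-2}$ appearing in the gradient term), and $R_{\mathrm{max}}$ must be fixed only after $n_*$ (because of the $2^{n_*}$ factor appearing in the reaction-term contribution). This is precisely why the statement of \Cref{lem:DeGiorgi-type} needs both the smallness condition \eqref{eq:oscillation.Really.Large} and the constant $R_{\mathrm{max}}$, whereas no such restriction appears in the source-free analysis of \cite{DiBen1984}.
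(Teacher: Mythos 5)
Your proof is correct and follows essentially the same route as the paper: you propagate the seed bound of \Cref{lem:Alternative.condition.specific.tau} forward in time via the interior logarithmic estimate \Cref{prop:int.log.estimate} with $k=1$, then absorb the thin annulus $B_R\setminus B_{(1-\sigma)R}$, and finally tune the parameters in the same forced order ($\sigma$, then $n_*$, then $R_{\mathrm{max}}$). The only difference is cosmetic bookkeeping: you split a single slack $\delta(\nu_0)=\frac{\nu_0/2}{1-\nu_0/2}-(\nu_0/2)^2$ into four equal pieces, whereas the paper uses the identity $1-\nu_0^2=\bigl(1-(\nu_0/2)^2\bigr)-\tfrac34\nu_0^2$ and distributes $\tfrac34\nu_0^2$ over three error terms; both arrive at the same bound.
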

\begin{proof}
	We use the interior logarithmic estimate, i.e.\ \Cref{prop:int.log.estimate}.
	Let $\lambda\in(0,1)$, to be determined later, consider the ball $B_{(1-\lambda)R}$ and let $\zeta$ be the corresponding cut-off function, i.e.\ $\zeta\in C^\infty_c({B_R})$ such that
	\[
	0\leq\zeta\leq 1,\quad\zeta\equiv 1\ \text{in}\ B_{(1-\lambda)R},\quad\abs{\nabla\zeta}\leq\frac{C}{\lambda R}.
	\]
	Now, let $k,l\in\IN$, $l>k$.
	Then \Cref{prop:int.log.estimate} gives
	\begin{equation*}
		\begin{aligned}
			&(l-k-1)^2\abs{B_{(1-\lambda)R}\cap\left[\solA(t)>\mu_-+\omega-\tfrac{\omega}{2^l}\right]}\\
			&\leq (l-k)^2\abs{{B_R}\cap\left[\solA(\tau)>\mu_-+\omega-\tfrac{\omega}{2^k}\right]}+ C\left( \frac{(l-k)}{\lambda^2}\abs{{B_R}}+ \LipBoundSource \left(\frac{\omega}{2}\right)^{-m_0} 2^{l}\frac{R^2}{\omega^m}\abs{{B_R}}\right),
		\end{aligned}
	\end{equation*}
	where we used that $\mu_+^{m-1}\frac{1}{\omega^{m-1}}\leq C$ by the assumption $\mu_+\leq\frac{5}{4}\omega$.
	Moreover, $\mu_-+\omega-\frac{\omega}{2^k}\geq \mu_-+\frac{\omega}{2}$, so \Cref{lem:Alternative.condition.specific.tau} implies that
	\[
	\begin{aligned}
		&(l-k-1)^2\abs{B_{(1-\lambda)R}\cap\left[\solA(t)>\mu_-+\omega-\tfrac{\omega}{2^l}\right]}\\
		&\leq (l-k)^2\frac{1-\nu_0}{1-\frac{\nu_0}{2}}\abs{{B_R}}+C\ \frac{(l-k)}{\lambda^2}\abs{{B_R}}+C(\LipBoundSource,m_0)\, 2^{l+1}\frac{R^2}{\omega^{m+m_0}}\abs{{B_R}}.
	\end{aligned}
	\]
	Next, note that $\abs{{B_R}\backslash B_{(1-\lambda)R}}=\abs{{B_R}}-\abs{B_{(1-\lambda)R}}\leq\lambda N\abs{{B_R}}$, because the volume of an $N$-dimensional spherical shell of width $\lambda$ is proportional to
	\begin{align*}
		R^N-(1-\lambda)^NR^N&=(1-b^N)R^N=(1-b)(1+b+b^2+\ldots+b^{N_1})R^N\\
		&\leq(1-b) NR^N=\lambda NR^N,\quad b:=1-\lambda.
	\end{align*}
	We apply this to obtain the estimate
	\begin{equation*}
		\begin{aligned}
			&\abs{{B_R}\cap\left[\solA(t)>\mu_-+\omega-\tfrac{\omega}{2^l}\right]} \leq \\
			& \left(\left(\frac{l-k}{l-k-1}\right)^2\frac{1-\nu_0}{1-\frac{\nu_0}{2}}+C \frac{(l-k)}{(l-k-1)^2}\frac{1}{\lambda^2}+  \frac{C(\LipBoundSource,m_0) \, 2^{l+1}}{(l-k-1)^2}\frac{R^2}{\omega^{m+m_0}}+C N\lambda\right)\abs{{B_R}}.
		\end{aligned}
	\end{equation*}
	
	Let us set $k=1$ and pick $\lambda$ and $l$ in an appropriate manner to obtain the desired estimate.
	First, we chose $\lambda\in(0,1)$ small enough such that
	\[
	C N\lambda<\frac{1}{4}\nu_0^2.
	\]
	Then, we pick $l$ large enough such that
	\[
	\left(\frac{l-1}{l-2}\right)^2\leq\left(1-\frac{\nu_0}{2}\right)(1+\nu_0),
	\]
	which is possible, because $\left(1-\frac{\nu_0}{2}\right)(1+\nu_0)>1$ and $\left(\frac{l-1}{l-2}\right)^2\to 1$ as $l\to\infty$.
	So we can bound the first term by
	\[
	(1-\nu_0^2)\abs{{B_R}}=\left(1-\left(\frac{\nu_0}{2}\right)^2\right)\abs{{B_R}}-\frac{3}{4}\nu_0^2\abs{{B_R}}.
	\]
	If needed, we pick $l$ larger such that
	\[
	C \frac{(l-1)}{(l-2)^2}\frac{1}{\lambda^2}\leq\frac{1}{4}\nu_0^2.
	\]
	Finally, we have
	\[
	C(\LipBoundSource,m_0) \, \frac{2^{l+1}}{(l-2)^2}\frac{R^2}{\omega^{m+m_0}}\leq \frac{1}{4}\nu_0^2,
	\]
	because we assume that $R\leq R_\mathrm{max}$.
	Indeed, recall that \eqref{eq:oscillation.Really.Large} is satisfied by assumption, that is, $\omega^{-m-m_0} R^2\leq R^{(m-m_0)/m}$, so we set
	\begin{equation}\label{eq:define.Rmax}
		R_{\mathrm{max}}=\left(\frac{\nu_0^2}{4C}\frac{(l-2)^2}{2^{l}}\right)^{\frac{m}{m-m_0}}
	\end{equation}
	to ensure that this bound holds.
	Set $n_*=l$ to finish the proof.	
\end{proof}

The conclusion of the first step in the proof of the second alternative of \Cref{lem:DeGiorgi-type}, i.e.\ the desired generalization of the condition  \eqref{eq:DeGiorgi-type.alternative.II}, is given by the following lemma.
Recall that
\[
Q^{\nu_0}_\omega(R):=Q\left( \tfrac{\nu_0}{2}\omega^{1-m}R^2 , R \right).
\]
\begin{lemma}\label{lem:alternative.condition.refined}
	Let $\nu_0$ be given by \Cref{prop:DeGiorgi.first.alternative} and suppose \eqref{eq:DeGiorgi-type.alternative.II} holds.
	For any $\nu\in(0,1)$ there exists a $n_0>n_*$ such that
	\begin{equation*}
		\abs{\Qrho\cap \left[\solA>\mu_-+\omega-\tfrac{1}{2^{n_0}}\omega\right]} < \nu\abs{\Qrho}.
	\end{equation*}
\end{lemma}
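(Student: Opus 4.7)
The plan is to iterate the De Giorgi Poincar\'e-type inequality (\Cref{lem:Poincare-type.ineq.tracking.levelsets}) together with the interior energy estimate (\Cref{prop:interior.energy.estimate.upper.trunc}) along the geometric sequence of levels $k_j := \mu_-+\omega-\omega/2^j$, $j = n_*, n_*+1,\dots, n_0-1$, and to derive a recursion of the form $A_{j+1}^2 \leq C\,|\Qrho|\,(A_j - A_{j+1})$ for the quantities $A_j := |\Qrho \cap [\solA > k_j]|$. Summing this recursion telescopes the right-hand side and yields $A_{n_0} \leq C\,|\Qrho|/\sqrt{n_0 - n_*}$, so taking $n_0-n_* > C^2/\nu^2$ concludes.

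To carry this out, write $A_j(t) := |B_R \cap [\solA(\cdot,t)>k_j]|$ and $I := [\tfrac{\nu_0}{2}{\bar{t}_0}, 0]$. By \Cref{cor:Alternative.condition.time-interval}, $|B_R| - |A_j(t)| \geq (\nu_0/2)^2|B_R|$ for every $j\geq n_*$ and $t\in I$. Applying \Cref{lem:Poincare-type.ineq.tracking.levelsets} to $\solA(\cdot,t)$ at the consecutive levels $k_j<k_{j+1}$, noting $k_{j+1}-k_j=\omega/2^{j+1}$ and bounding $|A_{j+1}(t)|^{1-1/N} \geq |A_{j+1}(t)|\,|B_R|^{-1/N}$, then integrating in $t\in I$ and applying Cauchy--Schwarz gives
\[
\frac{\omega}{2^{j+1}}\,\frac{A_{j+1}}{|B_R|^{1/N}} \leq C(N,\nu_0)\Bigl(\iint_{\Qrho} |\nabla(\solA-k_j)_+|^2\Bigr)^{1/2}(A_j - A_{j+1})^{1/2}.
\]
Next, apply \Cref{prop:interior.energy.estimate.upper.trunc} with $k = k_j$ and a cut-off $\zeta\in C_c^\infty(B_{2R}\times({\bar{t}_0},\infty))$ equal to $1$ on $\Qrho$, with $|\nabla\zeta|\leq C/R$ and $|\zeta_t|\leq C\omega^{m-1}/R^2$. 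Combined with $\mu_+-k_j\leq\omega/2^j$, $\mu_+\leq\tfrac{5}{4}\omega$ and $k_j\geq\omega/2$, this bounds the gradient integral by $C\,(\omega/2^j)^2\,|Q_\omega(2R)|/R^2$. Substituting into the previous display and using that $|B_R|^{2/N}$ is proportional to $R^2$ produces the claimed recursion (the powers of $\omega$ and $R$ cancel perfectly). Summing from $j=n_*$ to $n_0-1$, using the monotonicity $A_{n_0}\leq A_{j+1}$ and the telescoping bound $\sum (A_j-A_{j+1})\leq|\Qrho|$, yields $(n_0-n_*)\,A_{n_0}^2 \leq C\,|\Qrho|^2$, which is exactly what is needed.

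The main technical obstacle is controlling the reaction-term contribution $C\,\LipBoundSource\,k_j^{-m_0}(\mu_+-k_j)\,|Q_\omega(2R)|$ appearing in the upper-truncation energy estimate. Relative to the degenerate-diffusion part this term scales like $\LipBoundSource\,R^2\omega^{-m-m_0}\,2^j$, which, thanks to \eqref{eq:oscillation.Really.Large} (so $\omega^m\geq R$) and $m_0<m$, is dominated by $\LipBoundSource\,R^{1-m_0/m}\,2^j$. To absorb this uniformly for $j\leq n_0$ one needs $R_{\mathrm{max}}^{1-m_0/m}\,2^{n_0}$ to be small, which may require shrinking $R_{\mathrm{max}}$ beyond the value fixed in \eqref{eq:define.Rmax}. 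Since the application of this lemma in the proof of the second alternative of \Cref{lem:DeGiorgi-type} fixes $\nu$ as a function of the structural constants alone, the resulting $n_0$ is itself a structural quantity, so this additional restriction on $R_{\mathrm{max}}$ is legitimate and the argument closes.
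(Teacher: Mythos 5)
Your proof follows essentially the same route as the paper's: De Giorgi's Poincar\'e-type inequality at the dyadic levels $k_j$, Cauchy--Schwarz against the upper-truncation energy estimate, and a telescoping sum over $j$ from $n_*$ to $n_0-1$; the only superficial difference is that you inline the gradient bound that the paper isolates as \Cref{lem:Gradient.Estimate.via.Energy.Ineq}. More importantly, the ``technical obstacle'' you call out is genuine and the paper's own argument does not resolve it: the reaction term in \Cref{prop:interior.energy.estimate.upper.trunc} contributes $\LipBoundSource k_j^{-m_0}(\mu_+-k_j)$, which is \emph{linear} in $(\mu_+-k_j)\leq\omega/2^j$ while the diffusion contributions are \emph{quadratic}, so after dividing by $c_1(\omega/2)^{m-1}$ it equals $C\LipBoundSource\omega^{2-m-m_0}/2^j$, which relative to the target $C\omega^2/(2^{2j}R^2)$ carries the extra factor $2^jR^2\omega^{-(m+m_0)}\leq 2^jR^{1-m_0/m}$. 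The bracket in \Cref{lem:Gradient.Estimate.via.Energy.Ineq} should therefore read $\bigl(1+2^jR^2/\omega^{m+m_0}\bigr)$, whereas the paper writes $\bigl(1+R^2/\omega^{m+m_0}\bigr)$ and disposes of it via \eqref{eq:oscillation.Really.Large} alone; as written that step silently discards the factor $2^j$. Your remedy, requiring $R_\mathrm{max}^{1-m_0/m}\,2^{n_0}\leq 1$, is correct and non-circular: in the only place \Cref{lem:alternative.condition.refined} is invoked (the proof of \Cref{prop:DiGiorgi.SecondAlternative}) the input $\nu$ is a structural constant, so the induced $n_0$, and hence the additionally shrunk $R_\mathrm{max}$, is itself structural. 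The ambient $R_\mathrm{max}$ in \eqref{eq:define.Rmax} already depends on $n_*$ in exactly the same way, so your strengthening is in the same spirit.
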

Before we prove \Cref{lem:alternative.condition.refined}, we need the following auxiliary lemma that allows us to estimate the gradient of the truncated solution.
It is an immediate consequence of the interior energy inequality given in \Cref{prop:interior.energy.estimate.upper.trunc}.
Here, the second inclusion of \eqref{eq:cond.intrinsic.scaled.cylinder.in.domain}, that is, $Q_\omega(2R)\subseteq\Omega_T$, is a key ingredient.

\begin{lemma}\label{lem:Gradient.Estimate.via.Energy.Ineq}
	Let $\nu_0\in(0,1)$, then we have the estimate
	\[
	\begin{aligned}
		\norm{\nabla(\solA-(\mu_-+(1-\tfrac{1}{2^n})\omega))_+}_{L^2(\Qrho}^2
		\leq C(N,c_1,c_2,m,\LipBoundSource,m_0)\ \frac{\omega^2}{2^{2n}R^2} \abs{Q^{\nu_0}_\omega(R)}.
	\end{aligned}
	\]
\end{lemma}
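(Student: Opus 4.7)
The plan is to invoke \Cref{prop:interior.energy.estimate.upper.trunc} directly, applied on the enlarged cylinder $Q_\omega(2R)$, which lies inside $\Omega_T$ by the second inclusion in \eqref{eq:cond.intrinsic.scaled.cylinder.in.domain}, with truncation level $k := \mu_- + (1-2^{-n})\omega$. I will choose a standard cutoff $\zeta \in C^\infty_c(B_{2R} \times (-4\omega^{1-m}R^2,\infty))$ with $0 \leq \zeta \leq 1$, $\zeta \equiv 1$ on $Q^{\nu_0}_\omega(R)$, $\abs{\nabla\zeta} \leq C/R$, and $\abs{\zeta_t} \leq C\omega^{m-1}/R^2$, so that the left-hand side of the energy estimate dominates $c_1 k^{m-1}\norm{\nabla(\solA-k)_+}_{L^2(Q^{\nu_0}_\omega(R))}^2$. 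After dividing through by $c_1 k^{m-1}$, the task reduces to estimating the three right-hand side terms of \Cref{prop:interior.energy.estimate.upper.trunc}.

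The numerical inputs controlling those terms are $\mu_+ - k \leq \omega/2^n$ (coming from \eqref{eq:cond.oscillation.recursion.hypothesis}), $\mu_+ \leq \tfrac{5}{4}\omega$ (coming from \eqref{eq:cond.inf.small}), and, for $n \geq 1$, the lower bound $k \geq \omega/2$, which provides both $k^{m-1} \geq C\omega^{m-1}$ and $k^{-m_0} \leq C\omega^{-m_0}$. Combining with the volume identity $|Q^{\nu_0}_\omega(R)| = \tfrac{\nu_0}{2}\abs{B_R}\omega^{1-m}R^2$ and the comparison $\abs{Q_\omega(2R)} \leq C|Q^{\nu_0}_\omega(R)|$, the time-derivative term and the cutoff-gradient term each yield a contribution of order $\omega^2\abs{B_R}/2^{2n}$, and after division by $c_1 k^{m-1}$ these match the target $C\omega^2/(2^{2n}R^2)|Q^{\nu_0}_\omega(R)|$ exactly.

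The main obstacle is the reaction contribution $\LipBoundSource k^{-m_0}(\mu_+-k)\iint\chi_{[\solA>k]}$, which carries only a single power of $(\mu_+-k)$ alongside the singular prefactor $k^{-m_0}$ and therefore does not a priori produce the full $(\omega/2^n)^2$ decay. My plan is to exploit the hypothesis \eqref{eq:oscillation.Really.Large} written as $\omega^m \geq R$, which together with the standing assumption $0 \leq m_0 < m$ yields the key scaling inequality $R^2 \omega^{-m-m_0} \leq R^{(m-m_0)/m}$; the smallness constraint $R \leq R_{\mathrm{max}}$ then produces a fixed upper bound depending only on the structural parameters via the explicit choice in \eqref{eq:define.Rmax}. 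Absorbing this factor into $C$ shows that the reaction contribution also fits inside the $C\omega^2\abs{B_R}/2^{2n}$ envelope, completing the proof. This is precisely the step where the newly introduced hypotheses \eqref{eq:oscillation.Really.Large} and $R \leq R_{\mathrm{max}}$ enter the argument, reflecting the role they were built for in \Cref{lem:DeGiorgi-type}.
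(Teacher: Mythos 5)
Your approach matches the paper's own proof essentially step for step: invoke \Cref{prop:interior.energy.estimate.upper.trunc} on an enlarged cylinder with truncation level $k=\mu_-+(1-2^{-n})\omega$, exploit $\mu_+-k\leq\omega/2^n$ (from \eqref{eq:cond.oscillation.recursion.hypothesis}), $k\geq\omega/2$ for $n\geq 1$, and $\mu_+\leq\tfrac{5}{4}\omega$ (from \eqref{eq:cond.inf.small}), and dispose of the reaction contribution via $R^2\omega^{-m-m_0}\leq R^{1-m_0/m}\leq 1$ from \eqref{eq:oscillation.Really.Large}. The only cosmetic difference is that you place the cutoff on $Q_\omega(2R)$ while the paper takes $Q^{\nu_0}_\omega(2R)$ (cutting $R\mapsto 2R$ and $\omega^{1-m}\mapsto\tfrac{\nu_0}{2}\omega^{1-m}$); both enlarged cylinders sit inside $\Omega_T$ via \eqref{eq:cond.intrinsic.scaled.cylinder.in.domain} and give cutoffs of the same order, so this is immaterial.

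One thing worth flagging, precisely because you noticed it yourself: you correctly observe that the reaction term contributes only a single factor $(\mu_+-k)\leq\omega/2^n$, not the squared factor that the time-derivative and gradient terms supply, so a priori it produces only $\omega^2/(2^nR^2)$ decay rather than $\omega^2/(2^{2n}R^2)$. Your proposed fix — $R^2\omega^{-m-m_0}\leq R^{1-m_0/m}$ followed by $R\leq R_{\mathrm{max}}$ — brings one structural factor down to size $O(1)$, but it does not by itself manufacture the missing $2^{-n}$: after dividing by $c_1k^{m-1}\sim\omega^{m-1}$ the reaction term is of order $\LipBoundSource\,\omega^{2-m-m_0}2^{-n}|Q^{\nu_0}_\omega(R)|$, and its ratio to the target $\omega^2/(2^{2n}R^2)\,|Q^{\nu_0}_\omega(R)|$ is $2^n R^2\omega^{-m-m_0}$, which retains a residual $2^n$. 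The paper's own computation absorbs this same $2^n$ silently — the displayed factor there is $(1+R^2\omega^{-m-m_0})$, also without an $n$-dependence — so your proposal reproduces the paper's bookkeeping faithfully; but if you want your argument to be airtight on its own terms, you should either track this $2^n$ explicitly (it is harmless downstream, being bounded by the fixed integer $2^{n_0}$ once $n_0$ is pinned in \Cref{lem:alternative.condition.refined}) or retain the slightly cruder $\omega^2/(2^nR^2)$ bound for the reaction piece and verify that the cancellation in \Cref{lem:alternative.condition.refined} still closes after adjusting $R_{\mathrm{max}}$.
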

\begin{proof}
	Fix $n\in\IN$.
	Without loss of generality we may assume that $\omega$ is small enough such that 
	\[
	k:=\mu_-+\left( 1-\frac{1}{2^n} \right) \omega<\mu_+.
	\] 
	Otherwise, the left-hand side of the estimate vanishes and the lemma holds trivially.
	It follows that
	\[
	(\mu_+-k)^2=(\mu_+-\mu_--(1-\frac{1}{2^{n}})\omega)^2\leq \frac{\omega^2}{2^{2n}}.
	\]
	We also have that $\frac{1}{2}\omega\leq k$ and $\mu_+\leq \frac{5}{4}\omega\leq 2\omega$.
	
	Let $\zeta$ be a smooth cut-off function corresponding to the inclusion $\Qrho\subset Q^{\nu_0}_\omega(2R)$, then
	\[
	\abs{\nabla\zeta}\leq\frac{4}{R},\quad 0\leq\zeta_t\leq\frac{8\omega^{m-1}}{\nu_0R^2}
	\]
	and $\zeta$ vanishes on the parabolic boundary of $Q^{\nu_0}_\omega(R)$.
	We use \Cref{prop:interior.energy.estimate.upper.trunc}, substituting $\omega^{1-m}$ by $\frac{\nu_0}{2}\omega^{1-m}$ and $R$ by $2R$
	to obtain an estimate of the $L^2$-norm of the gradient, namely
	\begin{align*}
		&c_1\left(\frac{\omega}{2}\right)^{m-1}\norm{\nabla(\solA-k)_+}_{L^2(\Qrho)}^2 \\
		&\quad \leq C \left(\frac{\omega^{m+1}}{\nu_02^{2n}R^2}+c_2\frac{(2\omega)^{m+1}}{2^{2n}R^2}+\LipBoundSource \left(\frac{\omega}{2}\right)^{-m_0} \frac{\omega}{2^{n}}\right) \iint_{Q^{\nu_0}_{\omega}(2R)}\chi_{[\solA>k]}.
	\end{align*}
	Multiplying the left- and right-hand side by $\omega^{1-m}$ and absorbing the constants into $C$ we obtain
	\[
	\norm{\nabla(\solA-k)_+}_{L^2(\Qrho)}^2\leq C(N,c_1,c_2,m,\LipBoundSource,m_0)\ \left(1+\frac{R^2}{\omega^{m+m_0}}\right)\frac{\omega^2}{2^{2n}R^2} \abs{Q^{\nu_0}_\omega(R)}.
	\]
	By \eqref{eq:oscillation.Really.Large} we know that $\omega^{-m-m_0}R^2\leq R^{1-\frac{m_0}{m}}\leq 1$, hence the estimate is proved.
\end{proof}

\begin{proof}[Proof of \Cref{lem:alternative.condition.refined}]
	We apply \Cref{lem:Poincare-type.ineq.tracking.levelsets} with $\solC=\solA(t)$, $t\in (-\tfrac{\nu_0}{2}\omega^{1-m}R^2,0)$,
	\[
	l=\mu_-+(1-\tfrac{1}{2^{n+1}})\omega \quad \text{and} \quad k=\mu_-+(1-\tfrac{1}{2^n})\omega,
	\]
	where $n>n_*$ is fixed and $n_*$ is given by \Cref{cor:Alternative.condition.time-interval}.
	We multiply the left- and right-hand side of the estimate by $\abs{{B_R}\cap[\solC>l]}^{\frac{1}{N}}\leq C(N) R$ and we bound the integral on the right-hand side of the resulting estimate by
	\begin{equation}\label{eq:alternative.condition.refined.Holder.ineq.result}
		\int_{{B_R}}\abs{\nabla(\solA(t)-k)_+}\chi_{[k\leq\solC<l]} \leq \left(\int_{{B_R}}\abs{\nabla(\solA(t)-k)_+}^2\chi_{[k\leq\solC<l]}\right)^{\frac{1}{2}}\abs{{B_R}\cap\left[k\leq\solA(t)\leq l\right]}^{\frac{1}{2}},
	\end{equation}
	which holds by H\"older's inequality.
	By \Cref{cor:Alternative.condition.time-interval} we know that
	\[
	\abs{{B_R}\cap\left[\solA(t)\leq k\right]}\geq C(N)\ \left(\tfrac{\nu_0}{2}\right)^2 R^N,
	\]
	hence $R^{N+1}{\abs{{B_R}\cap[\solA(t)\leq k]}}^{-1}\leq C(N)\ R{\nu_0}^{-2}$.
	Integrating the resulting inequality over $t\in (-\tfrac{\nu_0}{2}\omega^{1-m}R^2,0)$ yields
	\begin{equation*}
		\begin{aligned}
			&\frac{\omega}{2^{n}}\abs{\Qrho\cap\left[\solA>\mu_-+\left(1-\tfrac{1}{2^{n+1}}\right)\omega\right]}\\
			&\leq C(N)\ \frac{R}{{\nu_0}^2} \norm{\nabla(\solA-(\mu_-+(1-\tfrac{1}{2^n})\omega))_+}_{L^2(\Qrho)}\abs{\Qrho\cap\left[k\leq\solA<l\right]}^{\frac{1}{2}}.
		\end{aligned}
	\end{equation*}
	We multiply the estimate by $\left(\frac{\omega}{2^{n}}\right)^{-1}$, use the inequality in \Cref{lem:Gradient.Estimate.via.Energy.Ineq} and square the resulting estimate to obtain
	\begin{equation*}
		\begin{aligned}
			&\abs{\Qrho\cap\left[\solA>\mu_-+\left(1-\tfrac{1}{2^{n+1}}\right)\omega\right]}^2\\
			&\quad\leq  C(N,c_1,c_2,m,\LipBoundSource,m_0)\ \nu_0^{-2}\abs{Q^{\nu_0}_\omega(R)}\\
			&\qquad\cdot\abs{\Qrho\cap\left[\mu_-+\left(1-\tfrac{1}{2^{n}}\right)\omega\leq\solA<\mu_-+\left(1-\tfrac{1}{2^{n+1}}\right)\omega\right]}\\
		\end{aligned}
	\end{equation*}
	Sum this estimate over $n=n_*,n_*+1,\ldots, n_0-1$ for some $n_0\in\IN$ and observe that
	\[
	\sum_{n=n_*}^{n_0-1}\abs{\Qrho\cap\left[\mu_-+(1-\tfrac{1}{2^{n}})\omega\leq\solA<\mu_-+(1-\tfrac{1}{2^{n+1}})\omega\right]}\leq \abs{\Qrho},
	\]
	so that
	\begin{equation*}
		(n_0-n_*)\abs{\Qrho\cap\left[\solA>\mu_-+\left(1-\tfrac{1}{2^{n_0}}\right)\omega\right]}^2\leq C(N,c_1,c_2,m,\LipBoundSource,m_0)\ \nu_0^{-4}\abs{\Qrho}^2.
	\end{equation*}
	Pick $n_0$ large enough such that
	\[
	\frac{C(N,c_1,c_2,m,\LipBoundSource,m_0)}{(n_0-n_*)\nu_0^4}\leq \nu^2
	\]
	to finish the proof.
\end{proof}
Now we can prove the second alternative of \Cref{lem:DeGiorgi-type} in the same manner as the first alternative, i.e.\ \Cref{prop:DeGiorgi.first.alternative}.
\begin{proposition}\label{prop:DiGiorgi.SecondAlternative}
	Let $\nu_0$ be given by \Cref{prop:DeGiorgi.first.alternative} and suppose \eqref{eq:DeGiorgi-type.alternative.II} holds.
	Then there exists $n_0\in\IN$ depending on $N$, $c_1$, $c_2$, $m$ and $\LipBoundSource$ such that
	\[
	\solA< \mu_-+(1-\tfrac{1}{2^{n_0}})\omega
	\]
	a.e.\ in $Q^{\nu_0}_\omega\left(\tfrac{R}{2}\right)$.
\end{proposition}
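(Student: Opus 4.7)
The plan is to perform a De Giorgi iteration structurally identical to the one in the proof of \Cref{prop:DeGiorgi.first.alternative}, but now based on the upper-truncation energy estimate (\Cref{prop:interior.energy.estimate.upper.trunc}) and starting from the smallness of the super-level set provided by \Cref{lem:alternative.condition.refined}. Concretely, I would introduce shrinking cylinders $Q_n := Q^{\nu_0}_\omega(R_n)$ with $R_n := \tfrac{R}{2} + \tfrac{R}{2^{n+1}}$ and an increasing sequence of truncation levels
\[
k_n := \mu_- + \Bigl(1 - \tfrac{1}{2^{n_0+1}} - \tfrac{1}{2^{n_0+n+1}}\Bigr)\omega,
\]
so that $k_0 = \mu_- + (1 - 2^{-n_0})\omega$ matches the level appearing in \Cref{lem:alternative.condition.refined} and $k_n \nearrow k_\infty = \mu_- + (1 - 2^{-(n_0+1)})\omega$, which is strictly below $\mu_- + (1-2^{-n_0})\omega$. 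The integer $n_0$ is fixed at the very end, and the goal is to establish $y_n := \abs{Q_n \cap [\solA > k_n]}/\abs{Q_n} \to 0$, which implies the desired a.e.\ bound.

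The key steps, in order, are as follows. First, pick cutoffs $\zeta_n$ supported in $Q_n$, equal to $1$ on $Q_{n+1}$, with the standard bounds $\abs{\nabla \zeta_n} \leq C\, 2^n/R$ and $\abs{\zeta_{n,t}} \leq C\, 2^{2n} \omega^{m-1}/(\nu_0 R^2)$, and apply \Cref{prop:interior.energy.estimate.upper.trunc} with $k = k_n$; using $\mu_+ - k_n \leq 2^{1-n_0}\omega$, $\mu_+ \leq \tfrac{5}{4}\omega$, $k_n \geq \omega/2$ (valid for $n_0 \geq 1$), together with the inequality $R^2/\omega^{m+m_0} \leq R^{1-m_0/m} \leq 1$ (a consequence of \eqref{eq:oscillation.Really.Large} and $R \leq R_{\mathrm{max}} \leq 1$), bound each right-hand side term by a multiple of $\omega^{m+1}/R^2 \cdot \abs{Q_n \cap [\solA > k_n]}$ with explicit $n$- and $n_0$-dependence. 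Second, perform the change of variables $\tau = \omega^{m-1}t$ exactly as in the proof of \Cref{prop:DeGiorgi.first.alternative} to eliminate the $\omega^{m-1}$ ellipticity factor, converting the estimate into a uniform bound on $\|(\tilde{\solA} - k_n)_+ \tilde{\zeta}_n\|_{V^2(\tilde Q_n)}^2$ over the rescaled cylinder $\tilde Q_n := B_{R_n} \times (-\tfrac{\nu_0}{2}R_n^2, 0)$. Third, apply \Cref{lem:parab.space.embedding} to $(\tilde{\solA} - k_n)_+ \tilde{\zeta}_n \in V^2_0(\tilde Q_n)$ and combine with the elementary inequality $(k_{n+1}-k_n)^2 \abs{\tilde Q_{n+1} \cap [\tilde{\solA} > k_{n+1}]} \leq \|(\tilde{\solA} - k_n)_+ \tilde{\zeta}_n\|_{L^2(\tilde Q_n)}^2$, using $k_{n+1}-k_n = \omega/2^{n_0+n+2}$ and the volume ratio $\abs{\tilde Q_n}^{1+2/(N+2)}/\abs{\tilde Q_{n+1}} \leq C R^2$, to arrive at a recursion $y_{n+1} \leq C\, b^{n} y_n^{1+2/(N+2)}$. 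Fourth, invoke \Cref{lem:fast.geometric.convergence} to conclude $y_n \to 0$ whenever $y_0$ is smaller than an explicit threshold $\theta$, and finally apply \Cref{lem:alternative.condition.refined} with $\nu = \theta$ to pick $n_0$ large enough that this smallness holds.

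The main obstacle lies in step one, in the careful handling of the reaction contribution: since it enters the energy estimate only linearly in $\mu_+ - k_n$ (while the time- and space-derivative contributions enter quadratically), and since $\mu_+ - k_n$ is bounded below by a quantity of order $2^{-n_0}\omega$, the reaction term does not inherit the geometric decay in $n$ enjoyed by the other terms. A careful split of the resulting recursion into the regimes $n \leq n_0/2$ and $n \geq n_0/2$, together with the dominance of $4^n$ over $2^{n_0}$ past the half-way index, is needed to verify that the constants $C$ and $b$ in the final recursion depend only on the structural data $N, c_1, c_2, m, \LipBoundSource, m_0, \nu_0$ and not on $n_0$, so that the smallness threshold delivered by \Cref{lem:fast.geometric.convergence} remains compatible with the quantitative bound on $y_0$ from \Cref{lem:alternative.condition.refined}; this parallels the treatment of the reaction term in the proof of \Cref{prop:DeGiorgi.first.alternative}.
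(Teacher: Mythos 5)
Your overall plan—shrinking cylinders $Q_n=Q^{\nu_0}_\omega(R_n)$, the upper-truncation energy estimate (\Cref{prop:interior.energy.estimate.upper.trunc}), the embedding (\Cref{lem:parab.space.embedding}), fast geometric convergence (\Cref{lem:fast.geometric.convergence}), with $y_0$ controlled via \Cref{lem:alternative.condition.refined}—matches the paper's structure, and your choice of levels is in fact the consistent one: taking $k_0=\mu_-+(1-2^{-n_0})\omega$ is exactly the level at which \Cref{lem:alternative.condition.refined} yields smallness, whereas the $k_n$ printed in the paper starts at $\mu_-+(1-2^{-n_0})\omega-\tfrac{\omega}{2}$, a strictly lower level, so the super-level set $[\solA>k_0]$ there is \emph{larger} than the set the lemma controls and the claimed equality in the $y_0$ line does not hold. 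Your $k_n$ aligns the starting point correctly.

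The genuine gap, however, sits precisely where you flag it, and the proposed regime split does not close it. With your $k_n$ one has $k_{n+1}-k_n=\omega\,2^{-(n_0+n+2)}$, so $(k_{n+1}-k_n)^2=\omega^2\,4^{-(n_0+n+2)}$, while $(\mu_+-k_n)^2\leq\omega^2 4^{-n_0}$. In the time- and gradient contributions the $4^{-n_0}$ cancels against the $4^{-n_0-n}$ below, leaving the usual $16^n$; but the reaction contribution $\LipBoundSource k_n^{-m_0}(\mu_+-k_n)\leq C\,\omega^{1-m_0}2^{-n_0}$ is only \emph{linear} in $(\mu_+-k_n)$, and after dividing by $(k_{n+1}-k_n)^2$ and normalising by $\omega^{m-1}$ it contributes a factor on the order of $\LipBoundSource\,(R^2/\omega^{m+m_0})\,2^{n_0}4^{n}$. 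Even using the crude bound $R^2/\omega^{m+m_0}\leq 1$, the resulting recursion has the form $y_{n+1}\leq C\max\{16^n,\,2^{n_0}4^n\}\,y_n^{1+\frac{2}{N+2}}$, and there is no way to bound $\max\{16^n,2^{n_0}4^n\}\leq C'b^n$ with $C',b$ independent of $n_0$ (already at $n=0$ you would need $C'\geq 2^{n_0}$). Consequently the threshold $\theta$ produced by \Cref{lem:fast.geometric.convergence} decays at least like $2^{-n_0(N+2)/2}$, while \Cref{lem:alternative.condition.refined} (via $n_0-n_*\geq C\nu^{-2}\nu_0^{-4}$) only supplies $y_0\lesssim(n_0-n_*)^{-1/2}$, i.e.\ polynomial decay in $n_0$; the two requirements are incompatible, and no valid $n_0$ exists. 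Splitting at $n=n_0/2$ only isolates the regime in which the $n_0$-dependent constant sits, it does not remove it. What is actually needed is a sharper use of $R\leq R_{\mathrm{max}}$: from \eqref{eq:oscillation.Really.Large} and \eqref{eq:define.Rmax} one has $R^2/\omega^{m+m_0}\leq R_{\mathrm{max}}^{1-m_0/m}\sim(n_*-2)^2\,\nu_0^2\,2^{-n_*}$, which turns the unwanted $2^{n_0}$ into $2^{n_0-n_*}$—but since $n_0-n_*$ is itself chosen from $\nu$, which is chosen from the recursion constant, you must exhibit an explicit fixed point of the dependence chain $\nu\mapsto n_0\mapsto\text{recursion constant}\mapsto\theta\mapsto\nu$ before the argument is complete. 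Your proof proposal as written does not do this, so the step is not yet justified.
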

\begin{proof}
	Define the sequences
	\[
	R_n=\frac{R}{2}+\frac{R}{2^{n+1}},\quad k_n=\mu_-+(1-\frac{1}{2^{n_0}})\omega-\frac{\omega}{2^{n+1}},
	\]
	where $n_0$ is fixed and will be chosen later depending solely on $N$, $c_1$, $c_2$, $m$ and $\LipBoundSource$.
	Construct the family of nested and shrinking cylinders $Q_n=Q^{\nu_0}_\omega\left(R_n\right)$.
	Let $\zeta_n$ be the smooth parabolic cut-off function corresponding to the inclusion ${Q}_{n+1}\subseteq{Q}_{n}$ and note that
	\[
	\abs{\nabla\zeta_n}\leq \frac{2^{n+2}}{R},\quad 0\leq\zeta_t\leq C\ \omega^{m-1}\frac{2^{2n+2}}{\nu_0R^2}.
	\]
	Without loss of generality we may assume that 
	\[
	\mu_-+(1-\frac{1}{2^{n_0}})\omega \leq \mu_+.
	\]
	Otherwise \Cref{prop:DiGiorgi.SecondAlternative} holds trivially, since $\solA\leq\mu_+$ holds by definition.
	
	First, we observe that the following inequalities hold:
	\begin{gather*}
		\frac{\omega}{4}\leq k_n;\quad (\mu_+-k_n)^2= (\mu_+-\mu_--(1-\frac{1}{2^{n_0}}-\frac{1}{2^{n+1}})\omega)^2\leq (\mu_+-\mu_-)^2 \leq \omega^2;\\
		\mu_+\leq \frac{5}{4}\omega; \quad k_n^{-m_0}(\mu_+-k_n)\leq \left(\frac{\omega}{2}\right)^{-m_0} 2\omega.
	\end{gather*}
	We use \Cref{prop:interior.energy.estimate.upper.trunc} substituting $R$ by $R_n$, $k$ by $k_n$ and $\omega^{m-1}$ by $\tfrac{\nu_0}{2}\omega^{m-1}$ (recall that we set ${\bar{t}_0}=-\frac{\nu_0}{2}\omega^{1-m}\omega$) and we absorb all constants depending on $c_1$, $c_2$ and $m$ into $C$ to obtain
	\begin{equation*}
		\begin{aligned}
			&\norm{(\solA-k_n)_{+}\zeta_n}_{L^\infty({\bar{t}_0},{0};L^2(B_{R_{n+1}}))}^2+\omega^{m-1}\norm{\nabla(\solA-{k_n})_{+}\zeta_n}_{L^2({Q}_{n+1})}^2\\
			&\quad\leq C\ 2^{2n}\frac{\omega^{m+1}}{\nu_0R^2}\iint_{{Q}_{n}}\chi_{[\solA>k_n]}+C\ 2^{2n}\frac{\omega^{m+1}}{R^2}\iint_{{Q_n}}\chi_{[\solA>k_n]} \\
			&\qquad+C\ \LipBoundSource2^{-m_0}\frac{\omega^{m+1}}{R^2}\left(\frac{R^2}{\omega^{m+m_0}}\right)\iint_{Q_n}\chi_{[\solA>k_n]}\\
			&\quad\leq C(c_1,c_2,m,\LipBoundSource,m_0)\ 2^{2n}\frac{\omega^{m+1}}{R^2}\iint_{Q_n}\chi_{[\solA>k_n]},
		\end{aligned}
	\end{equation*}
	where we used that $\omega^{-m-m_0} R^2\leq R^{(m-m_0)/m}\leq 1$ by \eqref{eq:oscillation.Really.Large}.
	
	Introduce the change of variables
	\[
	\tau=\omega^{m-1}t,
	\]
	denote by $\tilde{Q}_n$ the transformed cylinder $B_{R_n}\times(-\frac{\nu_0}{2}R_n^2,0)$ and define the functions
	\[
	\solAvar(x,\tau)=\solA(x,\omega^{1-m}\tau),\quad\tilde{\zeta}_n(x,\tau)=\zeta_n(x,\omega^{1-m}\tau).
	\]
	The transformed estimate reads
	\begin{equation*}
		\begin{aligned}
			\norm{(\solAvar-k_n)_{+}\tilde{\zeta}_n}_{V^2(\tilde{Q}_{n+1})}^2\leq C(c_1,c_2,m,\LipBoundSource,m_0)\ 2^{2n}\frac{\omega^{2}}{R^2}\iint_{\tilde{Q}_{n}}\chi_{[\solAvar>k_n]}.
		\end{aligned}
	\end{equation*}
	We apply \Cref{lem:parab.space.embedding} to $(\solAvar-k_n)_{+}\tilde{\zeta}_n\in V^2_0$ and use the resulting estimate to conclude that
	\begin{align*}
		&\frac{\omega^2}{2^{2(n+1)}} |\tilde{Q}_{n+1}\cap[\solAvar>k_{n+1}] | = (k_{n+1}-k_n)^2|\tilde{Q}_{n+1}\cap[\solAvar>k_{n+1}] |\\
		&\quad\leq \norm{(\solAvar-k_n)_+\tilde{\zeta}_n}^2_{L^2(\tilde{Q}_n)}\leq C(N)\ |\tilde{Q}_{n}\cap[\solAvar>k_{n}] |^{\frac{2}{N+2}}\norm{(\solAvar-k_n)_+\tilde{\zeta}_n}^2_{V^2(\tilde{Q}_n)}\\
		&\quad\leq C(N,c_1,c_2,m,\LipBoundSource,m_0)\ 2^{2n}\frac{\omega^{2}}{R^2}\iint_{\tilde{Q}_{n}}\chi_{[\solAvar>k_n]}.
	\end{align*}
	
	Note that $|\tilde{Q}_n|= \abs{B_1}\frac{\nu_0}{2} R_n^{N+2}$, so
	\[
	\frac{|\tilde{Q}_n|^{1+\frac{2}{N+2}}}{|\tilde{Q}_{n+1}|}\leq C(N)\ \nu_0^{\frac{2}{N+2}}2^{N+2}R^2.
	\]
	Next, we set $y_n=\frac{\abs{\tilde{Q}_n\cap[\solAvar>k_n]}}{|\tilde{Q}_n|}$ and we conclude that
	\[
	y_{n+1}\leq C(N,c_1,c_2,m,\LipBoundSource,m_0)\ 2^{4n}y_n^{1+\frac{2}{N+2}}.
	\]
	By \Cref{lem:fast.geometric.convergence}, if
	\[
	y_0\leq C(N,c_1,c_2,m,\LipBoundSource,m_0)^{-\frac{N+2}{2}}(2^4)^{-\left(\frac{N+2}{2}\right)^2},
	\]
	then
	\[
	y_n\leq C(N,c_1,c_2,m,\LipBoundSource,m_0)^{-\frac{N+2}{2}}2^{-\left(\frac{N+2}{2}\right)^2} (2^4)^{-\frac{N+2}{2}n},
	\]
	so in this case $y_n\to 0$ as $n\to \infty$.
	
	Let us set
	\[
	\nu= C(N,c_1,c_2,m,\LipBoundSource,m_0)^{-\frac{N+2}{2}}(2^4)^{-\left(\frac{N+2}{2}\right)^2}
	\]
	and use \Cref{lem:alternative.condition.refined} to pick an $n_0$ corresponding to this $\nu$.
	Then we have that
	\[
	y_0=\frac{\abs{\tilde{Q}_0\cap[\solAvar>k_n]}}{|\tilde{Q}_0|}=\frac{\abs{Q^{\nu_0}_\omega(R)\cap[\solA>\mu_-+(1-\frac{1}{2^{n_0}})]}}{\abs{Q^{\nu_0}_\omega(R)}}\leq \nu
	\]
	by \Cref{lem:alternative.condition.refined}, so $y_n\to 0$ as $n\to \infty$.
	In particular, 
	\[
	\abs{Q_n\cap[\solA>\mu_-+(1-\frac{1}{2^{n_0}}-\frac{1}{2^{n+1}})\omega]}\to 0
	\]
	as $n\to\infty$, so $\abs{Q^{\nu_0}(\frac{R}{2})\cap[\solA\geq\mu_-+(1-\frac{1}{2^{n_0}})\omega]}=0$.
\end{proof}
The De Giorgi-type Lemma, that is, \Cref{lem:DeGiorgi-type}, follows by combining \Cref{prop:DeGiorgi.first.alternative,prop:DiGiorgi.SecondAlternative}.

\vspace*{0.2 cm}
\noindent\textbf{Declaration of competing interest.}
None.

\vspace*{0.2 cm}

\noindent\textbf{Data availability statement.}
My manuscript has no associated data.
	
\vspace*{0.2 cm}
	
\noindent\textbf{Acknowledgement.} 
The author thanks S.\ Sonner for introducing the topic and for the support and feedback during the writing process and N.\ Liao for the discussions on the regularity of Stefan problems.
	
\bibliographystyle{abbrv}
\bibliography{bibliography}
\end{document}